\newtheorem{thm}{Theorem}
\newtheorem{lem}[thm]{Lemma}
\newtheorem{prop}[thm]{Proposition}
\newtheorem{rem}[thm]{Remark}
\numberwithin{thm}{section}
\numberwithin{equation}{section}
\newcommand{\norm}[1]{\left\Vert#1\right\Vert}
\newcommand{\abs}[1]{\left\vert#1\right\vert}
\newcommand{\n}{\mathbb{N}}
\newcommand{\Hi}{\mathcal{H}}
\newcommand{\K}{\mathcal{K}}
\newcommand{\OTm}{\otimes_{\min}}
\begin{document}
\title{Tsirelson like operator spaces}
\author{Hun Hee Lee}
\address{Department of Mathematical Sciences, Seoul National University San56-1 Shinrim-dong Kwanak-gu Seoul 151-747, Korea}
\email{Lee.hunhee@gmail.com, bbking@amath.kaist.ac.kr}
\keywords{cotype, operator space, weak Hilbert space, Tsirelson's space}
\thanks{2000 \it{Mathematics Subject Classification}.
\rm{Primary 47L25, Secondary 46B07}}

\begin{abstract}
We construct nontrivial examples of weak-$C_p$ ($1\leq p \leq \infty$) operator spaces
with the local operator space structure very close to $C_p = [R, C]_{\frac{1}{p}}$.
These examples are non-homogeneous Hilbertian operator spaces,
and their constructions are similar to that of 2-convexified Tsirelson's space by W. B. Johnson.
\end{abstract}
\maketitle

\section{Introduction}

Tsirelson's space and its variations have been sources of counterexamples to many questions in the Banach space theory (See \cite{CS} for the details). 
In this paper we focus on the 2-convexified version of Tsirelson's space, which served as an example of nontrivial weak Hilbert space,
a closest object to Hilbert spaces in the sense of type and cotype theory.

Recall that a Banach space $X$ is called a weak Hilbert space (\cite{P0}) if for any $0<\delta <1$ there is a constant $C >0$ with the following property
: for any finite dimensional $F\subseteq X$ we can find $F_1 \subseteq F$ and an onto projection $P :X \rightarrow F_1$ satisfying
$$d_{F_1} := d(F_1, \ell^{\text{dim}F_1}_2) \leq C, \; \text{dim}F_1 \geq \delta \text{dim}F\,\, \text{and} \,\, \norm{P} \leq C,$$
where $d(\cdot,\cdot)$ is the Banach-Mazur distance defined by
$$d(X,Y) = \inf \{ \norm{u}\norm{u^{-1}} | u : X \rightarrow Y,\; \text{isomorphism} \}.$$

As an operator space analogue of weak Hilbert space the author introduced the notion of weak-$H$ spaces
for a (separable and infinite dimensional) perfectly Hilbertian operator space $H$ in \cite{L2}.
A Hilbertian operator space $H$ (i.e. $H$ is isometric to a Hilbert space) is called homogeneous if 
for every $u : H \rightarrow H$ we have $\norm{u}_{cb}  = \norm{u}$ and subquadratic if
for all orthogonal projections $\{P_i\}^n_{i=1}$ in $H$ with $I_H = P_1 + \cdots + P_n$ we have 
$$\norm{x}^2_{B(\ell_2)\otimes_{\min}H} \leq \sum^n_{i=1}\norm{I_{B(\ell_2)}\otimes P_i (x)}^2_{B(\ell_2)\otimes_{\min}H}$$
for any $x\in B(\ell_2)\otimes H$ (See p.82 of \cite{P2}), where $\otimes_{\min}$ is the injective tensor product of operator spaces.
A homogeneous Hilbertian operator space $H$ is called perfectly Hilbertian
if $H$ and $H^*$ is subquadratic. See \cite{L1} for the definition of weak-$H$ space and the related type and cotype notions of operator spaces.

In \cite{L2} it is shown that an operator space $E$ is a weak-$H$ space if and only if
for any $0<\delta <1$ there is a constant $C >0$ with the following property :
for any finite dimensional $F\subseteq E$ we can find $F_1 \subseteq F$ and an onto projection $P :E \rightarrow F_1$ satisfying
$$d^H_{F_1,cb} := d_{cb}(F_1, H_{\text{dim}F_1}) \leq C, \; \text{dim}F_1 \geq \delta \text{dim}F\,\, \text{and} \,\, \norm{P}_{cb} \leq C,$$
where $d_{cb}(\cdot,\cdot)$ is the cb-distance defined by
$$d_{cb}(E,F) = \inf \{ \norm{u}_{cb}\norm{u^{-1}}_{cb} | u : E \rightarrow F,\; \text{isomorphism} \}$$
and $H_n$ is the $n$-dimensional subspace of $H$.
Thus, we can say that weak-$H$ spaces have similar local operator space structure to $H$.

The aim of this paper is to construct nontrivial examples of weak-$H$ spaces (not completely isomorphic to $H$)
for $H = C_p,\, R_p$ and $1\leq p \leq \infty$,
where $C_p = [C, R]_{\frac{1}{p}}$ and $R_p = [R, C]_{\frac{1}{p}}$, interpolation spaces of the column and the row Hilbert space via complex method.
We will follow the approach of W. B. Johnson (and T. Figiel) to construct Hilbertian operator spaces $X_{C_p}$ and $X_{R_p}$ for $1\leq p \leq \infty$, Since $R^*_p = C_{p'}$ for $\frac{1}{p} + \frac{1}{p'} =1$ it is enough to consider $X_{C_p}$ and $X_{R_p}$ for $1\leq p \leq 2$.
Interestingly, the construction for the case $1\leq p<2$ and the case $p=2$ are different although many proofs are overlapping.

In section \ref{sec-prelim} we prepare some back ground materials concerning vector valued Schatten classes and a description of $d^H_{E,cb}$.
In section \ref{Cp} we focus on $1\leq p <2$ case.
In section \ref{subsec-const-basis-Cp} we will construct $X_{C_p}$ (resp. $X_{R_p}$) and investigate the behavior of its canonical basis.
It will be shown that the span of certain block sequences of the canonical basis
is completely isomorphic to $C_p$ (resp. $R_p$) of the same dimension with bounded constants.
In section \ref{subsec-weak-Cp} we will examine that $X_{C_p}$ (resp. $X_{R_p}$) is our desired space.
First, we show that $X_{C_p}$ (resp. $X_{R_p}$) is a weak-$C_p$ (resp. weak-$R_p$).
We prepare additional materials concerning $\pi_{2,H}$-norms, operator space analogue of absolutely 2-summing norm,
and related description of $d^H_{E,cb}$.
Secondly, it will be shown that $X_{C_p}$ (resp. $X_{R_p}$) is not completely isomorphic to $C_p$ (resp. $R_p$)
investigating containment of an isomorphic copy of $c_0$ (the Banach space of all sequences vanishing at infinity) as in chapter 13 of \cite{P1}.

In section \ref{OH} we consider $p=2$ case. Most of arguments from section \ref{Cp} are still available with some exceptions.
In the final remark we construct a non-Hilbertian example of weak-$OH$ space.  

Throughout this paper, we assume that the reader is familiar with basic concepts in Banach spaces (\cite{DJT,P1,TJ}) and operator spaces (\cite{ER,P3}).
In this paper $E$ and $H$ will be reserved for an operator space and a separable, infinite dimensional and perfectly Hilbertian operator space.
Note that $H(I)$ is well-defined for any index set $I$ (\cite{P1.5}). We will simply write $H_n$ when $I = \{1, \cdots, n\}$.
As usual, $B(E, F)$ and $CB(E, F)$ denote the set of all bounded linear maps and all cb-maps from $E$ into $F$, respectively.

$\K$ implies the algebra of compact operators on $\ell_2$ and $\K_0$ is the union of the increasing sequence
$M_1\subseteq \cdots \subseteq M_n \subseteq M_{n+1}\subseteq \cdots$ of matrix algebras. Note that $\K = \overline{\K_{0}}$.

\section{Preiminaries}\label{sec-prelim}
In this section we collect some back ground materials which will be used later.

\subsection{Vector valued Schatten classes}
Let $S_p(\Hi)$ be the Schatten class on a Hilbert space $\Hi$ and $1\leq p \leq \infty$.
In \cite{P2} $S_p(\Hi, E)$, $E$-valued Schatten classes are defined by
$$S_p(\Hi; E) := [S_{\infty}(\Hi)\otimes_{\min} E, S_1(\Hi) \widehat{\otimes} E]_{\frac{1}{p}},$$
where $\otimes_{\min}$ and $\widehat{\otimes}$ refer to injective and projective tensor products of operator spaces.
When $\Hi = \ell_2$ we simply write as $S_p$ and $S_p(E)$.

The above vector valued Schatten classes are useful to describe operator space structure of subspaces of $S_p$.
In particular, the operator space structure of $C_p$, $R_p$ and $\ell_p$ ($c_0$ when $p = \infty$) can be described as follows.
$$\norm{\sum_i x_i \otimes e_i}_{S_p(C_p)} = \norm{\Big(\sum_i x^*_ix_i\Big)^{\frac{1}{2}}}_{S_p},$$
$$\norm{\sum_i x_i \otimes e_i}_{S_p(R_p)} = \norm{\Big(\sum_i x_ix^*_i\Big)^{\frac{1}{2}}}_{S_p}$$
and $$\norm{\sum_i x_i \otimes e_i}_{S_p(\ell_p)} = \Big(\sum_i\norm{x_i}^p_{S_p}\Big)^{\frac{1}{p}}.$$

When we are dealing with $E+_p F$, the sum of operator spaces in the sense of $\ell_p$,
it is more appropriate to use vector valued Schatten classes.
See chapter 2 of \cite{P2} for the definition of $E \oplus_p F$, the direct sum of operator spaces in the sense of $\ell_p$.
Then we define $E+_p F := (E \oplus_p F) / (E\cap F)$. Then we have 
\begin{equation}\label{Sp-sum}
S_p(E \oplus_p F) = S_p(E) \oplus_p S_p(F) \;\, \text{and}\;\, S_p(E +_p F) = S_p(E) +_p S_p(F).
\end{equation}
 
Moreover, we can use vector valued Schatten classes to check complete boundedness.
Indeed, by Lemma 1.7. of \cite{P2} for any cb-map $T:E\rightarrow F$ between operator spaces we have
\begin{equation}\label{Sp-cb-check}
\norm{T}_{cb} = \sup_{n\geq 1} \norm{I_{S^n_p}\otimes T : S^n_p(E) \rightarrow S^n_p(F)}.
\end{equation}

For a subspace $S \subseteq S_p$ we denote $S(E) := \overline{\text{span}}\{S\otimes E\} \subseteq S_p(E)$.
We will focus on the case $S = C_p$, $R_p$ and $\ell_p$ ($c_0$ when $p = \infty$).
By Theorem 1.1. of \cite{P2} we have $$S_p(E) \cong C_p \otimes_h E \otimes_h R_p$$
completely isometrically by the identification $e_{ij} \otimes x \mapsto e_{i1}\otimes x \otimes e_{1j}$,
so that $C_p(E)$ and $R_p(E)$ are 1-completely complemented in $S_p(E)$.

By taking the natural diagonal projection $\ell_p(E)$ is also 1-completely complemented in $S_p(E)$,
and we have $$\ell_p(E) = [c_0 \otimes_{\min} E, \ell_1 \widehat{\otimes} E]_{\frac{1}{p}}.$$

By the above observations for any cb-map $T:E\rightarrow F$ between operator spaces we have
\begin{equation}\label{cb-extension}
\norm{I_S \otimes T : S(E) \rightarrow S(F)}_{cb} = \norm{T}_{cb}
\end{equation}
for $S=C_p$, $R_p$ and $\ell_p$ ($c_0$ when $p = \infty$).

\subsection{A description of $d^H_{E,cb}$ and operator spaces with similar $n$-dimensional structure to $H$}

In this section we present several observation concerning $(2,H)$-summing maps, operator space analogues of 2-summing maps,
and related description of $d^H_{E,cb}$.

For a linear map $T:E\rightarrow F$ between operator maps the $(2,H)$-summing norm $\pi_{2,H}(T)$ is defined by
$$\pi_{2,H}(T) := \sup\Big\{ \frac{(\sum_k \norm{TSe_k}^2)^{\frac{1}{2}}}{\norm{S : H^* \rightarrow E}_{cb}} \Big\}.$$
We need the subquadracity of $H$ to ensure that $\pi_{2,H}(\cdot)$ is a norm.

The factorization norm through $H$ of $T$, $\gamma_{H}(T)$, is defined by $$\gamma_{H}(T) := \inf \{ \norm{T_1}_{cb}\norm{T_2}_{cb}\},$$
where the infimum runs over all possible factorization $$T : E \stackrel{T_1}{\longrightarrow} H^*(I) \stackrel{T_2}{\longrightarrow} F$$
for some index set $I$. Note that the trace dual $\gamma^*_{H}$ of $\gamma_{H}$ can be described as follows (Theorem 6.1. of \cite{P1.5}).
For any finite rank map $T:E\rightarrow F$ we have $$\gamma^*_{H}(T) = \inf \{ \pi_{2,H^*}(T_1)\pi_{2,H}(T^*_2)\},$$
where the infimum runs over all possible factorization $$T : E \stackrel{T_1}{\longrightarrow} \ell^m_2 \stackrel{T_2}{\longrightarrow} F$$
for some $m\in \n$.

We need to consider completely nuclear norm $\nu^o(\cdot)$ (\cite{ER}), which is the trace dual of cb-norm $\norm{\cdot}_{cb}$.
By arguments in p.200-201 of \cite{ER} we have
\begin{equation}\label{trace-nuc}
\abs{\text{tr}(T)} \leq \nu^o(T)
\end{equation}
for all finite rank map $T : E\rightarrow E$.

The following Lemma is an operator space version of the fact that the composition of two 2-summing maps is a nuclear map in some special cases.
\begin{lem}\label{lem-summing-composition}
Let $H$ be a subquadratic, homogeneous and Hilbertian operator space and $E$ be a finite dimensional operator space.
Then for any $u : E\rightarrow E$ we have
$$\nu^o(u) \leq \inf\{\pi_{2,H}(S)\pi^*_{2,H}(T) \},$$ where the infimum runs over all possible factorization
$$u: E \stackrel{S}{\rightarrow} \ell^n_2 \stackrel{T}{\rightarrow}E.$$ 
\end{lem}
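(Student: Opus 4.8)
The plan is to derive the estimate from the two trace-duality identities already available: $\nu^o$ is the trace dual of $\norm{\cdot}_{cb}$, and $\pi^*_{2,H}$ is the trace dual of $\pi_{2,H}$. Accordingly, I would fix an arbitrary factorization $u: E \stackrel{S}{\longrightarrow} \ell^n_2 \stackrel{T}{\longrightarrow} E$; it then suffices to prove $\nu^o(u) \leq \pi_{2,H}(S)\,\pi^*_{2,H}(T)$ and pass to the infimum over all such factorizations.

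Since $E$ is finite dimensional and $\nu^o$ is the trace dual of $\norm{\cdot}_{cb}$, we have the exact identity
$$\nu^o(u) = \sup\{ \abs{\text{tr}(uw)} : w:E\to E,\ \norm{w}_{cb}\leq 1 \}.$$
Fix such a $w$, write $u = TS$, and use the cyclicity of the trace to rewrite
$$\text{tr}(uw) = \text{tr}(TSw) = \text{tr}((Sw)T),$$
thereby converting the endomorphism $TSw$ of $E$ into the endomorphism $(Sw)T$ of $\ell^n_2$, with $Sw : E \to \ell^n_2$ and $T : \ell^n_2 \to E$ now composed in the order needed for the $\pi_{2,H}$/$\pi^*_{2,H}$ pairing.

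Applying the trace duality defining $\pi^*_{2,H}$ gives
$$\abs{\text{tr}((Sw)T)} \leq \pi_{2,H}(Sw)\,\pi^*_{2,H}(T),$$
and it remains only to absorb $w$. This is the ideal property of the $(2,H)$-summing norm under cb precomposition: for any test map $R : H^* \to E$ one has $\norm{wR}_{cb} \leq \norm{w}_{cb}\norm{R}_{cb}$, so substituting $wR$ into the definition of $\pi_{2,H}(S)$ yields $\pi_{2,H}(Sw) \leq \pi_{2,H}(S)\norm{w}_{cb} \leq \pi_{2,H}(S)$. Combining the three displays gives $\abs{\text{tr}(uw)} \leq \pi_{2,H}(S)\pi^*_{2,H}(T)$ for every admissible $w$; taking the supremum over $w$ and then the infimum over factorizations finishes the argument.

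The proof is thus essentially a bookkeeping of the duality pairings, and the only points genuinely requiring care are that the two trace dualities are applied with the maps landing in the correct spaces (handled by the cyclic rewriting above) and that $\pi_{2,H}$ really is an operator space ideal norm on the cb side. The standing hypotheses that $H$ is subquadratic and homogeneous are precisely what guarantee that $\pi_{2,H}$ is a norm admitting a well-behaved trace dual $\pi^*_{2,H}$, while the finite dimensionality of $E$ is what upgrades the trace duality for $\nu^o$ from a one-sided estimate to the equality used above.
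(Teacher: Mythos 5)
Your proof is correct, and it reaches the conclusion by a genuinely different route from the paper's. The paper fixes a further factorization $T:\ell^n_2\stackrel{A}{\to}H^*\stackrel{B}{\to}E$, uses cyclicity to write $\text{tr}(vu)=\text{tr}(SvBA)$, bounds this by $\norm{A}_{HS}\norm{SvB}_{HS}$, identifies $\norm{SvB}_{HS}$ with $\pi_{2,H}(SvB)$ via homogeneity of $H$, applies the ideal property, and only then takes the infimum over $A,B$ --- a step that implicitly invokes the nontrivial factorization description of the dual norm, namely that $\inf\norm{A}_{HS}\norm{B}_{cb}$ over such factorizations of $T$ is dominated by $\pi^*_{2,H}(T)$. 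You instead pair $T$ directly against $W=Sw$ through the definitional trace-duality inequality $\abs{\text{tr}(TW)}\leq \pi^*_{2,H}(T)\,\pi_{2,H}(W)$ and then absorb $w$ by the right-ideal property $\pi_{2,H}(Sw)\leq\pi_{2,H}(S)\norm{w}_{cb}$, which is immediate from the definition of $\pi_{2,H}$. Your argument is shorter and needs strictly less machinery: beyond the finite-dimensional duality $\nu^o(u)=\sup\{\abs{\text{tr}(uw)}:\norm{w}_{cb}\leq 1\}$ (which the paper's proof also requires in order to bound $\nu^o$ from above), you use only that $\pi_{2,H}$ is a norm (subquadraticity) so that its trace dual is defined, and homogeneity plays no explicit role. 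What the paper's computation buys in exchange is an explicit, factorization-level derivation that shows where the Hilbert--Schmidt norm and the homogeneity of $H$ enter. The one point to be explicit about in your write-up is that $\pi^*_{2,H}$ is being read as the trace dual of $\pi_{2,H}$ (as the notation and its use in Lemma \ref{lem-description} indicate); if one instead took the factorization quantity as the definition, your one-line duality step would have to be replaced by precisely the computation the paper performs.
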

\begin{proof}
Let $u : E\rightarrow E$ and consider any factorization $u: E \stackrel{S}{\rightarrow} \ell^n_2 \stackrel{T}{\rightarrow}E.$
Then for any $v : E\rightarrow E$ and any further factorization $T :\ell^n_2 \stackrel{A}{\rightarrow} H^* \stackrel{B}{\rightarrow}E$ we have
\begin{align*}
\abs{\text{tr}(vu)} & = \abs{\text{tr}(vBAS)} = \abs{\text{tr}(SvBA)} \leq \norm{A}_{HS}\norm{SvB}_{HS}\\
& = \norm{A}_{HS} \pi_{2,H}(SvB) \leq \norm{A}_{HS}\norm{B}_{cb}\norm{v}_{cb}\pi_{2,H}(S).
\end{align*}
By taking infimum over all possible $A$, $B$, $S$ and $v$ with $\norm{v}_{cb}\leq 1$ we get the desired result using trace duality.
\end{proof}

Now we present a description of $d^H_{E,cb}$ using $(2,H)$-summing norms.
\begin{lem}\label{lem-description}
Let $H$ be a perfectly Hilbertian operator space.
Then we have $$d^H_{E,cb} = \sup\Big\{\frac{\pi^*_{2,H^*}(u)}{\pi_{2,H}(u^*)} \; | \; u: \ell^m_2 \rightarrow E,\; m\in \n\Big\}.$$ 
\end{lem}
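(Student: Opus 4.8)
The plan is to route the proof through the factorization description $d^{H}_{E,cb}=\gamma_{H^*}(I_E)$ and then dualise. First I would establish this reduction. A factorization $I_E=T_2T_1$ through $(H^*)^*(I)=H(I)$ forces $T_1(E)$ to be an $n$-dimensional subspace of $H(I)$ (with $n=\dim E$), which by homogeneity of $H$ is completely isometric to $H_n$; hence factorizations of $I_E$ through $H(I)$ correspond exactly to isomorphisms $E\to H_n$, giving $\gamma_{H^*}(I_E)=d_{cb}(E,H_n)=d^{H}_{E,cb}$. With this in hand the asserted identity becomes a trace-duality statement, and I would prove it as two inequalities.

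For $\sup_u \pi^*_{2,H^*}(u)/\pi_{2,H}(u^*)\le d^{H}_{E,cb}$ I would argue directly. Fix an isomorphism $v:E\to H_n$ and $u:\ell^m_2\to E$, and write $u=v^{-1}(vu)$. Both $\pi^*_{2,H^*}$ and $\pi_{2,H}$ are ideal norms, so postcomposing by $v^{-1}$ and precomposing by $v^*$ cost only $\norm{v^{-1}}_{cb}$ and $\norm{v}_{cb}$. The key point is that $vu:\ell^m_2\to H_n$ is a map into a Hilbertian space: using subquadraticity and homogeneity of $H$ (so that $\norm{S}_{cb}=\norm{S}$ for the endomorphisms occurring in the definitions) one checks $\pi^*_{2,H^*}(vu)=\pi_{2,H}((vu)^*)$, both reducing to one Hilbert--Schmidt quantity. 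Chaining these yields $\pi^*_{2,H^*}(u)\le\norm{v^{-1}}_{cb}\norm{v}_{cb}\,\pi_{2,H}(u^*)$, and taking the infimum over $v$ and the supremum over $u$ gives the bound.

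The reverse inequality $d^{H}_{E,cb}\le\sup_u \pi^*_{2,H^*}(u)/\pi_{2,H}(u^*)$ is the heart of the matter. Here I would dualise: $\gamma_{H^*}(I_E)=\sup\{\,\abs{\text{tr}\,w}:\gamma^*_{H^*}(w)\le1\,\}$ over finite-rank $w:E\to E$. Given such $w$, the recalled description of $\gamma^*_{H^*}$ produces a factorization $w:E\xrightarrow{T_1}\ell^m_2\xrightarrow{T_2}E$ for which the relevant product of summing norms is at most $1+\eps$. Then \eqref{trace-nuc} together with Lemma~\ref{lem-summing-composition} gives $\abs{\text{tr}\,w}\le\nu^o(w)\le\pi_{2,H}(T_1)\,\pi^*_{2,H}(T_2)$, and it remains to control the free factor $\pi^*_{2,H}(T_2)$ by the supremum in the statement, applied with $u=T_2$. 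The main obstacle is precisely this last matching: the factorization of $\gamma^*_{H^*}$ naturally delivers $\pi_{2,H}(T_1)$ and $\pi_{2,H^*}(T_2^*)$, whereas the trace estimate delivers $\pi^*_{2,H}(T_2)$ and the statement is phrased with $\pi^*_{2,H^*}$; one must line up the $H$ versus $H^*$ labels on the four summing norms so that the product telescopes into (the ratio in the statement)$\times$(a quantity $\le1$), rather than into the analogous ratio for $H^*$. This is exactly where perfect Hilbertianity — subquadraticity of both $H$ and $H^*$ — is indispensable, since it is the self-dual behaviour of the $(2,H)$- and $(2,H^*)$-summing norms on maps carrying a Hilbertian factor that forces these two a priori distinct suprema to coincide. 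Carrying out this identification carefully is the step I expect to require the most work.
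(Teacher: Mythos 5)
Your reduction $d^H_{E,cb}=\gamma_{H^*}(I_E)$ and your proof of the inequality $\sup_u \pi^*_{2,H^*}(u)/\pi_{2,H}(u^*)\leq d^H_{E,cb}$ are sound: for $u:\ell^m_2\to E$ and an isomorphism $v:E\to H_n$ one does get $\pi^*_{2,H^*}(vu)=\pi_{2,H}((vu)^*)=\norm{vu}_{HS}$ from homogeneity of $H$ and $H^*$, and the ideal properties then give $\pi^*_{2,H^*}(u)\leq\norm{v}_{cb}\norm{v^{-1}}_{cb}\,\pi_{2,H}(u^*)$. Note that this is already a route entirely different from the paper's, whose whole proof is to quote Theorem 4.3 of \cite{L1} together with the sandwich $\pi_{2,H}(u^*)\leq\ell(u)\leq\pi^*_{2,H^*}(u)$.

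The upper bound $d^H_{E,cb}\leq\sup_u\pi^*_{2,H^*}(u)/\pi_{2,H}(u^*)=:K$, however, is left with a genuine gap, and the repair you sketch cannot work. As you note, trace duality hands you $w=T_2T_1$ with $\pi_{2,H}(T_1)\pi_{2,H^*}(T_2^*)\leq 1+\eps$, while Lemma \ref{lem-summing-composition} and \eqref{trace-nuc} give $\abs{\mathrm{tr}(w)}\leq\pi_{2,H}(T_1)\pi^*_{2,H}(T_2)$, so you need $\pi^*_{2,H}(T_2)\leq K\,\pi_{2,H^*}(T_2^*)$; but the definition of $K$ only yields the $H\leftrightarrow H^*$-swapped inequality $\pi^*_{2,H^*}(T_2)\leq K\,\pi_{2,H}(T_2^*)$. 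Your proposed fix is that perfect Hilbertianity forces the two suprema $\sup_u\pi^*_{2,H^*}(u)/\pi_{2,H}(u^*)$ and $\sup_u\pi^*_{2,H}(u)/\pi_{2,H^*}(u^*)$ to coincide. They do not: if the lemma is true they equal $d^H_{E,cb}$ and $d^{H^*}_{E,cb}$ respectively, and these differ for any non-self-dual $H$. Concretely, take $H=C$ and $E=C_n$; the same homogeneity computations as in your easy direction give $\pi_{2,C}(u^*)=\pi^*_{2,R}(u)=\norm{u}_{HS}$, so the first supremum is $1$, whereas $\pi_{2,R}(u^*)=\norm{u}$ and $\pi^*_{2,C}(u)=\norm{u}_{S_1}$, so the second supremum is $n$. (Duality in fact relates the supremum for the pair $(H,E)$ to that for $(H^*,E^*)$, not $(H^*,E)$.) So the step you defer as "requiring the most work" is not a technical identification but a false statement, and a genuinely new ingredient is needed to close the upper bound --- which is exactly what the paper imports from Theorem 4.3 of \cite{L1} via the intermediate quantity $\ell(u)$.
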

\begin{proof}
We get the result form Theorem 4.3. of \cite{L1} and the fact that
$$\pi_{2,H}(u^*) \leq \ell(u) \leq \pi^*_{2,H^*}(u)$$ for any $u: \ell^m_2 \rightarrow F$.
\end{proof}

We show an operator space version of Remark 13.4. of \cite{P1.5}, which will be useful later.
\begin{prop}\label{prop-complemented}
Let $H$ be a perfectly Hilbertian operator space and $n\in \mathbb{N}$ be fixed.
Suppose that $E$ is an operator space satisfying the following :
there is a constant $C > 0$ such that for any $n$-dimensional subspace $F \subseteq E$ we have $$d^H_{F,cb}\leq C.$$
Then for any $n$-dimensional subspace $F \subseteq E$ we have a projection $P : E\rightarrow E$ onto $F$ with $$\gamma_H(P)\leq C.$$
\end{prop}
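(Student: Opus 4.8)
The plan is to pass, via trace duality, from the construction of a good projection to a single trace inequality, and then to verify that inequality with the two lemmas already at hand. Write $i:F\hookrightarrow E$ for the inclusion (a complete isometry). A projection of $E$ onto $F$ is exactly a finite-rank map of the form $ir$ with $r:E\to F$ and $ri=\text{id}_F$, so these form an affine subset $iq_0+V$ of the finite-rank operators on $E$, where $q_0$ gives a fixed base projection and $V=\{is : s:E\to F,\ si=0\}$ is the space of admissible directions. Since $\gamma_H$ and $\gamma^*_H$ are in trace duality, a Hahn--Banach separation applied to the convex body $\{\gamma_H\leq C\}$ and this affine set shows that
$$\inf\{\gamma_H(P) : P \text{ a projection of } E \text{ onto } F\}=\sup\big\{\text{Re}\,\text{tr}(b) : \gamma^*_H(w)\leq 1,\ w(F)\subseteq F\big\},$$
where for $w$ with $w(F)\subseteq F$ I write $b:=w|_F:F\to F$. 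Indeed the separating functional $w$ must annihilate $V$, and $\text{tr}(is\,w)=0$ for all $s$ with $si=0$ forces $w(F)\subseteq F$, while its value at the base point $iq_0$ is $\text{tr}(q_0wi)=\text{tr}(b)$ because $wi=ib$ and $q_0i=\text{id}_F$. Thus it suffices to prove that $\abs{\text{tr}(b)}\leq C$ whenever $\gamma^*_H(w)\leq 1$ and $w(F)\subseteq F$; the infimum is then attained by a standard compactness argument, yielding a projection $P$ onto $F$ with $\gamma_H(P)\leq C$.

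For the key estimate, start from $\abs{\text{tr}(b)}\leq\nu^o(b)$, which is \eqref{trace-nuc} applied to $b:F\to F$. Next choose a factorization $w=w_2w_1$ through some $\ell^m_2$ that nearly realizes $\gamma^*_H(w)=\inf\pi_{2,H^*}(w_1)\pi_{2,H}(w^*_2)$. Restricting to $F$ produces a factorization $b=TS$ of $b$ through $\ell^m_2$, with $S:=w_1i:F\to\ell^m_2$ and $T:\ell^m_2\to F$ the corestriction of $w_2$ (legitimate since $w_2w_1(F)\subseteq F$). Lemma \ref{lem-summing-composition}, with $E$ there taken to be $F$, then gives $\nu^o(b)\leq\pi_{2,H}(S)\,\pi^*_{2,H}(T)$.

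It remains to compare $\pi_{2,H}(S)\,\pi^*_{2,H}(T)$ with the quantity $\pi_{2,H^*}(w_1)\,\pi_{2,H}(w^*_2)$ controlling $\gamma^*_H(w)$, and this is exactly where the hypothesis enters. The two factorizations carry summing norms built from $H$ and from its dual $H^*$, and the asymmetry between them is precisely what Lemma \ref{lem-description} measures as $d^H_{F,cb}$: using that description together with the trace duality between $\pi_{2,H}$ and $\pi^*_{2,H}$, one converts the mismatched leg and obtains $\pi_{2,H}(S)\,\pi^*_{2,H}(T)\leq d^H_{F,cb}\,\pi_{2,H^*}(w_1)\,\pi_{2,H}(w^*_2)$. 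Letting the factorization of $w$ tend to the optimum and invoking $d^H_{F,cb}\leq C$ yields $\abs{\text{tr}(b)}\leq C\,\gamma^*_H(w)\leq C$, which closes the argument. I expect the last paragraph to be the real obstacle: the bookkeeping of the summing norms with the correct Hilbert space ($H$ versus $H^*$) and their trace duals, the clean handling of the corestriction $T$, and above all making sure $d^H_{F,cb}$ is deployed exactly once so that the final constant is $C$ rather than $C^2$. A secondary technical point to justify carefully is the separation step in the first paragraph, since $E$ may be infinite dimensional and one should work with finite-rank operators (or finite-dimensional compressions) to ensure $\gamma^*_H$ is genuinely the dual norm and that the extremal projection exists.
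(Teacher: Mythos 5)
Your overall architecture coincides with the paper's: reduce by trace duality to the trace inequality $\abs{\text{tr}(b)}\leq C\,\gamma^*_H(w)$, and prove that inequality by combining \eqref{trace-nuc}, Lemma \ref{lem-summing-composition} and Lemma \ref{lem-description}. (The paper runs the duality in the opposite direction: it shows $\abs{\text{tr}(u)}\leq C\,\gamma^*_H(iu)$ for every $u:F\to F$ and applies Hahn--Banach to extend the functional $iu\mapsto\text{tr}(u)$ from the subspace $\{iu\}$ of the finite rank maps $F\to E$ normed by $\gamma^*_H$; by trace duality the extension is a map $q:E\to F$ with $\gamma_H(q)\leq C$ and $qi=I_F$, and $P=iq$ is the desired projection. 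This sidesteps the separation and attainment issues you flag at the end, so I would adopt that formulation.)

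The genuine gap is exactly where you predicted: the ``conversion of the mismatched leg.'' Applying Lemma \ref{lem-summing-composition} as stated gives $\nu^o(b)\leq\pi_{2,H}(S)\,\pi^*_{2,H}(T)$, and you then assert $\pi_{2,H}(S)\,\pi^*_{2,H}(T)\leq d^H_{F,cb}\,\pi_{2,H^*}(w_1)\,\pi_{2,H}(w_2^*)$ with no argument. The only conversion tool at hand, Lemma \ref{lem-description}, converts $\pi^*_{2,H^*}(v)$ into $\pi_{2,H}(v^*)$ for $v:\ell^m_2\to F$ at cost $d^H_{F,cb}$; to convert your leg $\pi^*_{2,H}(T)$ you would need the same statement with $H$ replaced by $H^*$, whose cost is $d^{H^*}_{F,cb}$ --- a quantity the hypothesis does not control (take $H=C$ and $F$ completely isometric to $C^n$: then $d^H_{F,cb}=1$ while $d^{H^*}_{F,cb}=n$). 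The repair is to invoke Lemma \ref{lem-summing-composition} with $H^*$ in place of $H$ from the start --- legitimate because $H$ perfectly Hilbertian makes $H^*$ homogeneous and subquadratic --- which yields $\nu^o(b)\leq\pi_{2,H^*}(S)\,\pi^*_{2,H^*}(T)$. Now both legs match $\gamma^*_H(w)=\inf\pi_{2,H^*}(w_1)\pi_{2,H}(w_2^*)$ directly: $\pi_{2,H^*}(w_1 i)\leq\pi_{2,H^*}(w_1)$ by the ideal property, and $\pi^*_{2,H^*}(T)\leq C\,\pi_{2,H}(T^*)\leq C\,\pi_{2,H}(w_2^*)$ by \eqref{summing-inversion} (the hypothesis, used exactly once, via Lemma \ref{lem-description}) together with the fact that $T^*$ is $w_2^*$ followed by an orthogonal projection. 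This is precisely the paper's computation and gives $\abs{\text{tr}(b)}\leq C\,\gamma^*_H(w)$ with the constant $C$ rather than $C^2$.
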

\begin{proof}
By combining Lemma \ref{lem-description} and the assumption we get
\begin{equation}\label{summing-inversion}
\pi^*_{2,H^*}(T) \leq C \cdot \pi_{2,H}(T^*)
\end{equation}
for any $T : \ell^n_2 \rightarrow E$.  

Now we fix a $n$-dimensional subspace $F\subseteq E$ and let $i : F \hookrightarrow E$ be the inclusion.
For any $u : F \rightarrow F$ we consider a factorization $$iu : F \stackrel{\alpha}{\rightarrow} \ell^m_2 \stackrel{\beta}{\rightarrow}E.$$
Then by applying Lemma \ref{lem-summing-composition}, \eqref{trace-nuc} and \eqref{summing-inversion} we have
\begin{align*}
\abs{\text{tr}(u)} & \leq \nu^o(u) \leq \pi_{2,H^*}(\alpha)\pi^*_{2,H^*}(\beta|_{\text{ran}\alpha})\\
& \leq C\cdot \pi_{2,H^*}(\alpha)\pi_{2,H}((\beta|_{\text{ran}\alpha})^*)\\
& \leq C\cdot \pi_{2,H^*}(\alpha)\pi_{2,H}(\beta^*).
\end{align*}
By taking infimum over all possible $\alpha$ and $\beta$ we get
$$\abs{\text{tr}(u)} \leq C\cdot \gamma^*_{H}(iu).$$ If we apply Hahn-Banach theorem to the functional $u \mapsto \text{tr}(u)$
we get the desired result.
\end{proof}

\section{The case $1\leq p < 2$}\label{Cp}

\subsection{The construction and basic properties of the canonical basis}\label{subsec-const-basis-Cp}

In this section we will construct Hilbertian operator spaces $X_{C_p}$ and $X_{R_p}$ for $1\leq p <2$,
consequently $X_{C_p}$ and $X_{R_p}$ for $1\leq p \leq \infty, p\neq 2$.
We will mainly focus on $X_{C_p}$ case only since the situation of $X_{R_p}$ is symmetric.

We say that a disjoint collection, $(E_j)^{f(k)}_{j=1}$, of finite subsets of $\mathbb{N}$ is ``allowable" if
$$E_j \subseteq \{k, k+1, \cdots \} \; \text{for all}\; 1\leq j \leq f(k),$$ where $k\in \mathbb{N}$ and $f(k) = (4k^3)^k$.
This specific choice of $f$ will be clarified later in section \ref{sec-example-weakH}.
For a finite subset $E \subseteq \mathbb{N}$ and $$x = \sum_{i \geq 1}x_i\otimes t_i \in \K_0 \otimes c_{00},$$
where $t_i$ is the $i$-th unit vector in $c_{00}$ (finitely supported sequences of complex numbers),
we denote $$Ex = \sum_{i \in E}x_i\otimes t_i.$$

Let $1\leq p < 2$ and $0<\theta <1$ be fixed.
We will define a sequence of norms on $\K_0 \otimes c_{00}$ to construct $X_{C_p}$ (resp. $X_{R_p}$).
For $x \in \K_0 \otimes c_{00}$ we define $$\norm{x}_0 = \norm{x}_{\K \otimes_{\min} (R_p +_p C_p)}.$$
Then $X_0$, the completion of $(c_{00},\norm{\cdot}_0)$, is nothing but the homogeneous Hilbertian operator space $R_p +_p C_p$,
and clearly $\norm{\cdot}_0$ satisfies Ruan's axioms. Now we define $(\norm{\cdot}_n)_{n\geq 0}$ recursively.
Suppose that $\norm{\cdot}_n$ is already defined and satisfies Ruan's axioms (\cite{ER, P3}).
Then $X_n$, the completion of $(c_{00},\norm{\cdot}_n)$ is an operator space, and we define
\begin{align*}
\norm{x}_{n+1} & = \max\Big\{\norm{x}_n, \theta \sup \norm{\sum^{f(k)}_{j=1}e_{j1} \otimes E_j x}_{\K \otimes_{\min} C_p(X_n)}\Big\}\\
& (\text{resp.}\; \max\Big\{\norm{x}_n, \theta \sup \norm{\sum^{f(k)}_{j=1}e_{1j} \otimes E_j x}_{\K \otimes_{\min} R_p(X_n)}\Big\}),
\end{align*}
where the inner supremum runs over all ``allowable" sequence $\{E_j\}^{f(k)}_{j=1} \subseteq \mathbb{N}$.
Then $\norm{\cdot}_{n+1}$ satisfies Ruan's axioms, so that $X_{n+1}$, the completion of $(c_{00},\norm{\cdot}_{n+1})$ is an operator space.
Actually, $X_{n+1}$ is a subspace of $X_n \oplus_{\infty} \ell_{\infty}(I; \{C_p(X_n)\})$ spanned by elements of the form,
$(x, (\theta E_jx)_{(E_j) \in I})$, where $I$ is the collection of all allowable sequences,
so that $X_{n+1}$ inherits the operator space structure from $X_n \oplus_{\infty} \ell_{\infty}(I; \{C_p(X_n)\})$
(the case for $R_p$ is similar).

\begin{rem}{\rm
When we write $e_{j1}\otimes E_j x \in \K \otimes_{\min} C_p(X_n)$ one should note that $e_{j1} \in C_p$ and $E_j x \in \K \otimes X_n$,
which is twisted in order.
}
\end{rem}

\begin{prop}\label{prop-Hilbertian-Cp}
For any $x \in \K_0 \otimes c_{00}$, $(\norm{x}_n)_{n\geq 0}$ is increasing, and we have
$$\norm{x}_{\K \otimes_{\min}(R_p +_p C_p)} \leq \norm{x}_n \leq \norm{x}_{\K \otimes_{\min} C_p}$$
for all $n \geq 0$.
\end{prop}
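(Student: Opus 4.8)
The plan is to prove the three assertions in order of increasing difficulty, with the monotonicity and the lower bound being purely formal and the upper bound carrying all the content. That $(\norm{x}_n)_{n\geq 0}$ is increasing is immediate from the recursive definition, since $\norm{x}_{n+1}$ is a maximum whose first entry is $\norm{x}_n$; consequently $\norm{x}_n \geq \norm{x}_0 = \norm{x}_{\K \otimes_{\min}(R_p +_p C_p)}$, which is exactly the asserted lower bound. Thus everything reduces to the upper bound $\norm{x}_n \leq \norm{x}_{\K \otimes_{\min} C_p}$, which I would establish by induction on $n$.

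For the base case I would use that the canonical inclusion $C_p \to R_p +_p C_p$ is completely contractive: the map $z \mapsto (0,z)$ is a complete isometry $C_p \to R_p \oplus_p C_p$, and the quotient map onto $R_p +_p C_p = (R_p \oplus_p C_p)/(R_p \cap C_p)$ is a complete metric surjection, so their composite is completely contractive and acts as the identity on the canonical basis. Tensoring with $I_\K$ and using monotonicity of $\otimes_{\min}$ gives $\norm{x}_0 = \norm{x}_{\K \otimes_{\min}(R_p +_p C_p)} \leq \norm{x}_{\K \otimes_{\min} C_p}$. For the inductive step I would read the hypothesis $\norm{x}_n \leq \norm{x}_{\K \otimes_{\min} C_p}$ as the statement that the formal identity $j_n : C_p \to X_n$ (the identity on $c_{00}$) is completely contractive, i.e. $\norm{j_n}_{cb} \leq 1$, since $\norm{j_n}_{cb} = \norm{I_\K \otimes j_n : \K \otimes_{\min} C_p \to \K \otimes_{\min} X_n}$ and the norm on $\K_0 \otimes c_{00}$ defining $X_n$ is precisely $\norm{\cdot}_n$. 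By \eqref{cb-extension} with $S = C_p$ this gives $\norm{I_{C_p} \otimes j_n : C_p(C_p) \to C_p(X_n)}_{cb} \leq 1$, so that for any allowable $\{E_j\}_{j=1}^{f(k)}$,
$$\norm{\sum_{j=1}^{f(k)} e_{j1} \otimes E_j x}_{\K \otimes_{\min} C_p(X_n)} \leq \norm{\sum_{j=1}^{f(k)} e_{j1} \otimes E_j x}_{\K \otimes_{\min} C_p(C_p)},$$
where on the right $E_j x$ is now read with coefficients in $C_p$. This replaces the recursively defined inner space $X_n$ by the fixed space $C_p$.

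It then remains to bound the right-hand side by $\norm{x}_{\K \otimes_{\min} C_p}$. Here I would invoke the complete isometry $C_p(C_p) = C_p \otimes_h C_p \cong C_p$ over the product index set $\n \times \n$, which follows from $C \otimes_h C = C$ and $R \otimes_h R = R$ together with the commutation of the Haagerup tensor product with complex interpolation applied to $C_p = [C,R]_{1/p}$. Under this identification $\sum_j e_{j1} \otimes E_j x$ equals $(I_\K \otimes V)(x)$, where $V : C_p \to C_p(\n \times \n)$ sends the canonical basis vector $e_i$ to $e_{(j(i),i)}$ when $i$ lies in the unique block $E_{j(i)}$ (unique by disjointness of the $E_j$) and to $0$ otherwise. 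Since $i \mapsto (j(i),i)$ is injective, $V$ is a canonical basis inclusion and hence completely contractive, so the right-hand side is at most $\norm{x}_{\K \otimes_{\min} C_p}$. Combining this with $\theta < 1$ bounds the second entry of the maximum, while the inductive hypothesis bounds the first, giving $\norm{x}_{n+1} \leq \norm{x}_{\K \otimes_{\min} C_p}$ and closing the induction.

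I expect the main obstacle to be the clean identification $C_p(C_p) \cong C_p$ and the verification that the blocking map $V$ is completely contractive uniformly in $p$; for $p = \infty$ this is the concrete fact $C \otimes_h C = C$ (so that disjointness yields $\sum_j \sum_{i \in E_j} x_i^* x_i \leq \sum_i x_i^* x_i$ directly), but for general $1 \leq p < 2$ it rests on the interpolation identity above. Once that structural fact is in place, the disjointness of the $E_j$ is exactly what makes the blocking injective, and the remaining estimates are routine.
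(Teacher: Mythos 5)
Your proof is correct and takes essentially the same route as the paper's: induction on $n$, reading the inductive hypothesis as complete contractivity of the formal identity $C_p \to X_n$, tensoring with $C_p$ via \eqref{cb-extension} to replace $X_n$ by $C_p$, and then exploiting disjointness of the $E_j$ together with the identification $C_p(C_p)\cong C_p$. The only cosmetic difference is that the paper executes the last step as a direct square-function computation, $\norm{\big(\sum_j\sum_{i\in E_j}x_i^*x_i\big)^{\frac{1}{2}}}_{S_p}\leq\norm{\big(\sum_i x_i^*x_i\big)^{\frac{1}{2}}}_{S_p}$, whereas you package it as complete contractivity of the blocking map $V$; these are the same estimate.
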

\begin{proof}
The left inequality is clear. For the right inequality we use induction on $n$. When $n=0$, it is trivial.
Suppose we have the right inequality for $n$ and for all $x \in \K_0 \otimes c_{00}$, equivalently,
the formal identity $C_p \rightarrow X_n$ is completely contractive. Then we have
\begin{align*}
\theta\norm{\sum^{f(k)}_{j=1}e_{j1} \otimes E_j x}_{S_p(C_p(X_n))}
& = \theta\norm{\sum^{f(k)}_{j=1}\sum_{i \in E_j}x_i\otimes e_i \otimes e_{j1}}_{S_p(C_p(X_n))}\\
& \leq \theta\norm{\sum^{f(k)}_{j=1}\sum_{i \in E_j}x_i\otimes e_{i1} \otimes e_{j1}}_{S_p(C_p(C_p))}\\
& = \theta\norm{\Big(\sum^{f(k)}_{j=1}\sum_{i \in E_j}x^*_i x_i\Big)^{\frac{1}{2}}}\\
& \leq \theta\norm{x}_{S_p(C_p)} < \norm{x}_{S_p(C_p)}.
\end{align*}

Thus, we have that $$x \mapsto \theta \sum^{f(k)}_{j=1}e_{j1} \otimes E_j x, \; C_p \rightarrow C_p(X_n)$$ is a complete contraction, and so is
$$x \mapsto x\oplus \theta \sum^{f(k)}_{j=1}e_{j1} \otimes E_j x, \; C_p \rightarrow X_n\oplus_{\infty}C_p(X_n)$$ by the assumption,
which leads us to the right inequality for $n+1$ and for all $x \in \K_0 \otimes c_{00}$.
\end{proof}

Now we can consider $\norm{x} = \lim_{n\rightarrow \infty}\norm{x}_n$ for all $x \in \K_0 \otimes c_{00}$,
and clearly $\norm{\cdot}$ satisfies Ruan's axioms, so that $X_{C_p}$ (resp. $X_{R_p}$),
the completion of $(c_{00},\norm{\cdot})$ is an operator space.
Actually, $X_{C_p}$ (resp. $X_{R_p}$) is a subspace of $\ell_{\infty}(X_n)$ spanned by elements of the form, $(x, x, \cdots, x)$,
so that $X_{C_p}$ (resp. $X_{R_p}$) inherits the operator space structure from $\ell_{\infty}(X_n)$.
Moreover, $X_{C_p}$ (resp. $X_{R_p}$) is Hilbertian by Proposition \ref{prop-Hilbertian-Cp}.

We have a slight different form of $\norm{\cdot}_n$ which will be useful later.

\begin{prop}\label{prop-another-def-Cp}
For any $x \in \K_0 \otimes c_{00}$ and any $n\geq 0$ we have
$$\norm{x}_{n+1} = \max\Big\{\norm{x}_0, \theta \sup \norm{\sum^{f(k)}_{j=1}e_{j1} \otimes E_j x}_{\K \otimes_{\min} C_p(X_n)}\Big\},$$
where the inner supremum runs over all ``allowable" sequence $\{E_j\}^{f(k)}_{j=1} \subseteq \mathbb{N}$.
\end{prop}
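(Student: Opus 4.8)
The plan is to run an induction on $n$ whose only subtlety is a monotonicity observation; the statement differs from the defining recursion solely in that the first entry of the maximum is $\norm{x}_0$ instead of $\norm{x}_n$. Write
$$A_n(x) := \theta \sup \norm{\sum^{f(k)}_{j=1} e_{j1}\otimes E_j x}_{\K \otimes_{\min} C_p(X_n)},$$
the supremum being over all allowable $(E_j)^{f(k)}_{j=1}$, so that the recursive definition reads $\norm{x}_{n+1} = \max\{\norm{x}_n, A_n(x)\}$ and the claim to be proved is $\norm{x}_{n+1} = \max\{\norm{x}_0, A_n(x)\}$.

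The key preliminary step is that $A_n(x)$ is nondecreasing in $n$. By Proposition \ref{prop-Hilbertian-Cp} the norms $\norm{\cdot}_n$ increase with $n$ on all of $\K_0 \otimes c_{00}$, and since $\K_0$ exhausts the matrix levels this says precisely that the formal identity $X_n \to X_{n-1}$ is completely contractive. Extending by \eqref{cb-extension} with $S = C_p$, the induced map $C_p(X_n) \to C_p(X_{n-1})$ is completely contractive, and tensoring it with $I_{\K}$ inside $\otimes_{\min}$ keeps it contractive; hence $\norm{\cdot}_{\K \otimes_{\min} C_p(X_{n-1})} \leq \norm{\cdot}_{\K \otimes_{\min} C_p(X_n)}$ on the elements $\sum_j e_{j1}\otimes E_j x$. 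Taking the supremum over allowable sequences and multiplying by $\theta$ gives $A_{n-1}(x) \leq A_n(x)$.

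Granting this, the induction is immediate. The base case $n=0$ is exactly the definition of $\norm{\cdot}_1$. For the inductive step, assume $\norm{x}_n = \max\{\norm{x}_0, A_{n-1}(x)\}$; then
$$\norm{x}_{n+1} = \max\{\norm{x}_n, A_n(x)\} = \max\{\norm{x}_0, A_{n-1}(x), A_n(x)\} = \max\{\norm{x}_0, A_n(x)\},$$
the last equality using the monotonicity $A_{n-1}(x)\leq A_n(x)$ just established. Equivalently, unrolling the recursion once and for all gives $\norm{x}_{n+1} = \max\{\norm{x}_0, A_0(x), \dots, A_n(x)\}$, which collapses to $\max\{\norm{x}_0, A_n(x)\}$ because $A_0(x)\leq \cdots \leq A_n(x)$.

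The only place requiring genuine care is the monotonicity step, and within it the point that the increasing property of Proposition \ref{prop-Hilbertian-Cp} is \emph{complete} (holds at every matrix level), so that $C_p(X_n)\to C_p(X_{n-1})$ is legitimately a complete contraction and \eqref{cb-extension} applies; everything after that is a one-line absorption into a maximum. The $R_p$ case is verbatim after replacing $e_{j1}$ by $e_{1j}$ and $C_p$ by $R_p$.
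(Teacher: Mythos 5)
Your proof is correct and rests on exactly the same key observation as the paper's: the formal identity $X_n \to X_{n-1}$ is a complete contraction, so by \eqref{cb-extension} the induced map $C_p(X_n)\to C_p(X_{n-1})$ is completely contractive and hence the quantities $A_n(x)$ are nondecreasing in $n$. The paper merely packages this as a downward case analysis (if $\norm{x}_{n+1}>A_n(x)$ then $\norm{x}_{n+1}=\norm{x}_n$, and one descends to $\norm{x}_0$) rather than your upward unrolling of the maximum, which is only a cosmetic difference.
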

\begin{proof}
Suppose we have $$\norm{x}_{n+1} > \theta \sup \Big\{\norm{\sum^{f(k)}_{j=1}e_{j1} \otimes E_j x}_{\K \otimes_{\min} C_p(X_n)}\Big\}$$
for an $x \in \K_0 \otimes c_{00}$. Then by the definition of $\norm{\cdot}_{n+1}$ we have $\norm{x}_{n+1} = \norm{x}_n$.
Since the formal identity $i_n : X_{n} \rightarrow X_{n-1}$ is clearly a complete contraction we get another complete contraction
$$I_{C_p}\otimes i_n : C_p(X_n) \rightarrow C_p(X_{n-1})$$ by \eqref{cb-extension}. Thus, it follows that
\begin{align*}
\norm{x}_{n} & > \theta \sup \Big\{\norm{\sum^{f(k)}_{j=1}e_{j1} \otimes E_j x}_{\K \otimes_{\min} C_p(X_n)}\Big\}\\
& \geq \theta \sup \Big\{\norm{\sum^{f(k)}_{j=1}e_{j1} \otimes E_j x}_{\K \otimes_{\min} C_p(X_{n-1})}\Big\}
\end{align*}
and hence $\norm{x}_{n} = \norm{x}_{n-1}$. If we repeat this process, then we get $\norm{x}_{n+1} = \norm{x}_0$.
\end{proof}

We say that a basis $\{f_i\}_{i \geq 1}$ of an operator space $E$ is $C$-completely unconditional if
$$\norm{\sum_{i \geq 1}a_i x_i\otimes f_i}_{\K \otimes_{\min}E} \leq C \norm{\sum_{i \geq 1}x_i\otimes f_i}_{\K \otimes_{\min}E}$$
for any finitely supported sequence of $\{x_i\}_{i \geq 1}\subseteq \K_0$ and any sequence of scalars
$(a_i)_{i \geq 1}$ with $\abs{a_i} \leq 1$ for all $i \geq 1$.

\begin{prop}\label{prop-basis-Cp}
The canonical basis $\{t_i\}_{i\geq 1}$ is a normalized 1-completely unconditional basis for $X_{C_p}$.
\end{prop}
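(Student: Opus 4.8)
The plan is to reduce $1$-complete unconditionality to a statement about diagonal multipliers and then run an induction on $n$ that mirrors the recursive definition of the norms. For a finitely supported scalar sequence $a=(a_i)$ with $\abs{a_i}\le 1$ let $D_a$ denote the diagonal map $t_i\mapsto a_it_i$ on $c_{00}$. By the very definition of $C$-complete unconditionality, the assertion with $C=1$ is precisely that $\norm{D_a}_{cb}\le 1$ when $D_a$ is regarded as a map on $X_{C_p}$, uniformly over all such $a$; and since $\norm{\cdot}=\lim_n\norm{\cdot}_n$ it suffices to prove $\norm{D_a:X_n\rightarrow X_n}_{cb}\le 1$ for every $n\ge 0$ and then let $n\to\infty$.

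For the base case $n=0$ we have $X_0=R_p+_pC_p$. From the formulas for $S_p(C_p)$ and $S_p(R_p)$ recorded in Section \ref{sec-prelim} together with \eqref{Sp-cb-check}, the multiplier $D_a$ is completely contractive on $C_p$ and on $R_p$: on $S^m_p(C_p)$ it replaces $\sum_i x^*_ix_i$ by $\sum_i\abs{a_i}^2x^*_ix_i\le\sum_i x^*_ix_i$, and the map $T\mapsto\norm{T^{1/2}}_{S_p}$ is monotone on positive operators, with the symmetric computation for $R_p$. Hence $D_a\oplus D_a$ is completely contractive on $R_p\oplus_pC_p$, and since $D_a$ preserves the copy of $R_p\cap C_p$ it descends to a complete contraction on the quotient $R_p+_pC_p=X_0$, using \eqref{Sp-sum}.

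For the inductive step assume $\norm{D_a:X_n\rightarrow X_n}_{cb}\le 1$. By \eqref{cb-extension} this yields $\norm{I_{C_p}\otimes D_a:C_p(X_n)\rightarrow C_p(X_n)}_{cb}\le 1$, so that $I_{\K}\otimes(I_{C_p}\otimes D_a)$ is a contraction on $\K\otimes_{\min}C_p(X_n)$. Now fix $x\in\K_0\otimes c_{00}$ and an allowable family $\{E_j\}^{f(k)}_{j=1}$. Because $D_a$ is diagonal it commutes with each restriction, $E_j(D_ax)=D_a(E_jx)$, and therefore $\sum^{f(k)}_{j=1}e_{j1}\otimes E_j(D_ax)$ is the image of $\sum^{f(k)}_{j=1}e_{j1}\otimes E_jx$ under $I_{\K}\otimes(I_{C_p}\otimes D_a)$; in particular it has no larger $\K\otimes_{\min}C_p(X_n)$-norm. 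Taking the supremum over allowable families and combining with $\norm{D_ax}_n\le\norm{x}_n$ (the inductive hypothesis applied to the first entry of the max) gives $\norm{D_ax}_{n+1}\le\norm{x}_{n+1}$, i.e. $\norm{D_a:X_{n+1}\rightarrow X_{n+1}}_{cb}\le 1$. Passing to the limit establishes that $\{t_i\}_{i\ge1}$ is $1$-completely unconditional.

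Normalization is easier: since the blocks $E_j$ are pairwise disjoint, a single basis vector $t_i$ is captured by at most one block, so for $x=e_{11}\otimes t_i$ the second entry of the recursion equals $\theta\norm{t_i}_n$, which is strictly smaller than $\norm{t_i}_n$ because $\theta<1$; hence $\norm{t_i}_n=\norm{t_i}_0$ for all $n$, and this common value is independent of $i$ by homogeneity of $R_p+_pC_p$, which is exactly what normalization asserts. The only delicate point in the whole argument is the interface between the two factors of $C_p(X_n)$: one must apply $D_a$ to the $X_n$-variable while leaving the $C_p$-variable (the vectors $e_{j1}$) untouched, and one must check that $D_a$ really descends to the quotient defining $R_p+_pC_p$. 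The first is licensed precisely by \eqref{cb-extension} and the second by the diagonal nature of $D_a$, so neither is a genuine obstacle; the proof is in essence a bookkeeping induction that tracks the two terms in the definition of $\norm{\cdot}_{n+1}$.
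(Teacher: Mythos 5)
Your proof is correct and follows essentially the same route as the paper: an induction on $n$ whose base case checks that the diagonal multiplier is completely contractive on $R_p+_pC_p$ (you argue via descent to the quotient, the paper writes out the same fact as the infimum over decompositions $x_i=y_i+z_i$ in $S_p$), and whose inductive step uses \eqref{cb-extension} together with the fact that $D_a$ commutes with the coordinate restrictions $E_j$. Your closing paragraph on normalization is an extra remark the paper does not spell out, but it does not change the argument.
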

\begin{proof}
We will use induction on $n$, to show that $\{t_i\}_{i\geq 1}$ is a normalized 1-completely unconditional basis for $X_n$.
First, we fix a sequence $(a_i)_{i\geq 1}$ with $\abs{a_i} \leq 1$ for all $i \geq 1$.

When $n=0$, for any $\sum_{i \geq 1}x_i\otimes e_i \in \K_0 \otimes c_{00}$ we have
\begin{align*}
\lefteqn{\norm{\sum_{i\geq 1}a_i x_i\otimes t_i}^p_{S_p(R_p+_p C_p)}}\\
& = \inf_{x_i = y_i + z_i}\Big\{\norm{\Big(\sum_{i\geq 1}\abs{a_i}^2 y_iy^*_i\Big)^{\frac{1}{2}}}^p_{S_p} +
\norm{\Big(\sum_{i\geq 1}\abs{a_i}^2 z^*_iz_i\Big)^{\frac{1}{2}}}^p_{S_p}\Big\}\\
& \leq \inf_{x_i = y_i + z_i}\Big\{\norm{\Big(\sum_{i\geq 1}y_iy^*_i\Big)^{\frac{1}{2}}}^p_{S_p}+
\norm{\Big(\sum_{i\geq 1}z^*_iz_i\Big)^{\frac{1}{2}}}^p_{S_p}\Big\}\\
& = \norm{\sum_{i\geq 1} x_i\otimes t_i}^p_{S_p(R_p +_p C_p)},
\end{align*}
since $\abs{a_i}^2 y_iy^*_i \leq y_iy^*_i$ and $\abs{a_i}^2 z^*_iz_i \leq z^*_iz_i$.

Suppose we have the result for $n$, which is equivalent to $$X_n \rightarrow X_n,\; t_i \mapsto a_it_i$$ is a complete contraction.
Then by \eqref{cb-extension} we have another complete contraction
$$C_p(X_n) \rightarrow C_p(X_n),\; e_{j1}\otimes t_i \mapsto e_{j1}\otimes a_it_i.$$
Thus, for any ``allowable" sequence $\{E_j\}^{f(k)}_{j=1} \subseteq \mathbb{N}$ we have
\begin{align*}
\lefteqn{\theta \norm{\sum^{f(k)}_{j=1}e_{j1} \otimes E_j\Big(\sum_i a_ix_i\otimes t_i\Big)}_{\K \otimes_{\min} (C_p(X_n))}}\\
& = \theta \norm{\sum^{f(k)}_{j=1} \sum_{i\in E_j}x_i \otimes e_{j1} \otimes a_it_i}_{\K \otimes_{\min} (C_p(X_n))}\\
& \leq \theta \norm{\sum^{f(k)}_{j=1} \sum_{i\in E_j}x_i \otimes e_{j1} \otimes t_i}_{\K \otimes_{\min} (C_p(X_n))}\\
& = \theta \norm{\sum^{f(k)}_{j=1}e_{j1} \otimes E_j\Big(\sum_i x_i\otimes t_i\Big)}_{\K \otimes_{\min} (C_p(X_n))}
\leq \norm{\sum_i x_i \otimes t_i}_{n+1},
\end{align*}
which implies the result for $n+1$.
\end{proof}

Now we investigate the operator space structure of the subspace spanned by certain normalized and disjoint block sequences of
$\{t_i\}_{i\geq 1} \subseteq X_{C_p}$. They are $\theta$-completely isomorphic to $C_p$ with the same dimension.

\begin{prop}\label{prop-block-oss}
Let $(y_j)^{f(k)}_{j=1}$ be a disjoint and normalized block sequences of $\{t_i\}_{i\geq 1}\subseteq X_{C_p}$
with $\text{\rm supp}y_j \subseteq \{k, k+1, \cdots\}$ for $1\leq j \leq f(k)$. Then we have
$$\theta \norm{\sum^{f(k)}_{j=1} b_j \otimes e_{j1}}_{S_p(C_p)} \leq \norm{\sum^{f(k)}_{j=1} b_j \otimes y_j}_{S_p(X_{C_p})}
\leq \norm{\sum^{f(k)}_{j=1} b_j \otimes e_{j1}}_{S_p(C_p)}$$
for any $(b_j)^{f(k)}_{j=1} \subseteq S_p$.
\end{prop}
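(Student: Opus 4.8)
The plan is to prove the two inequalities separately: the upper estimate from the formal identity $C_p \to X_{C_p}$, and the lower one from the recursive definition of the norm together with a coordinatewise functional extraction.

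For the upper bound I would first record the decisive consequence of $X_{C_p}$ being Hilbertian. Since $\{t_i\}_{i\geq 1}$ is a normalized $1$-completely unconditional basis (Proposition \ref{prop-basis-Cp}) of a space isometric to a Hilbert space, the $t_i$ are orthonormal, so writing $y_j = \sum_i c^{(j)}_i t_i$ the normalization $\norm{y_j}=1$ forces $\sum_i \abs{c^{(j)}_i}^2 = 1$. By the right inequality of Proposition \ref{prop-Hilbertian-Cp} the formal identity $C_p \to X_{C_p}$, $e_{i1}\mapsto t_i$, is completely contractive, hence $S_p$-valued contractive by \eqref{Sp-cb-check}. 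Rewriting $\sum_j b_j \otimes y_j = \sum_i d_i \otimes t_i$ with $d_i = \sum_{j : i \in \text{supp}\,y_j} c^{(j)}_i b_j$ and noting that the disjointness of the supports leaves at most one $j$ contributing to each $d_i$, I get $\norm{\sum_j b_j \otimes y_j}_{S_p(X_{C_p})} \leq \norm{\sum_i d_i \otimes e_{i1}}_{S_p(C_p)} = \norm{(\sum_i d^*_i d_i)^{1/2}}_{S_p}$. Disjointness kills the cross terms, so $\sum_i d^*_i d_i = \sum_j (\sum_i \abs{c^{(j)}_i}^2) b^*_j b_j = \sum_j b^*_j b_j$, which is $\norm{\sum_j b_j \otimes e_{j1}}^2_{S_p(C_p)}$. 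The Hilbertian normalization $\sum_i \abs{c^{(j)}_i}^2 = 1$ is essential here; without it one would only obtain a weighted estimate.

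For the lower bound the key observation is that $(\text{supp}\,y_j)^{f(k)}_{j=1}$ is an ``allowable'' family: the sets are disjoint, there are exactly $f(k)$ of them, and each lies in $\{k,k+1,\dots\}$. Thus for every $n$ the recursive definition of $\norm{\cdot}_{n+1}$ makes $z \mapsto \theta \sum_j e_{j1} \otimes (\text{supp}\,y_j)\,z$ a complete contraction from $X_{C_p}$ into $C_p(X_n)$, hence $S_p$-valued contractive by \eqref{Sp-cb-check}. Applying it to $\sum_{j'} b_{j'} \otimes y_{j'}$ and using $(\text{supp}\,y_j)\,y_{j'} = \delta_{jj'}y_j$, I obtain $\norm{\sum_{j'} b_{j'} \otimes y_{j'}}_{S_p(X_{C_p})} \geq \theta \norm{\sum_j b_j \otimes e_{j1} \otimes y_j}_{S_p(C_p(X_n))}$ for all $n$. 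Letting $n\to\infty$, where the norms of $C_p(X_n)$ increase to those of $C_p(X_{C_p})$ by \eqref{cb-extension} and Proposition \ref{prop-Hilbertian-Cp}, replaces $X_n$ by $X_{C_p}$.

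It then remains to extract $\sum_j b_j \otimes e_{j1}$ from $\sum_j b_j \otimes e_{j1} \otimes y_j$ inside $S_p(C_p(X_{C_p}))$. For this I would take the functionals $g_j \in X^*_{C_p}$ biorthogonal to the orthonormal $y_j$, which satisfy $\norm{g_j}_{cb} = \norm{g_j} = 1$ and $g_j(y_j)=1$, and apply the ``diagonal'' map $C_p(X_{C_p}) \to C_p$, $\sum_j e_{j1} \otimes z_j \mapsto \sum_j g_j(z_j) e_{j1}$, whose image of our element is exactly $\sum_j b_j \otimes e_{j1}$; this yields $\theta\norm{\sum_j b_j \otimes e_{j1}}_{S_p(C_p)} \leq \norm{\sum_j b_j \otimes y_j}_{S_p(X_{C_p})}$. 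The main technical obstacle is the complete contractivity of this coordinatewise-functional map on column spaces: unlike the single-functional case it does not follow directly from \eqref{cb-extension}, and I expect to establish it through a Stinespring representation $g_j(\cdot) = \langle \pi(\cdot)\xi_j,\eta_j\rangle$ with $\norm{\xi_j},\norm{\eta_j}\leq 1$ together with a sandwiching argument exploiting the matching of the column index with the functional index. The rest is bookkeeping: passing each cb-statement to the $S_p$-valued level via \eqref{Sp-cb-check} and controlling the limit in $n$.
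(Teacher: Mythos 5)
Your upper bound is correct and is in fact a shortcut compared with the paper: instead of the inductive proof of the stronger inequality \eqref{stronger}, you expand $\sum_j b_j\otimes y_j=\sum_i d_i\otimes t_i$, push it through the completely contractive formal identity $C_p\to X_{C_p}$ of Proposition \ref{prop-Hilbertian-Cp}, and use disjointness plus the Hilbertian normalization $\sum_i\abs{c^{(j)}_i}^2=1$ to get $\sum_i d_i^*d_i=\sum_j b_j^*b_j$. That computation is valid and gives exactly $\norm{\sum_j b_j\otimes e_{j1}}_{S_p(C_p)}$ on the nose.

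The lower bound, however, has a genuine gap exactly where you flag it. After the (correct) first step $\norm{\sum_j b_j\otimes y_j}_{S_p(X_{C_p})}\geq\theta\norm{\sum_j b_j\otimes e_{j1}\otimes y_j}_{S_p(C_p(X_n))}$, you want to finish with the coordinatewise-functional map $C_p(E)\to C_p$, $e_{j1}\otimes z\mapsto g_j(z)\,e_{j1}$, justified only by $\norm{g_j}_{cb}=1$. No such general principle holds: this map is not of the form $I_{C_p}\otimes T$, and its boundedness genuinely depends on $p$ and on $E$, not just on the cb-norms of the $g_j$. For instance, with $E=R$, $g_j$ the coordinate functionals and $p=\infty$, the map becomes diagonal extraction $C\OTm R=\K\to C$, which sends the norm-one identity matrix $I_n$ to $\sum_{j=1}^n e_{j1}$ of norm $\sqrt n$ --- so the statement you need cannot follow from a soft Stinespring/sandwiching argument that ignores the value of $p$ and the lower operator space structure of $E$. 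The paper closes this step quite differently: Lemma \ref{lem-block-lower-bound} first replaces $y_j$ by $t_{n_j}$ (an induction that again exploits the allowable-set maps), and then Lemma \ref{lem-emerging} drops to $X_0=R_p+_pC_p$ and uses the completely isometric identification $C_p(R_p+_pC_p)\cong S_p+_pC_p$ together with the complete contraction $\ell_p\to C_p$ (valid precisely for $1\leq p\leq 2$) to show that $\overline{\text{span}}\{e_{j1}\otimes e_j\}$ is completely isometric to $C_p$. It is this concrete structure of $X_0$, not the normalization of biorthogonal functionals, that produces the lower estimate; your argument never uses the lower bound $\norm{\cdot}_0=\norm{\cdot}_{R_p+_pC_p}$ at all, which is a sign something essential is missing. (The passage ``$n\to\infty$'' replacing $X_n$ by $X_{C_p}$ is also unjustified as written, since the cb-contractions go the wrong way, but it is avoidable: the paper simply stays at level $X_n$ and bounds below by the $X_0$ norm.)
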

\begin{proof}
For the left inequality we set $E_j = \text{supp}y_j$ and $n_j = \min \{\text{supp}y_j\}$. Since
$$X_{C_p} \rightarrow C_p(X_n), \; x\mapsto \theta \sum^{f(k)}_{j=1}e_{j1} \otimes E_j x$$ is a complete contraction we have
\begin{align*}
\norm{\sum^{f(k)}_{i=1} b_i \otimes y_i}_{S_p(X_{C_p})}
& \geq \theta \norm{\sum^{f(k)}_{j=1}e_{j1}\otimes E_j\Big(\sum^{f(k)}_{i=1} b_i\otimes y_i\Big)}_{S_p(C_p(X_n))}\\
& = \theta \norm{\sum^{f(k)}_{j=1}b_j \otimes e_{j1} \otimes y_j}_{S_p(C_p(X_n))}\\
& \geq \theta \norm{\sum^{f(k)}_{j=1}b_j \otimes e_{j1} \otimes t_{n_j}}_{S_p(C_p(X_n))}\\
& \geq \theta \norm{\sum^{f(k)}_{j=1}b_j \otimes e_{j1}}_{S_p(C_p)}
\end{align*}
by Lemma \ref{lem-block-lower-bound} and \ref{lem-emerging} below for any $(b_i)^{f(k)}_{j=1} \subseteq S_p$.

For the right inequality we will show the following.
For any disjoint and normalized sequence $(y_j)^{f(k)}_{j=1} \subseteq X_{C_p}$ we have
\begin{equation}\label{stronger}
\norm{\sum^{f(k)}_{j=1} b_j \otimes y_j}_{S_p(X_n)} \leq \norm{\sum^{f(k)}_{j=1} b_j \otimes e_{j1}}_{S_p(C_p)}
\end{equation}
for all $(b_j)^{f(k)}_{j=1} \subseteq S_p$.

Let us use induction on $n$.
When $n=0$ we are done since $$CB(C_p, R_p+_p C_p) = B(C_p, R_p+_p C_p)$$ isometrically
and $(e_{j1})_{j\geq 1}$ and $(y_j)^{f(k)}_{j = 1}$ are orthonormal.
Suppose we have \eqref{stronger} for $n$. Then for any ``allowable" sequence $\{E_j\}^{f(l)}_{j=1} \subseteq \mathbb{N}$ we have
\begin{align*}
\lefteqn{\theta \norm{\sum^{f(l)}_{j=1}e_{j1}\otimes E_j\Big(\sum^{f(k)}_{i=1} b_i\otimes y_i\Big)}_{S_p(C_p(X_n))}}\\
& = \norm{\sum^{f(l)}_{j=1}\sum^{f(k)}_{i=1}b_i\otimes e_{j1}\otimes \theta E_j y_i }_{S_p(C_p(X_n))}\\
& \leq \norm{\sum^{f(l)}_{j=1}\sum^{f(k)}_{i=1}b_i\otimes e_{j1}\otimes \norm{\theta E_j y_i}e_{1, ij} }_{S_p(C_p(C_p))}\\
& = \norm{\Big(\sum^{f(l)}_{j=1}\sum^{f(k)}_{i=1}b^*_i b_i \norm{\theta E_j y_i}^2\Big)^{\frac{1}{2}}}_{S_p}
\leq \norm{\Big(\sum^{f(k)}_{i=1}b^*_i b_i \Big)^{\frac{1}{2}}}_{S_p}.
\end{align*}
The last inequality comes from $$\sum^{f(l)}_{j=1} \norm{\theta E_j y_i}^2 \leq \theta^2 \norm{y_i}^2 \leq 1.$$
This conclude \eqref{stronger} for $n+1$ as before.
 
\end{proof}

\begin{lem}\label{lem-block-lower-bound}
Let $(y_j)^{f(k)}_{j=1}$, $(E_j)^{f(k)}_{j=1}$ and $(n_j)^{f(k)}_{j=1}$ be the same as in Proposition \ref{prop-block-oss}. Then we have
\begin{equation}\label{lower}
\norm{\sum^{f(k)}_{j=1} b_j \otimes t_{n_j}}_{S_p(X_n)} \leq \norm{\sum^{f(k)}_{j=1} b_j \otimes y_j}_{S_p(C_p)}
\end{equation}
for all $(b_j)^{f(k)}_{j=1} \subseteq S_p$.
\end{lem}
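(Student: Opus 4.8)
The plan is to avoid any induction on $n$ and instead squeeze both sides of \eqref{lower} against the single common quantity $\norm{(\sum_j b^*_j b_j)^{\frac{1}{2}}}_{S_p} = \norm{\sum_j b_j \otimes e_{j1}}_{S_p(C_p)}$ (the $e_{j1}$ being distinct columns). The two ingredients are the explicit description of the $S_p(C_p)$ norm recalled in Section \ref{sec-prelim} and the two-sided estimate of Proposition \ref{prop-Hilbertian-Cp}; the allowability of $(E_j)$ plays no role here and I expect it to enter only in the companion Lemma \ref{lem-emerging}.

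First I would unwind the right-hand side. Writing $y_j = \sum_{i \in E_j} c_i t_i$ and using that the supports $E_j$ are pairwise disjoint, the identification $t_i \leftrightarrow e_{i1}$ realizes $\sum_j b_j \otimes y_j$ inside $S_p(C_p)$ as $\sum_j \sum_{i \in E_j} (c_i b_j) \otimes e_{i1}$, in which all the occurring column indices $i$ are distinct. The formula for the $S_p(C_p)$ norm then collapses to the diagonal expression
$$\norm{\sum_j b_j \otimes y_j}_{S_p(C_p)} = \norm{\Big(\sum_j \lambda_j\, b^*_j b_j\Big)^{\frac{1}{2}}}_{S_p}, \qquad \lambda_j := \sum_{i \in E_j} \abs{c_i}^2 = \norm{y_j}_{C_p}^2 .$$
Now the normalization enters through Proposition \ref{prop-Hilbertian-Cp}: since the formal identity $C_p \rightarrow X_{C_p}$ is a complete contraction, $1 = \norm{y_j}_{X_{C_p}} \leq \norm{y_j}_{C_p} = \lambda_j^{\frac{1}{2}}$, so $\lambda_j \geq 1$ for every $j$. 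Hence $\sum_j b^*_j b_j \leq \sum_j \lambda_j b^*_j b_j$ as positive operators, and operator monotonicity of the square root together with monotonicity of the $S_p$-norm on positive elements gives
$$\norm{\Big(\sum_j b^*_j b_j\Big)^{\frac{1}{2}}}_{S_p} \leq \norm{\sum_j b_j \otimes y_j}_{S_p(C_p)} .$$

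Finally I would bound the left-hand side of \eqref{lower} by the same common quantity from above. Again by Proposition \ref{prop-Hilbertian-Cp} the formal identity $u : C_p \rightarrow X_n$, $e_{i1} \mapsto t_i$, is a complete contraction, so by \eqref{Sp-cb-check} its amplification $I_{S_p} \otimes u : S_p(C_p) \rightarrow S_p(X_n)$ is contractive. Since the $n_j$ are distinct, $u$ carries $\sum_j b_j \otimes e_{n_j1}$ to $\sum_j b_j \otimes t_{n_j}$, whence
$$\norm{\sum_j b_j \otimes t_{n_j}}_{S_p(X_n)} \leq \norm{\sum_j b_j \otimes e_{n_j1}}_{S_p(C_p)} = \norm{\Big(\sum_j b^*_j b_j\Big)^{\frac{1}{2}}}_{S_p} .$$
Chaining this with the previous display proves \eqref{lower}. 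There is no genuine obstacle once the intermediate quantity $\norm{(\sum_j b^*_j b_j)^{\frac{1}{2}}}_{S_p}$ is isolated; the only points demanding care are the correct reading of $y_j$ as an element of $C_p$ versus $X_n$, the disjointness of the $E_j$ (which is what lets the $C_p$ norm reduce to the diagonal sum), and the normalization $\norm{y_j}_{X_{C_p}}=1$ (which, via the contractive formal identity of Proposition \ref{prop-Hilbertian-Cp}, forces $\lambda_j \geq 1$).
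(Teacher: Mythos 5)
Your computation is internally sound and does establish the inequality exactly as printed: disjointness of the supports collapses $\norm{\sum_j b_j\otimes y_j}_{S_p(C_p)}$ to $\norm{(\sum_j\lambda_j b_j^*b_j)^{\frac{1}{2}}}_{S_p}$ with $\lambda_j=\norm{y_j}_{C_p}^2\geq\norm{y_j}_{X_{C_p}}^2=1$ by Proposition \ref{prop-Hilbertian-Cp}, and the contractive formal identity $C_p\to X_n$ disposes of the left-hand side. The trouble is that the printed form of \eqref{lower} is not the inequality the paper proves, nor the one it uses. In the proof of Proposition \ref{prop-block-oss} the lemma is invoked to justify
$$\norm{\sum_{j}b_j\otimes e_{j1}\otimes y_j}_{S_p(C_p(X_n))}\ \geq\ \norm{\sum_{j}b_j\otimes e_{j1}\otimes t_{n_j}}_{S_p(C_p(X_n))},$$
a comparison in which the $y_j$-side carries the $S_p(X_n)$-norm; consistently, the paper's own proof of the lemma ends with the bound $\leq\norm{y}_{S_p(X_{n+1})}$, not $\leq\norm{y}_{S_p(C_p)}$. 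Since $\norm{\cdot}_{S_p(X_n)}\leq\norm{\cdot}_{S_p(C_p)}$, the version you proved is strictly weaker, and your route cannot be upgraded: there is no a priori reason for your pivot $\norm{(\sum_j b_j^*b_j)^{\frac{1}{2}}}_{S_p}$ to sit \emph{below} $\norm{\sum_j b_j\otimes y_j}_{S_p(X_n)}$ --- producing such a lower bound (at the cost of the factor $\theta$) is exactly what Proposition \ref{prop-block-oss} is trying to achieve, so assuming it would be circular. The fact that your argument reduces the printed statement to an immediate consequence of Proposition \ref{prop-Hilbertian-Cp} is itself a signal that the printed right-hand norm is a misprint for the $X_n$-valued one.

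The missing mechanism is precisely the one you declared irrelevant. The paper argues by induction on $n$: the base case uses homogeneity of $X_0=R_p+_pC_p$ applied to the two orthogonal systems, and the inductive step takes an arbitrary allowable sequence $(F_j)$ tested against $x=\sum_jb_j\otimes t_{n_j}$ and replaces it by $G_j=\{n_i:n_i\in F_j\}$, which is again allowable because $n_i=\min(\text{supp}\,y_i)\geq k$; this identifies $\sum_je_{j1}\otimes F_jx$ with $\sum_je_{j1}\otimes G_jy$ inside $C_p(X_n)$ and hence bounds the supremum defining $\norm{x}_{n+1}$ by $\norm{y}_{n+1}$. This transfer of allowable partitions from the compressed vector $x$ back to the block vector $y$ is the actual content of the lemma and has no counterpart in your proof; without it the lower estimate in Proposition \ref{prop-block-oss} does not close.
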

\begin{proof}
We use induction on $n$. When $n=0$ we are done since $R_p+_p C_p$ is a homogeneous Hilbertian operator space
and $(t_{n_j})^{f(k)}_{j= 1}$ and $(y_j)^{f(k)}_{j = 1}$ are orthonormal.
Suppose we have \eqref{lower} for $n$, and consider any fixed ``allowable" sequence $\{F_j\}^{f(k)}_{j=1} \subseteq \mathbb{N}$.
If we set $$x = \sum^{f(k)}_{j=1} b_j \otimes t_{n_j},\; y = \sum^{f(k)}_{j=1} b_j \otimes y_j$$
and $$G_j = \{ n_i : n_i \in F_j\}\; \text{for}\; 1\leq j \leq f(k),$$ then $$G_jy = \sum_{n_i \in F_j}b_i \otimes t_{n_i}$$
and $\{G_j\}^{f(k)}_{j=1}$ is an ``allowable" sequence. Consequently, we have
\begin{align*}
\theta \norm{\sum^{f(k)}_{j=1} e_{j1} \otimes F_j x}_{S_p(C_p(X_n))}
& = \theta \norm{\sum^{f(k)}_{j=1} \sum_{n_i \in F_j}b_i \otimes e_{j1} \otimes t_{n_i}}_{S_p(C_p(X_n))}\\
& = \theta \norm{\sum^{f(k)}_{j=1} e_{j1} \otimes G_jy }_{S_p(C_p(X_n))}\\
& \leq \norm{y}_{S_p(X_{n+1})}.
\end{align*}
The last line is obtained by the complete contraction
$$X_{n+1} \rightarrow C_p(X_n), \; z\mapsto \theta \sum^{f(k)}_{j=1}e_{j1} \otimes G_j z.$$
This conclude \eqref{lower} for $n+1$ as before.
\end{proof}

\begin{lem}\label{lem-emerging}
We have $$\norm{\sum_{j\geq 1} b_j \otimes e_{j1} \otimes t_j}_{S_p(C_p(X_n))} \geq \norm{\sum_{j \geq 1} b_j \otimes e_{j1}}_{S_p(C_p)}$$
for all finitely supported $(b_j)_{j\geq 1} \subseteq S_p$.
\end{lem}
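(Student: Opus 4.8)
Write $w=\sum_j b_j\otimes e_{j1}\otimes t_j$. The plan is to first reduce to the ground space $X_0=R_p+_pC_p$ and then establish the resulting inequality by trace duality. Since $(\norm{\cdot}_n)_{n\ge0}$ is increasing (Proposition \ref{prop-Hilbertian-Cp}), the formal identity $X_n\to X_0$ is a complete contraction, so by \eqref{cb-extension} and \eqref{Sp-cb-check} it induces a contraction $S_p(C_p(X_n))\to S_p(C_p(R_p+_pC_p))$ sending $w$ to the same expression. Hence $\norm{w}_{S_p(C_p(X_n))}\ge\norm{w}_{S_p(C_p(R_p+_pC_p))}$ and it suffices to prove the claim for $n=0$.

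For the base case I would pass to the dual, using $S_p(Y)^*=S_{p'}(Y^*)$, $(C_p(E))^*=R_{p'}(E^*)$, $S_p(C_p)^*=S_{p'}(R_{p'})$, and, crucially, $(R_p+_pC_p)^*=C_{p'}\cap R_{p'}$, the operator space intersection carrying the maximum of the $C_{p'}$- and $R_{p'}$-norms. Pick a norming functional $\eta=\sum_j c_j\otimes e_{1j}\in S_{p'}(R_{p'})$ for $\norm{\sum_j b_j\otimes e_{j1}}_{S_p(C_p)}=\norm{(\sum_j b_j^*b_j)^{1/2}}_{S_p}$, normalized so that $\norm{\eta}_{S_{p'}(R_{p'})}=\norm{(\sum_j c_jc_j^*)^{1/2}}_{S_{p'}}=1$. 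I would then lift it to $\Phi=\sum_j c_j\otimes e_{1j}\otimes t_j^*\in S_{p'}(R_{p'}(C_{p'}\cap R_{p'}))$, where $t_j^*$ is the $j$-th coordinate functional, of intersection norm $1$. Since $e_{j1}$ and $t_j$ share the index $j$, the pairing collapses to the diagonal and $\langle w,\Phi\rangle=\langle\sum_j b_j\otimes e_{j1},\eta\rangle=\norm{\sum_j b_j\otimes e_{j1}}_{S_p(C_p)}$; so everything comes down to checking $\norm{\Phi}\le1$.

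Because the last leg is an intersection, $\norm{\Phi}$ is the maximum of two norms. Reading $t_j^*$ in $R_{p'}$ turns $\Phi$ into $\sum_j c_j\otimes e_{1j}\otimes e_{1j}$, which under $R_{p'}(R_{p'})=R_{p'}$ becomes $\sum_j c_j\otimes e_{1,(j,j)}$ of norm $\norm{(\sum_j c_jc_j^*)^{1/2}}_{S_{p'}}=1$. Reading $t_j^*$ in $C_{p'}$ turns it into $\sum_j c_j\otimes e_{1j}\otimes e_{j1}$; via $R_{p'}(C_{p'})=C_{p'}\otimes_hR_{p'}=S_{p'}$ and $e_{j1}\otimes e_{1j}\mapsto e_{jj}$ this is the block-diagonal operator with blocks $c_j$, of norm $(\sum_j\norm{c_j}_{S_{p'}}^{p'})^{1/p'}$. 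Thus $\norm{\Phi}=\max\{1,(\sum_j\norm{c_j}_{S_{p'}}^{p'})^{1/p'}\}$, and the base case is reduced to the single inequality $(\sum_j\norm{c_j}_{S_{p'}}^{p'})^{1/p'}\le\norm{(\sum_j c_jc_j^*)^{1/2}}_{S_{p'}}$.

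This last estimate is the main point, and it is exactly where $1\le p<2$ enters. Putting $D_j=c_jc_j^*\ge0$ and $r=p'/2$, it is the superadditivity $\sum_j\mathrm{tr}(D_j^{\,r})\le\mathrm{tr}\big((\sum_j D_j)^r\big)$ of $A\mapsto\mathrm{tr}(A^r)$ on positive operators, a standard (Rotfel'd-type) trace inequality that holds precisely for $r\ge1$, i.e. $p'\ge2$, i.e. $p\le2$; for $p>2$ it reverses, which is why one must then work with $X_{R_p}$ in place of $X_{C_p}$. Granting it, $\norm{\Phi}=1$, so $\norm{w}_{S_p(C_p(R_p+_pC_p))}\ge\langle w,\Phi\rangle=\norm{\sum_j b_j\otimes e_{j1}}_{S_p(C_p)}$, which with the first paragraph proves the lemma.
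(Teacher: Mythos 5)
Your overall route mirrors the paper's more closely than it may appear: the reduction to $n=0$ via the complete contraction $X_n\to X_0=R_p+_pC_p$ is exactly the paper's first step, and your base case is the dual formulation of the paper's. The paper stays on the primal side: it identifies $C_p(R_p+_pC_p)\cong C_p(R_p)+_pC_p(C_p)\cong S_p+_pC_p$, observes that $\overline{\mathrm{span}}\{e_{j1}\otimes e_j\}$ corresponds to $\ell_p$ and $C_p$ in the two summands, and invokes the complete contractivity of $\ell_p\to C_p$, obtained by interpolating the endpoints $\ell_1\to C_1$ and $\ell_2\to C_2$. Your key inequality $(\sum_j\norm{c_j}_{S_{p'}}^{p'})^{1/p'}\le\norm{(\sum_jc_jc_j^*)^{1/2}}_{S_{p'}}$ is precisely the dual statement of that contractivity (contractivity of $R_{p'}\to\ell_{p'}$ at the $S_{p'}$ level), and proving it by superadditivity of $A\mapsto\mathrm{tr}(A^r)$ for $r=p'/2\ge1$ is a correct and more self-contained alternative to the interpolation argument; your remark that this is exactly where $p\le 2$ enters is accurate.

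The one concrete error is the dual norm. Since $R_p+_pC_p=(R_p\oplus_pC_p)/(R_p\cap C_p)$, its dual is the diagonal of $C_{p'}\oplus_{p'}R_{p'}$, i.e.\ the intersection carrying the norm $(\norm{x}_{C_{p'}}^{p'}+\norm{x}_{R_{p'}}^{p'})^{1/p'}$; the maximum is correct only at $p=1$, $p'=\infty$. With the correct norm your two component estimates give only $\norm{\Phi}\le 2^{1/p'}$ rather than $\norm{\Phi}=1$, so the duality argument yields the inequality with constant $2^{-1/p'}=2^{1/p-1}$ when $1<p<2$. This is not something you can repair: already for a single nonzero $b_1$ the left-hand side of the lemma equals $\norm{b_1}_{S_p}\norm{t_1}_{X_n}$, and $\norm{t_1}_{X_n}=\norm{e_1}_{R_p+_pC_p}=\inf_s(|s|^p+|1-s|^p)^{1/p}=2^{1/p-1}<1$ for $p>1$, so the constant your corrected argument produces is sharp, and the statement with constant $1$ (like the paper's claim at the corresponding primal step that $\overline{\mathrm{span}}\{e_{j1}\otimes e_j\}\cong C_p$ completely isometrically, which is subject to the same $\ell_p+_pC_p$ versus $C_p$ discrepancy) holds only at $p=1$. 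The loss is a universal constant and is harmless for Proposition \ref{prop-block-oss} and everything downstream, but you should either restrict your norming-functional computation to $p=1$ or carry the factor $2^{1/p-1}$ explicitly rather than assert $\norm{\Phi}=1$.
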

\begin{proof}
Consider a finitely supported $(b_j)_{j\geq 1} \subseteq S_p$. Then we have
$$\norm{\sum_{j\geq 1} b_j \otimes e_{j1} \otimes t_j}_{S_p(C_p(X_n))}
\geq \norm{\sum_{j\geq 1} b_j \otimes e_{j1} \otimes e_j}_{S_p(C_p(R_p +_p C_p))}.$$
Note that $C_p(R_p +_p C_p) \cong C_p(R_p) +_p C_p(C_p) \cong S_p +_p C_p$ completely isometrically.
Since the formal identities $\ell_1 \rightarrow C_1$ and $\ell_2 \rightarrow C_2$ are complete contractions so is
$\ell_p \rightarrow C_p$ by complex interpolation. Since $\overline{\text{span}}\{e_{j1}\otimes e_j\}_{j\geq 1}$
correspond to $\ell_p$ and $C_p$ in $S_p$ and $C_p(C_p)$, respectively, we have
$$\overline{\text{span}}\{e_{j1}\otimes e_j\}_{j\geq 1} (\subseteq C_p(R_p +_p C_p))\cong C_p$$ completely isometrically.
Thus, we have $$\norm{\sum_{j\geq 1} b_j \otimes e_{j1} \otimes t_j}_{S_p(C_p(X_n))}
= \norm{\sum_{j\geq 1} b_j \otimes e_{j1}}_{S_p(C_p)}.$$
\end{proof}

\subsection{$X_{C_p}$ is a nontrivial weak-$C_p$ space}\label{subsec-weak-Cp}

\begin{prop}\label{prop-Yn}
For $n\in \mathbb{N}$ we consider $$Y_n = \overline{\text{\rm span}}\{t_i\}_{i\geq n+1} \subseteq X_{C_p}.$$
Then for any $E\subseteq Y_n$  with $\text{dim}E = n$, we have
$$d_{cb}(E, C^n_p) \leq 3\theta^{-1}.$$
\end{prop}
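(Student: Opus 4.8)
The plan is to exhibit a single completely bounded isomorphism $u:C^n_p\to E$ and control $\norm{u}_{cb}$ and $\norm{u^{-1}}_{cb}$ separately, so that $d_{cb}(E,C^n_p)\le\norm{u}_{cb}\norm{u^{-1}}_{cb}$. Since $X_{C_p}$ is Hilbertian, $E\cong\ell^n_2$ isometrically, so I would fix an orthonormal basis $z_1,\dots,z_n$ of $E$ and let $u$ be the map $e_{i1}\mapsto z_i$. Writing $z_i=\sum_l\zeta_{il}t_l$ with support in $\{n+1,n+2,\dots\}$, orthonormality of the $z_i$ says exactly that the matrix $(\zeta_{il})$ has orthonormal rows. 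The upper estimate is then immediate: for $x=\sum_i b_i\otimes z_i=\sum_l(\sum_i\zeta_{il}b_i)\otimes t_l$, Proposition \ref{prop-Hilbertian-Cp} gives that the formal identity $C_p\to X_{C_p}$ is completely contractive, and expanding in the canonical basis together with row-orthonormality yields $\norm{x}_{S_p(X_{C_p})}\le\norm{\sum_i b_i\otimes e_{i1}}_{S_p(C_p)}$; by \eqref{Sp-cb-check} this is precisely $\norm{u}_{cb}\le 1$. It remains to prove the lower estimate $\norm{\sum_i b_i\otimes z_i}_{S_p(X_{C_p})}\ge\frac{\theta}{3}\norm{(\sum_i b_i^*b_i)^{1/2}}_{S_p}$, equivalently $\norm{u^{-1}}_{cb}\le 3\theta^{-1}$.

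For the lower estimate the idea is to reduce a general (non-block) subspace to the block situation already settled in Proposition \ref{prop-block-oss}. After a routine reduction to finite support, assign to each occupied coordinate $l\in\{n+1,\dots\}$ its profile $\zeta_{\cdot l}=(\zeta_{1l},\dots,\zeta_{nl})\in\ell^n_2$. I would fix an $\eps$-net $\{\eta_1,\dots,\eta_{f(n+1)}\}$ of the unit sphere of $\ell^n_2$ and let $E_j$ be the coordinates whose profile direction is closest to $\eta_j$. This is an allowable sequence precisely because $f(k)=(4k^3)^k$ is large enough to house a sphere net at the required scale. Inside each $E_j$ the profiles are nearly parallel to $\eta_j$, so up to an error controlled by $\eps$ one has $\sum_{l\in E_j}(\sum_i\zeta_{il}b_i)\otimes t_l\approx\hat b_j\otimes y_j$, where $\hat b_j=\sum_i\eta_{j,i}b_i$ and $y_j=\sum_{l\in E_j}\norm{\zeta_{\cdot l}}\,t_l$ is a disjoint block vector supported in $\{n+1,\dots\}$.

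With this reduction in hand I would apply Proposition \ref{prop-block-oss} (and, at its heart, Lemmas \ref{lem-block-lower-bound} and \ref{lem-emerging}) to the normalized blocks $y_j/\norm{y_j}$, obtaining $\norm{\sum_j\hat b_j\otimes y_j}_{S_p(X_{C_p})}\ge\theta\norm{(\sum_j\norm{y_j}^2\hat b_j^*\hat b_j)^{1/2}}_{S_p}$. Finally, because the weighted net directions reproduce the tight-frame identity coming from row-orthonormality, $\norm{y_j}^2\bar\eta_{j,i}\eta_{j,i'}\approx\sum_{l\in E_j}\bar\zeta_{il}\zeta_{i'l}$ and hence $\sum_j\norm{y_j}^2\hat b_j^*\hat b_j\approx\sum_l\bar\zeta_{\cdot l}\zeta_{\cdot l}$ recovers $B=\sum_i b_i^*b_i$. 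Collecting the net error with this comparison produces the numerical constant $3$, giving $\norm{u^{-1}}_{cb}\le 3\theta^{-1}$ and the proposition.

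The step I expect to be the genuine obstacle is the middle one: for a general $n$-dimensional $E$ the supports of the $z_i$ overlap, so restricting to a block $E_j$ scrambles all the coefficients $b_i$, and one must quantify precisely how far $\sum_{l\in E_j}(\sum_i\zeta_{il}b_i)\otimes t_l$ is from the rank-one tensor $\hat b_j\otimes y_j$. Controlling this deviation — both the within-block profile error and the passage from the homogeneous $R_p+_p C_p$ level up to genuine $C_p$ that underlies Lemma \ref{lem-emerging} — is exactly what the size of $f$ must pay for, and is the reason a discretization loss (the factor $3$ rather than $1$) seems unavoidable with this argument.
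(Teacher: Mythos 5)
Your overall strategy --- approximate an arbitrary $n$-dimensional $E\subseteq Y_n$ by a span of disjoint blocks and then invoke Proposition \ref{prop-block-oss} --- is the same as the paper's, and your upper estimate $\norm{u}_{cb}\le 1$ (formal identity $C_p\to X_{C_p}$ from Proposition \ref{prop-Hilbertian-Cp} composed with the complete isometry of $C_p$ induced by the row-orthonormal matrix $(\zeta_{il})$) is fine. But the step you yourself flag as ``the genuine obstacle'' is exactly the content of the proof, and your sketch of it does not close. The paper fills this gap by quoting Proposition V.6 of \cite{CS}: for a space with a $1$-unconditional basis there is a map $V:E\to Y_n$ with $V(E)$ contained in the span of at most $f(n)$ disjointly supported vectors and $\norm{Vf-f}\le\frac{1}{2n}\norm{f}$ for all $f\in E$ --- this is the entire reason for the choice $f(k)=(4k^3)^k$. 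One then takes an Auerbach basis of $E$ and applies the perturbation lemma (Lemma 2.13.2 of \cite{P3}) to get $d_{cb}(E,V(E))\le 3$, and only afterwards applies Proposition \ref{prop-block-oss} (together with the homogeneity of $C_p$, to pass to the $n$-dimensional subspace $V(E)$ of the block span) to get the remaining factor $\theta^{-1}$.

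Two concrete problems with your replacement argument. First, the counting: an $\eps$-net of the unit sphere of complex $\ell_2^n$ has cardinality of order $(1+2/\eps)^{2n}$, and for your error terms to be absorbed you need $\eps$ small compared with negative powers of $n$ and, worse, compared with $\theta$: you control the within-block deviation only by the $C_p$-norm from above, but you must subtract it from a lower bound that already carries the factor $\theta$. Since $f$ does not depend on $\theta$, for small $\theta$ the family $\{E_j\}$ you need is simply not allowable. The paper's route avoids this entirely, because Proposition V.6 of \cite{CS} measures the perturbation in the norm of $X_{C_p}$ itself (where $E$ is isometrically Euclidean) and the perturbation lemma converts a relative error of $\frac{1}{2n}$ directly into a cb-distance bound of $3$, with no reference to $\theta$ or to the $C_p$-upper bound. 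Second, the constant: you assert that ``collecting the net error \dots produces the numerical constant $3$,'' but nothing in your computation produces $3$; it produces factors of the form $(1-\delta(\eps,n))^{-1}$ whose uniformity in $n$ (within the allowable budget $f(n+1)$) is precisely what is unproven. So the missing ingredient is the off-the-shelf blocking lemma for unconditional bases together with the small-perturbation principle for $d_{cb}$; without these the argument is incomplete.
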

\begin{proof}
By Proposition V.6 of \cite{CS} there is a linear map $V : E \rightarrow Y_n$ such that $V(E) \subseteq \overline{\text{\rm span}}\{y_i\}^{f(n)}_{i=1}$,
where $y_i$'s are disjoint elements in $Y_n$ and $$\norm{Vf - f} \leq \frac{1}{2n}\norm{f}$$ for all $f\in E$.
Now we consider the Auerbach basis $(x_i , x^*_i)^{n}_{i=1}$ of $E$. Then we have
$$\sum^{3n}_{i=1}\norm{x^*_i}\norm{x_i -Vx_i} \leq \frac{1}{2n}\sum^{n}_{i=1}\norm{x_i} \leq \frac{1}{2},$$
which implies $$d_{cb}(E,V(E)) \leq 3$$ by the perturbation lemma (Lemma 2.13.2 of \cite{P3}).
On the other hand we have $$d_{cb}(\overline{\text{\rm span}}\{y_i\}^{f(n)}_{i=1}, C^{f(n)}_p) \leq \theta^{-1}$$
by Proposition \ref{prop-block-oss}. Consequently, by combining these we get our desired result.
\end{proof}

\begin{rem}{\rm
The employment of Proposition V.6 of \cite{CS} in the proof of Proposition \ref{prop-Yn} is the reason why we have chosen $f(k) = (4k^3)^k$.}
\end{rem}

Actually we can show that every $n$-dimensional subspace of $Y_n$ in Proposition \ref{prop-Yn} is completely complemented with bounded constants,
so that we are ready to prove one of our main results.

\begin{thm}\label{thm-main1-Cp}
$X_{C_p}$ is a weak-$C_p$ space.
\end{thm}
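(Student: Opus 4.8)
The plan is to verify the characterization of weak-$C_p$ spaces recalled in the introduction: given $0<\delta<1$ one must produce a constant $C$ so that every finite dimensional $F\subseteq X_{C_p}$ contains a subspace $F_1$ with $\dim F_1\geq \delta\dim F$, with $d^{C_p}_{F_1,cb}\leq C$, and with an onto projection $P:X_{C_p}\rightarrow F_1$ satisfying $\norm{P}_{cb}\leq C$. Since $C_p$ is perfectly Hilbertian, the various values of $\delta$ are tied together by the boosting mechanism of the theory of weak-$H$ spaces (the operator space analogue of Pisier's theorem that, for weak Hilbert spaces, a single value of $\delta$ already forces all of them, cf. \cite{L1,P0}); so it suffices to treat one fixed proportion, say $\delta=\tfrac12$, and I will concentrate on that.

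First I would upgrade Proposition \ref{prop-Yn} into a complementation statement. Fix $k$ and a subspace $G\subseteq Y_k$ with $m:=\dim G\leq k$. Because $Y_k\subseteq Y_m$, Proposition \ref{prop-Yn} applied with index $m$ gives $d^{C_p}_{G',cb}\leq 3\theta^{-1}$ for every $m$-dimensional $G'\subseteq Y_m$; thus $Y_m$ satisfies the hypothesis of Proposition \ref{prop-complemented} with $H=C_p$ and constant $3\theta^{-1}$, producing a projection $P_0:Y_m\rightarrow Y_m$ onto $G$ with $\gamma_{C_p}(P_0)\leq 3\theta^{-1}$, hence $\norm{P_0}_{cb}\leq \gamma_{C_p}(P_0)\leq 3\theta^{-1}$. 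By Proposition \ref{prop-basis-Cp} the canonical basis is $1$-completely unconditional, so the tail projection $Q_m:X_{C_p}\rightarrow Y_m$ annihilating $t_1,\dots,t_m$ is completely contractive and fixes $Y_m$. Then $P:=P_0Q_m$ is a projection of $X_{C_p}$ onto $G$ with $\norm{P}_{cb}\leq 3\theta^{-1}$, while simultaneously $d^{C_p}_{G,cb}\leq 3\theta^{-1}$. In short, every subspace of a tail $Y_k$ whose dimension does not exceed $k$ is $3\theta^{-1}$-completely complemented in $X_{C_p}$ and $3\theta^{-1}$-completely isomorphic to $C_p$.

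The remaining task is to locate, inside an arbitrary $F$ with $N:=\dim F$, a subspace of this type of dimension at least a fixed proportion of $N$. Here I would run a dimension count on $g(k):=\dim(F\cap Y_k)$. This function is non-increasing, drops by at most one at each step, has $g(0)=N$, and satisfies $g(k)\geq N-k$ since $Y_k$ has codimension $k$. Let $k_0$ be the first index with $g(k_0)\leq k_0$; then $g(k_0-1)\geq k_0$, so $g(k_0)\geq k_0-1$, and combining $g(k_0)\geq N-k_0$ with $g(k_0)\leq k_0$ forces $k_0\geq N/2$. Hence $F_1:=F\cap Y_{k_0}$ is a genuine subspace of $F$, lies in $Y_{k_0}$, has $\dim F_1=g(k_0)\leq k_0$, and $\dim F_1\geq k_0-1\geq N/2-1$. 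By the previous paragraph $F_1$ is $3\theta^{-1}$-completely complemented and $3\theta^{-1}$-completely isomorphic to $C_p$, which establishes the required property for the proportion $\tfrac12$ with $C=3\theta^{-1}$.

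The main obstacle is precisely the last upgrade, from a fixed proportion to all $\delta<1$. The tail estimate only complements subspaces whose dimension is at most their supporting index, so a single application cannot by itself pass the proportion $1/2$, and one cannot merely add up several good complemented pieces, since a union of disjoint blocks spilling across a long range of indices need no longer be close to $C_p$. What makes the amalgamation possible is that $f(k)=(4k^3)^k$ disjoint normalized blocks supported after index $k$ already span a $\theta$-isomorphic copy of $C_p$ (Proposition \ref{prop-block-oss}); exploiting this very generous block count together with the homogeneity of $C_p$ is what allows the weak-$H$ theory to convert the $\delta=\tfrac12$ statement into the full conclusion, and I expect this amalgamation to be the delicate point of the argument.
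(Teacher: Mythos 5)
Your proposal is correct and follows essentially the same route as the paper: the paper takes $E$ of dimension $2n$ or $2n+1$, picks an $n$-dimensional $F\subseteq E\cap Y_n$, applies Proposition \ref{prop-Yn} and Proposition \ref{prop-complemented} to get $d^{C_p}_{F,cb}\leq 3\theta^{-1}$ together with a $3\theta^{-1}$-completely bounded projection of $Y_n$ onto $F$, and composes with the completely contractive tail projection onto $Y_n$ supplied by the $1$-complete unconditionality of the basis. Like you, the paper only establishes the property for proportion roughly $\tfrac{1}{2}$ and leaves the passage to arbitrary $\delta<1$ to the general characterization of weak-$H$ spaces quoted from \cite{L2}, so the ``delicate amalgamation'' you anticipate in your last paragraph is not carried out in the paper either.
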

\begin{proof}
Let $Y_n$ be the same as in Proposition \ref{prop-Yn}. Consider any $E\subseteq X_{C_p}$ with $$\text{dim}E = 2n \; \text{or}\; 2n+1.$$
Then we have $\text{dim}(E\cap Y_n) \geq n$, so that there is $F \subseteq E$ such that $\text{dim}F = n$ and $F \subseteq Y_n$.
Then by Proposition \ref{prop-Yn} we have $$d^H_{F,cb} \leq 3\theta^{-1}.$$
Moreover, $F$ is $3\theta^{-1}$-completely complemented in $Y_n$ by Proposition \ref{prop-complemented}.
Since $Y_n$ itself is $1$-completely complemented in $X_{C_p}$ so is $F$, which implies $X_{C_p}$ is a weak-$C_p$ space.
\end{proof}

All we have to do now is to show that $X_{C_p}$ is not completely isomorphic to $C_p$.
Before that we need to prepare the following lemmas which are analogues of Lemma 13.6 and 13.7 in \cite{P1}.

\begin{lem}\label{lem1-Cp}
Let $N \in \mathbb{N}$ be fixed. Then, for any $n\geq 0$ and any $y,z\in C_p \otimes X_{C_p}$ with
$$\text{\rm supp}y \subseteq \{1, 2, \cdots, N\}\; \text{and}\;\, \text{\rm supp}z \subseteq \{N+1, N+2, \cdots \}$$ we have
$$\norm{y+z}_{C_p \otimes_{\min}X_{n+1}} \leq
\max\{\norm{y}_{C_p \otimes_{\min}X_{n+1}} + \alpha \norm{z}_{C_p \otimes_{\min}X_n},\; \norm{z}_{C_p \otimes_{\min}X_{n+1}}\},$$
where $\alpha = \max\{1, \theta f(N)\}$.
\end{lem}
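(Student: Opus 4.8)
The plan is to induct on $n$, driving everything by the recursive description of the norm. First I would record the structural fact that, since $X_{n+1}$ embeds completely isometrically into $X_n \oplus_{\infty} \ell_{\infty}(I;C_p(X_n))$ via $x \mapsto (x,(\theta E_j x)_{(E_j)\in I})$ and $\otimes_{\min}$ is injective and functorial, for every finitely supported $w$ one has
$$\norm{w}_{C_p\otimes_{\min}X_{n+1}} = \max\Big\{\norm{w}_{C_p\otimes_{\min}X_n},\ \theta\sup_{(E_j)}\Big\|\sum_{j=1}^{f(k)} e_{j1}\otimes E_j w\Big\|_{C_p\otimes_{\min}C_p(X_n)}\Big\},$$
the supremum running over allowable sequences. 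By density (all the $\norm{\cdot}_m$ are dominated by the ambient $X_{C_p}$-norm) it suffices to treat finitely supported $y,z$, and it is enough to bound each of the two terms on the right by the claimed maximum $M:=\max\{\norm{y}_{n+1}+\alpha\norm{z}_n,\ \norm{z}_{n+1}\}$.

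For the first term I would invoke the inductive hypothesis. When $n\geq 1$, the lemma at level $n-1$ gives $\norm{y+z}_n \leq \max\{\norm{y}_n+\alpha\norm{z}_{n-1},\ \norm{z}_n\}$, and monotonicity of $(\norm{\cdot}_m)_m$ (Proposition \ref{prop-Hilbertian-Cp}) together with $\alpha\geq 1$ upgrades the right side to $M$. The base case $n=0$ is handled directly: $\norm{y+z}_0\leq \norm{y}_0+\norm{z}_0\leq \norm{y}_1+\alpha\norm{z}_0\leq M$, using the triangle inequality in $R_p+_pC_p$, monotonicity, and $\alpha\geq 1$.

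The substantial term is the supremum, and here no induction is needed. Fixing an allowable $(E_j)_{j=1}^{f(k)}$ of index $k$, I would split $E_j = E_j'\sqcup E_j''$ with $E_j'=E_j\cap\{1,\dots,N\}$ and $E_j''=E_j\cap\{N+1,N+2,\dots\}$. Since $\operatorname{supp}y\subseteq\{1,\dots,N\}$ and $\operatorname{supp}z\subseteq\{N+1,\dots\}$, we have $E_j(y+z)=E_j'y+E_j''z$, so the triangle inequality gives
$$\theta\Big\|\sum_j e_{j1}\otimes E_j(y+z)\Big\| \leq \theta\Big\|\sum_j e_{j1}\otimes E_j'y\Big\| + \theta\Big\|\sum_j e_{j1}\otimes E_j''z\Big\|.$$
If $k>N$, then every $E_j'=\varnothing$, the first summand vanishes, $(E_j'')$ is allowable, and the second is $\leq\norm{z}_{n+1}$. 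If $k\leq N$, then $(E_j')_{j=1}^{f(k)}$ is itself allowable (its members are disjoint and lie in $\{k,\dots\}$), so the first summand is $\leq\norm{y}_{n+1}$; for the second I would use the crude estimate $\|\sum_j e_{j1}\otimes E_j''z\|\leq\sum_{j=1}^{f(k)}\norm{E_j''z}_n$ (the cross-norm property and $\norm{e_{j1}}_{C_p}=1$), then $\norm{E_j''z}_n\leq\norm{z}_n$ from the $1$-complete unconditionality of $\{t_i\}$ (Proposition \ref{prop-basis-Cp}), giving $\theta f(k)\norm{z}_n\leq\theta f(N)\norm{z}_n\leq\alpha\norm{z}_n$ since $f$ is increasing. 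In both cases the contribution is $\leq M$, and taking the supremum over $(E_j)$ closes the term.

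The place needing care is precisely the origin of $\alpha=\max\{1,\theta f(N)\}$: the brutal $\ell_1$-type bound on the $z$-part, where as many as $f(k)\leq f(N)$ of the sets $E_j''$ may be nonempty. The point is that this crude count only multiplies $\norm{z}_n$ (never $\norm{z}_{n+1}$), while the ``expensive'' part of the supremum, $k>N$, never sees $y$ at all. So the only genuinely delicate step is the bookkeeping: verifying that $(E_j')$ and $(E_j'')$ are separately allowable in the respective regimes and that the index $k$ dichotomizes cleanly at $N$.
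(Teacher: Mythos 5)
Your proof is correct and takes essentially the same route as the paper's: split according to whether the $(n+1)$-norm is realized by the $n$-norm or by an allowable-sequence supremum, dichotomize the allowable sequences at $k>N$ versus $k\leq N$, and control the $z$-part in the latter case by the crude $\ell_1$-estimate $\theta\sum_j\norm{E_j z}_n\leq\theta f(k)\norm{z}_n\leq\theta f(N)\norm{z}_n$, which is exactly where $\alpha$ comes from. The only (harmless) deviation is that you dispatch the first branch by induction on $n$, whereas the paper simply uses the triangle inequality $\norm{y+z}_n\leq\norm{y}_n+\norm{z}_n\leq\norm{y}_{n+1}+\alpha\norm{z}_n$ together with monotonicity of the norms.
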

\begin{proof}
Let us fix $n\geq 0$ and $y, z\in C_p \otimes X_{C_p}$ with
$$\text{supp}y \subseteq \{1, 2, \cdots, N\}\; \text{and}\;\, \text{supp}z \subseteq \{N+1, N+2, \cdots \}.$$
Let $T^y$, $T^z$ and $T^{y+z}$ are linear maps from $R_p$ into $X_{C_p}$ associated with $y$, $z$ and $y+z$, respectively.

Then we have $$\norm{y+z}_{C_p \otimes_{\min}X_{n+1}} = \norm{T^{y+z} : R_p \rightarrow X_{n+1}}_{cb}.$$
For any $x = \sum_i x_i \otimes e_{1i} \in \K \otimes R_p$ we consider $\norm{T^{y+z}(x)}_{n+1}$.
If $$\norm{T^{y+z}(x)}_{n+1} = \norm{T^{y+z}(x)}_n,$$ then we have
\begin{align*}
\norm{T^{y+z}(x)}_{n+1} & \leq \norm{T^{y}(x)}_n + \norm{T^{z}(x)}_n\\
& \leq \Big(\norm{T^{y} : R_p \rightarrow X_n}_{cb} + \norm{T^{z} : R_p \rightarrow X_n}_{cb}\Big) \norm{x}_{\K \otimes R_p}\\
& \leq \Big(\norm{y}_{C_p \otimes_{\min}X_{n+1}} + \norm{z}_{C_p \otimes_{\min}X_n}\Big) \norm{x}_{\K \otimes R_p}.
\end{align*}

If not, we consider any ``allowable" sequence $\{E_j\}^{f(k)}_{j=1} \subseteq \mathbb{N}$.
When $k > N$, we have
\begin{align*}
\theta \norm{\sum^{f(k)}_{j=1} e_{j1} \otimes E_j[T^{y+z}(x)]}_{\K \otimes_{\min} C_p(X_n)}
& = \theta \norm{\sum^{f(k)}_{j=1} e_{j1} \otimes E_j[T^{z}(x)]}_{\K \otimes_{\min} C_p(X_n)}\\
& \leq \norm{T^{z}(x)}_{n+1} \leq \norm{z}_{C_p \otimes_{\min}X_{n+1}}\norm{x}_{\K \otimes R_p}.
\end{align*}
Otherwise, we have
\begin{align*}
\lefteqn{\theta \norm{\sum^{f(k)}_{j=1} e_{j1} \otimes E_j[T^{y+z}(x)]}_{\K \otimes_{\min} C_p(X_n)}}\\
& = \theta \norm{\sum^{f(k)}_{j=1} e_{j1} \otimes E_j[T^{y}(x)]}_{\K \otimes_{\min} C_p(X_n)}
+ \theta \norm{\sum^{f(k)}_{j=1} e_{j1} \otimes E_j[T^{z}(x)]}_{\K \otimes_{\min} C_p(X_n)}\\
& \leq \norm{T^y(x)}_{n+1} + \theta \sum^{f(k)}_{j=1}\norm{E_j[T^{z}(x)]}_n\\
& \leq \Big(\norm{y}_{C_p \otimes_{\min}X_{n+1}}+ \theta N \norm{z}_{C_p \otimes_{\min}X_n}\Big)\norm{x}_{\K \otimes R_p}.
\end{align*}
Combining the above results we get our desired estimate. 

\end{proof}

\begin{lem}\label{lem2-Cp}
For any $n\geq 0$ and any $x\in \K \otimes_{\min} X_{C_p}$ we have
\begin{equation}\label{decomposing}
\norm{x}_{\K \otimes_{\min} X_{C_p}} \leq \norm{(x,\theta^n x)}_{\K \otimes_{\min} (X_n \oplus_p C_p)}.
\end{equation}

\end{lem}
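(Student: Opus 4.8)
The plan is to pass to the finite stages and induct. Since $X_{C_p}$ sits completely isometrically inside $\ell_\infty(X_m)$ through the diagonal $x\mapsto(x,x,\dots)$, and the minimal tensor product commutes with $\ell_\infty$-direct sums, I would first record that
\begin{equation*}
\norm{x}_{\K\otimes_{\min}X_{C_p}}=\sup_{m}\norm{x}_{\K\otimes_{\min}X_m}.
\end{equation*}
Thus it suffices to prove, for every $m\geq 0$ and every $n\geq 0$, that $\norm{x}_{\K\otimes_{\min}X_m}\leq R$, where $R:=\norm{(x,\theta^n x)}_{\K\otimes_{\min}(X_n\oplus_p C_p)}$. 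I would do this by induction on $m$, establishing the bound for \emph{all} $n$ at once at each stage. The index $n=0$ needs no induction: by Proposition \ref{prop-Hilbertian-Cp} the formal identity $C_p\to X_m$ is completely contractive, and the second coordinate projection $X_0\oplus_p C_p\to C_p$ is completely contractive, so $\norm{x}_{\K\otimes_{\min}X_m}\leq\norm{x}_{\K\otimes_{\min}C_p}\leq R$. The base $m=0$ is likewise immediate, since $\norm{x}_{X_0}\leq\norm{x}_{X_n}$ and the first coordinate projection is contractive.

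For the inductive step $m\to m+1$ with $n\geq 1$, I would use the amplified form of the defining recursion (legitimate because $X_{m+1}\subseteq X_m\oplus_\infty\ell_\infty(I;\{C_p(X_m)\})$ and $\min$ commutes with $\ell_\infty$-sums),
\begin{equation*}
\norm{x}_{\K\otimes_{\min}X_{m+1}}=\max\Big\{\norm{x}_{\K\otimes_{\min}X_m},\ \theta\sup\norm{\textstyle\sum_{j=1}^{f(k)}e_{j1}\otimes E_j x}_{\K\otimes_{\min}C_p(X_m)}\Big\}.
\end{equation*}
The first term is $\leq R$ by the induction hypothesis at stage $m$ with index $n$. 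The genuine obstacle is the second term, which I must bound by $R$ uniformly over all allowable $\{E_j\}_{j=1}^{f(k)}$.

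\textbf{The key step.} I would apply the induction hypothesis at stage $m$ but with index $n-1$: this says exactly that $\Phi:X_m\to X_{n-1}\oplus_p C_p$, $z\mapsto(z,\theta^{n-1}z)$, is bounded below by $1$ completely, i.e. $\Phi^{-1}$ on its range is a complete contraction. By \eqref{cb-extension} so is $I_{C_p}\otimes\Phi^{-1}$, which gives
\begin{equation*}
\norm{\textstyle\sum_j e_{j1}\otimes E_j x}_{C_p(X_m)}\leq\norm{\textstyle\sum_j e_{j1}\otimes(E_j x,\theta^{n-1}E_j x)}_{C_p(X_{n-1}\oplus_p C_p)}.
\end{equation*}
Using $C_p(X_{n-1}\oplus_p C_p)=C_p(X_{n-1})\oplus_p C_p(C_p)$ from \eqref{Sp-sum}, the right-hand element is the pair $(A,\theta^{n-1}B)$ with $A=B=\sum_j e_{j1}\otimes E_j x$ read in $C_p(X_{n-1})$ and in $C_p(C_p)$ respectively, so multiplication by $\theta$ yields $(\theta A,\theta^n B)$. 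Now $\theta A=\Lambda(x)$ for the complete contraction $\Lambda:X_n\to C_p(X_{n-1})$, $z\mapsto\theta\sum_j e_{j1}\otimes E_j z$ (this is precisely the operation defining the $X_n$-norm on the allowable family $\{E_j\}$), and $B=M(x)$ for the complete contraction $M:C_p\to C_p(C_p)$, $z\mapsto\sum_j e_{j1}\otimes E_j z$ (completely contractive by disjointness of the $E_j$, exactly as in the computation inside Proposition \ref{prop-Hilbertian-Cp} and in Lemma \ref{lem-emerging}). Since the $\ell_p$-direct sum of complete contractions is a complete contraction, $\Lambda\oplus M$ sends $(x,\theta^n x)$ to $(\theta A,\theta^n B)$, whence $\theta\norm{\sum_j e_{j1}\otimes E_j x}_{C_p(X_m)}\leq\norm{(\theta A,\theta^n B)}\leq R$. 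Combining the two terms gives $\norm{x}_{\K\otimes_{\min}X_{m+1}}\leq R$, closing the induction.

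The delicate point, and the reason the argument is not circular, is the choice to invoke the hypothesis at index $n-1$ rather than $n$: the leading $\theta$ from the recursion then fuses with the $\theta^{n-1}$ produced by $\Phi$ to give exactly $\theta^n$ on the $C_p$-component, while the $X_{n-1}$-components are reassembled into $\norm{x}_{X_n}$ through the very recursion defining the $X_n$-norm. I expect verifying the two complete contractions $\Lambda$ and $M$ and the splitting of $C_p(X_{n-1}\oplus_p C_p)$ to be routine, so the only real content is this bookkeeping of the $\theta$-powers against the stage index.
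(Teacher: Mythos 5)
Your proof is correct, and its computational core coincides with the paper's: the splitting $C_p(X_{n-1}\oplus_p C_p)=C_p(X_{n-1})\oplus_p C_p(C_p)$ via \eqref{Sp-sum}, the complete contraction $z\mapsto\theta\sum_j e_{j1}\otimes E_jz$ from the stage above into $C_p$ of the stage below, the complete contraction $C_p\to C_p(C_p)$ coming from disjointness of the $E_j$, and the fusion of the leading $\theta$ with the accumulated power. What differs is only the induction scaffolding. The paper inducts on $n$ with the limit space fixed on the left: the hypothesis at $n$ is read as a complete contraction $F\to X_{C_p}$ for the graph $F\subseteq X_n\oplus_p C_p$, its $C_p$-amplification bounds $\theta\sup\norm{\sum_j e_{j1}\otimes E_jx}_{\K\otimes_{\min}C_p(X_{C_p})}$ by $\norm{(x,\theta^{n+1}x)}_{\K\otimes_{\min}(X_{n+1}\oplus_p C_p)}$, and Proposition \ref{prop-another-def-Cp}, passed to the limit, closes the step without ever descending to a finite stage. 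You instead induct on the stage $m$, carrying all $n$ at once and shifting to index $n-1$ inside the recursion; this trades the self-referential identity for the limit norm (which you do not need) against the sup formula $\norm{x}_{\K\otimes_{\min}X_{C_p}}=\sup_m\norm{x}_{\K\otimes_{\min}X_m}$ and the double-indexed bookkeeping. Both routes are sound and of comparable length; yours has the mild advantage of never invoking Proposition \ref{prop-another-def-Cp}, the paper's of running a single-variable induction.
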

\begin{proof}
We will use induction on $n$ to show (\ref{decomposing}) for all $x\in \K_0 \otimes c_{00}$.

When $n=0$, it is trivial. Suppose we have (\ref{decomposing}) for all $x\in \K_0 \otimes c_{00}$ and for a fixed $n \geq 0$,
which is equivalent to the fact that the map $$(x, \theta^n x) \mapsto x, \; F \rightarrow X_{C_p}$$ is a complete contraction,
where $F = \overline{\text{span}}\{(y,\theta^n y) : y \in c_{00}\} \subseteq X_n \oplus_p C_p$.
Then, for any ``allowable" sequence $\{E_j\}^{f(k)}_{j=1} \subseteq \mathbb{N}$ we have another complete contraction
$$e_{j1} \otimes (x, \theta^n x) \mapsto e_{j1} \otimes x, \,\, C_p(F) \rightarrow C_p(X_{C_p}).$$
Thus, we have
\begin{align*}
\lefteqn{\theta\norm{\sum^{f(k)}_{j=1}e_{j1} \otimes E_j x}_{\K \otimes_{\min} C_p(X_{C_p})}} \\
& \leq \norm{\sum^{f(k)}_{j=1}e_{j1} \otimes (\theta E_j x, \theta^{n+1}E_j x)}_{\K \otimes_{\min} C_p(X_n \oplus_p C_p)} \\
&=\norm{\Big(\sum^{f(k)}_{j=1}e_{j1}\otimes\theta E_j x,
\sum^{f(k)}_{j=1}e_{j1}\otimes\theta^{n+1}E_j x\Big)}_{\K \otimes_{\min} [C_p(X_n) \oplus_p C_p(C_p)]}\\
& \leq \norm{(x,\theta^{n+1} x)}_{\K \otimes_{\min} (X_{n+1} \oplus_p C_p)}
\end{align*}
The last line follows from the fact that the maps
$$x\mapsto \theta \sum^{f(k)}_{j=1}e_{j1}\otimes E_j x, \; X_{n+1} \rightarrow C_p(X_n)$$ and
$$x\mapsto \sum^{f(k)}_{j=1}e_{j1}\otimes E_j x, \; C_p \rightarrow C_p(C_p)$$ are complete contractions.

Since $$\norm{x}_{m+1} = \max\Big\{\norm{x}_0, \theta \sup \norm{\sum^{f(k)}_{j=1}e_{j1}\otimes E_j x}_{\K \otimes_{\min} C_p(X_m)}\Big\}$$
for all $m \geq 0$ by Proposition \ref{prop-another-def-Cp}, we have
$$\norm{x} \leq \max\Big\{\norm{x}_0, \theta \sup \norm{\sum^{f(k)}_{j=1}e_{j1}\otimes E_j x}_{\K \otimes_{\min} C_p(X_{C_p})}\Big\},$$
which leads us to our desired conclusion.
\end{proof}

The following proposition is the crucial point to explain why we cannot have complete isomorphism between $C_p$ and $X_{C_p}$.

\begin{prop}\label{prop-c0-Cp}
Let $n\geq 0$. Then, $C_p \otimes_{\min} E$ contains an isomorphic copy of $c_0$ for any infinite dimensional subspace $E \subseteq X_n$.
\end{prop}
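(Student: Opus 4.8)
My plan is to prove the statement by induction on $n$, exhibiting inside each $C_p\otimes_{\min}E$ a disjointly supported sequence of ``diagonal'' tensors equivalent to the unit vector basis of $c_0$. For the base case $n=0$ recall that $X_0=R_p+_pC_p=(R_p\oplus_pC_p)/(R_p\cap C_p)$, so that for every finitely supported $u$ the row and column lifts represent the same class, $[(u,0)]=[(0,u)]$. Given an orthonormal sequence $(u_k)$ in $E$, I lift each $u_k$ to its row representative; the map $e_{1k}\mapsto\lambda_k u_k$ then factors completely boundedly through $R_p$, and by homogeneity of $R_p$ a unitary carries $(u_k)$ onto the canonical basis, whence $\norm{\sum_k\lambda_k e_{k1}\otimes u_k}_0\le\norm{\sum_k\lambda_k e_{k1}\otimes e_{1k}}_{C_p\otimes_{\min}R_p}=\sup_k\abs{\lambda_k}$, the last equality being that the diagonal of $C_p\otimes_{\min}R_p$ is isometrically the $c_0$-basis. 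The reverse inequality is the completely contractive rank-one orthogonal projection onto $u_{k_0}$ (using homogeneity and \eqref{cb-extension}). This produces a $1$-$c_0$-basis of the required diagonal shape.

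For the inductive step fix an infinite dimensional $E\subseteq X_n$. Using Proposition V.6 of \cite{CS} and the perturbation lemma I reduce to normalized, disjointly supported block vectors, and I build $(w_k)$ greedily and far out. Suppose $w_1,\dots,w_{k-1}$ have supports in $\{1,\dots,M_{k-1}\}$. Applying the induction hypothesis to $E$ regarded inside $X_{n-1}$ furnishes disjoint far-out elements $v_1,v_2,\dots\in C_p\otimes E$, normalized in $C_p\otimes_{\min}X_{n-1}$, with $\norm{\sum_{i\le L}v_i}_{n-1}\le C$ for all $L$ (the all-ones $c_0$-estimate). Writing $V_L=\sum_{i\le L}v_i$ and choosing the allowable partition along the blocks, the definition of $\norm{\cdot}_n$ gives $\norm{V_L}_n\ge\theta\norm{\sum_i e_{i1}\otimes v_i}_{C_p\otimes_{\min}C_p(X_{n-1})}$, a boosted norm that tends to $\infty$ with $L$ while $\norm{V_L}_{n-1}\le C$ stays bounded. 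Hence, normalizing $w_k=V_L/\norm{V_L}_n$ with $L$ large, I obtain $\norm{w_k}_n=1$, support beyond $M_{k-1}$, and $\norm{w_k}_{n-1}\le\delta_k$ with $\delta_k\le 2^{-k}/(\theta f(M_{k-1}))$.

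To assemble the basis I iterate Lemma \ref{lem1-Cp}, peeling off the highest-index block: with $y=\sum_{k<K}\lambda_k w_k$ and $z=\lambda_K w_K$ it gives $\norm{\sum_{k\le K}\lambda_k w_k}_n\le\max\{\norm{\sum_{k<K}\lambda_k w_k}_n+\theta f(M_{K-1})\abs{\lambda_K}\,\delta_K,\ \abs{\lambda_K}\}$, and the choice of $\delta_K$ bounds the correction by $2^{-K}\sup_k\abs{\lambda_k}$; telescoping yields $\norm{\sum_k\lambda_k w_k}_n\le\tfrac32\sup_k\abs{\lambda_k}$ independently of the number of terms. The matching lower bound $\sup_k\abs{\lambda_k}$ is the interval projection onto $\text{supp}\,w_{k_0}$, completely contractive by the $1$-complete unconditionality of $(t_i)$ (Proposition \ref{prop-basis-Cp}) together with \eqref{cb-extension}. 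Since the $w_k$ remain disjointly supported and far out, the induction closes.

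The main obstacle is the amplification $\norm{V_L}_n\to\infty$ against $\norm{V_L}_{n-1}\le C$: one must show the level-$n$ Tsirelson boost genuinely inflates a vector that the coarser norm keeps bounded. This is exactly where the non-homogeneous, column-like nature of $C_p$ is used, namely that the diagonal of $C_p\otimes_{\min}C_p$ grows like $L^{1/2}$ (there is no $c_0$ in that direction), and the delicate point is that this growth survives the tensoring with $X_{n-1}$ and can be quantified, in the spirit of Lemma \ref{lem-emerging} and Proposition \ref{prop-block-oss}. Matching it against the very fast $f(k)=(4k^3)^k$, so that each $\delta_k$ is attainable with a finite $L$ and the peeling telescopes, is the heart of the argument; the remaining steps are bookkeeping.
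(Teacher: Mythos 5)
Your base case is fine (a slightly more roundabout route than the paper's, which simply embeds the diagonal $e_{j1}\otimes e_j\mapsto e_{jj}$ into $B(\ell_2)$ and invokes homogeneity of $R_p+_pC_p$), and your assembly step via Lemma \ref{lem1-Cp} is exactly the paper's. But the inductive step has a genuine gap, and you have put your finger on it yourself: the entire construction rests on the claim that $\norm{V_L}_n\to\infty$ while $\norm{V_L}_{n-1}\le C$, and you offer no proof of it, deferring it as ``the heart of the argument.'' The inequality $\norm{V_L}_n\ge\theta\norm{\sum_i e_{i1}\otimes v_i}_{C_p\OTm C_p(X_{n-1})}$ is correct (for an allowable partition along the blocks), but the right-hand side involves the coefficient tensors of the $v_i$, which the induction hypothesis gives you no control over; your heuristic about the diagonal of $C_p\OTm C_p$ growing like $L^{1/2}$ only addresses one very special configuration of coefficients. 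Worse, the claim can simply fail: if the norms of $C_p\OTm X_{n}|_{F'}$ and $C_p\OTm X_{n-1}|_{F'}$ happen to be equivalent on some infinite-dimensional $F'\subseteq F$ (nothing in the construction rules this out), then any $c_0$-sequence produced at level $n-1$ inside $F'$ has partial sums bounded in \emph{both} norms, your $\delta_k$'s cannot be achieved, and your induction stalls with no fallback.

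The missing idea is the dichotomy the paper uses. Either there is an infinite-dimensional $F'\subseteq F$ (in fact a tail $F\cap Y_N$) on which $C_p\OTm X_{n+1}$ and $C_p\OTm X_n$ are isomorphic --- in which case you do not need any inflation at all: the $c_0$-copy supplied by the induction hypothesis for $C_p\OTm X_n|_{F'}$ is already a $c_0$-copy for $C_p\OTm X_{n+1}|_{F'}$ --- or, for every tail, the two norms are inequivalent, and then the mere \emph{negation} of equivalence hands you, one at a time, finitely supported $x_{m+1}$ far out with $\norm{x_{m+1}}_{C_p\OTm X_{n+1}}=1$ and $\norm{x_{m+1}}_{C_p\OTm X_n}<\eps/(2^m\alpha)$, which is all that Lemma \ref{lem1-Cp} needs. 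This sidesteps entirely the quantitative inflation of a \emph{prescribed} sequence $V_L$ that your argument requires. As written, your proof proves the proposition only under an unverified (and possibly false) hypothesis on the subspace $E$; you should restructure the inductive step around the two alternatives rather than trying to force the second one.
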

\begin{proof} We will use induction on $n$. Consider $n=0$. Since
$$C_p \otimes_{\min}(R_p +_p C_p) \hookrightarrow B(\ell_2),\; e_{j1}\otimes e_j\mapsto e_{jj}$$
$(e_{j1}\otimes e_j)_{j \geq 1}$ is a basic sequence in $C_p \otimes_{\min}(R_p +_p C_p)$ equivalent to the canonical basis of $c_0$.
Since $R_p +_p C_p$ is homogeneous, every infinite dimensional subspace $E \subseteq R_p +_p C_p$ is completely isometric to $R_p +_p C_p$ itself.
Thus, we get the desired result for $n=0$.

Now suppose that $C_p \otimes_{\min} E$ contains an isomorphic copy of $c_0$ for any infinite dimensional subspace $E \subseteq X_n$.
Let $F \subseteq X_{n+1}$ be infinite dimensional, and $\epsilon>0$ be arbitrarily given.
We claim that there is an infinite dimensional subspace $F' \subseteq F$ such that $C_p \OTm F'$ and $C_p \OTm X_n|_{F'}$ are isomorphic
or we can choose a sequence $(x_i)_{i\geq 1} \subseteq C_p \OTm F$ satisfying $$\norm{x_i}_{C_p \OTm X_{n+1}} = 1$$ for all $i \geq 1$ and
$$\norm{\sum^M_{i=1}x_i}_{C_p \OTm X_{n+1}} \leq 1+ \epsilon$$ for all $M \geq 1$.
Both cases imply $C_p \OTm F$ contains an isomorphic copy of $c_0$, and consequently we get our desired induction result.

For the claim we start with a norm 1 vector $x_1 = e_{11}\otimes x\in C_p \OTm F$.
Suppose we have disjoint and finitely supported $x_1, \cdots, x_m \in C_p \OTm F$ with
$$\norm{x_i}_{C_p \OTm X_{n+1}} = 1$$ for all $1\leq i \leq m$ and
$$\norm{\sum^{m}_{i=1}x_i}_{C_p \OTm X_{n+1}} \leq 1 + \epsilon\sum^{m-1}_{i=1}\frac{1}{2^i}.$$
Let $N$ be a natural number such that $N \geq \text{supp}x_i$ for all $1\leq i \leq m$ and 
$$Y_N = \overline{\text{span}}\{e_i\}_{i\geq N+1} \subseteq X_{n+1}.$$
If $C_p \OTm F\cap Y_N$ and $C_p \OTm X_n|_{F\cap Y_N}$ are isomorphic, then it is done by the induction hypothesis.
Suppose $C_p \OTm F\cap Y_N$ and $C_p \OTm X_n|_{F\cap Y_N}$ are not isomorphic.
Then there is a finitely supported $$x_{m+1} \in C_p \OTm F\cap Y_N\; \text{with}\; \text{supp}x_{m+1} \subseteq \{N+1, N+2, \cdots \}$$
satisfying $$\norm{x_{m+1}}_{C_p \OTm X_{n+1}} = 1\; \text{and}\; \norm{x_{m+1}}_{C_p \OTm X_n} < \frac{\epsilon}{2^m\alpha},$$
where $\alpha = \max\{1, \theta f(N)\}$. Then by Lemma \ref{lem1-Cp} we have
$$\norm{\sum^{m+1}_{i=1}x_i}_{C_p \OTm X_{n+1}} \leq 1 + \epsilon\sum^{m}_{i=1}\frac{1}{2^i}.$$
By repeating this process we get our claim.
\end{proof}

\begin{thm}\label{thm-main2-Cp}
$X_{C_p}$ is not completely isomorphic to $C_p$.
\end{thm}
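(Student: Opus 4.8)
The plan is to argue by contradiction, reducing the statement to the presence or absence of an isomorphic copy of $c_0$ in a minimal tensor product, exactly as foreshadowed in the introduction. Suppose $X_{C_p}$ were completely isomorphic to $C_p$ via a complete isomorphism $u:X_{C_p}\to C_p$. Then $I_{C_p}\OTm u:C_p\OTm X_{C_p}\to C_p\OTm C_p$ is the amplification of a cb-isomorphism whose inverse $I_{C_p}\OTm u^{-1}$ is again cb, so it is bounded with bounded inverse and in particular a Banach space isomorphism. Since containing an isomorphic copy of $c_0$ is invariant under linear isomorphism, it suffices to prove that $C_p\OTm X_{C_p}$ contains a copy of $c_0$ while $C_p\OTm C_p$ does not; these two facts are incompatible with $C_p\OTm X_{C_p}\cong C_p\OTm C_p$.

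The second assertion is the routine ``model'' side. For $1\le p<2$ the space $C_p$ is reflexive, and from its vector valued Schatten class description recalled in Section \ref{sec-prelim} one checks that $C_p\OTm C_p$ is reflexive as well (at the endpoints it is just a Hilbertian space, and in general it has finite cotype). Consequently $C_p\OTm C_p$ contains no copy of $c_0$. I expect this step to be short and essentially formal.

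The real content is to produce a copy of $c_0$ inside $C_p\OTm X_{C_p}$; this is the limiting ($n=\infty$) analogue of Proposition \ref{prop-c0-Cp}, and the plan is to rerun its dichotomy argument one level higher, now with Lemma \ref{lem2-Cp} playing the role that Lemma \ref{lem1-Cp} played in the finite-stage induction. Starting from $F=X_{C_p}$ and a tolerance $\eps>0$, I would inductively construct a disjointly and finitely supported normalized sequence $(x_m)\subseteq C_p\OTm X_{C_p}$ with $\norm{\sum_{i=1}^{M}x_i}_{C_p\OTm X_{C_p}}\le 1+\eps$ for all $M$. Having fixed $x_1,\dots,x_m$ with supports below some $N$ and writing $Y_N=\overline{\text{span}}\{t_i\}_{i>N}$, the dichotomy is: either $C_p\OTm(F\cap Y_N)$ and $C_p\OTm X_n|_{F\cap Y_N}$ are isomorphic for some finite stage $n$, in which case Proposition \ref{prop-c0-Cp} already supplies a copy of $c_0$ and we are done; or they are non-isomorphic, which allows me to choose a far-out block $x_{m+1}$ with $\norm{x_{m+1}}_{C_p\OTm X_{C_p}}=1$ but $\norm{x_{m+1}}_{C_p\OTm X_{n}}$ arbitrarily small. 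The partial-sum estimate is then to be extracted from Lemma \ref{lem2-Cp}, which bounds the $X_{C_p}$-norm of $\sum_i x_i$ by its $X_n$-norm together with a $\theta^{n}$-weighted $C_p$-correction.

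The hard part, and the precise point where the limit genuinely differs from the finite-stage induction of Proposition \ref{prop-c0-Cp}, is controlling this correction term uniformly. In the finite setting Lemma \ref{lem1-Cp} charges only the newly added block, so the partial sums telescope cleanly and the bound $1+\eps\sum 2^{-i}$ propagates for free; at the limit, Lemma \ref{lem2-Cp} produces a term $\theta^{n}\norm{\sum_i x_i}_{C_p\OTm C_p}$ whose $C_p$-factor forces $\ell_2$-type growth in the number of blocks, so a single fixed descent level $n$ will not survive letting $M\to\infty$. The plan to defeat this is to let the descent level $n=n_m$ grow along the construction while pushing the supports of the $x_m$ far enough out that every allowable family meeting them is forced to have index $k>N_m$; the combinatorial weight $f(k)=(4k^3)^k$ together with the factor $\theta$ then makes each successive correction summable against $\eps/2^m$. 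Balancing the three quantities — the decay $\theta^{n_m}$, the smallness of $\norm{x_{m+1}}_{X_{n_m}}$, and the admissibility threshold $N_m$ — so that $\norm{\sum_{i=1}^{M}x_i}\le 1+\eps$ holds for every partial sum simultaneously is, I expect, the main obstacle of the proof.
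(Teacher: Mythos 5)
Your reduction to the presence or absence of $c_0$ is the right theme, and your ``model side'' ($C_p \OTm C_p$ embeds isometrically into $S_{\frac{2p}{2-p}}$, which is reflexive, hence contains no copy of $c_0$) agrees with the paper. But the core of your plan --- producing a copy of $c_0$ inside the \emph{limit} space $C_p \OTm X_{C_p}$ --- is a genuine gap, and you have in effect flagged it yourself: the correction term $\theta^{pn}\norm{\sum_{i\le M}x_i}^p_{S_p(C_p)}$ coming from Lemma \ref{lem2-Cp} grows without bound in $M$ for any fixed $n$, and your proposed fix (letting the descent level $n_m$ grow along the construction) is neither carried out nor obviously closable: Lemma \ref{lem2-Cp} compares the full $X_{C_p}$-norm with the $X_n$-norm of the \emph{entire} partial sum, so making the newly added block small in $X_{n_m}$ does not control the previously chosen blocks at level $n_m$, and there is no analogue of the one-block-at-a-time bookkeeping that Lemma \ref{lem1-Cp} provides at a fixed finite stage. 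Whether $C_p\OTm X_{C_p}$ contains $c_0$ at all is established nowhere in the paper, and the actual proof never needs it.

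The paper's argument avoids the limit entirely by running your two ingredients in the opposite order. Assuming $X_{C_p}$ is $C$-completely isomorphic to $C_p$, a homogeneity argument (as in Proposition 10.1 of \cite{P3}) forces the \emph{formal identity} $I:X_{C_p}\to C_p$ to satisfy $\norm{I}_{cb}\norm{I^{-1}}_{cb}\le (C+\eps)^3$; feeding this into Lemma \ref{lem2-Cp} in the form $\norm{x}^p_{S_p(X_{C_p})}\le \norm{x}^p_{S_p(X_n)}+\theta^{pn}\norm{x}^p_{S_p(C_p)}$ and choosing $n$ with $\theta^{pn}<(C+\eps)^{-3p}/2$ shows that already the \emph{finite} stage $X_n$ is completely isomorphic to $C_p$ with controlled constant. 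Then $C_p\OTm X_n$ embeds isomorphically into the reflexive space $S_{\frac{2p}{2-p}}$ and so cannot contain $c_0$, contradicting Proposition \ref{prop-c0-Cp} exactly as stated --- no passage to the limit in the $c_0$-construction is required. To repair your write-up, replace your third and fourth paragraphs by this descent to a finite $n$; the dichotomy argument you describe is precisely the content of Proposition \ref{prop-c0-Cp}, which you may simply quote.
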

\begin{proof}
Suppose that $X_{C_p}$ is $C$-completely isomorphic to $C_p$ for some $C>0$.
Then $X_{C_p}$ is $(C+\epsilon)$-homogeneous for any $\epsilon >0$.
Thus, by repeating the proof of Proposition 10.1 in \cite{P3} to $X_{C_p}$ and $C_p$ we get
$$\norm{I}_{cb} = \norm{I}_{cb}\norm{I^{-1}}_{cb} \leq (C+\epsilon)^2 d_{cb}(X_{C_p}, C_p) \leq (C+\epsilon)^3,$$
where $I : X_{C_p} \rightarrow C_p$ is the formal identity. By Lemma \ref{lem2-Cp} and \eqref{Sp-sum} we have
\begin{align*}
(C+\epsilon)^{-3p} \norm{x}^p_{S_p(C_p)} & \leq \norm{x}^p_{S_p(X_{C_p})} \leq \norm{(x,\theta^n x)}^p_{S_p(X_n \oplus_p C_p)}\\
& = \norm{x}^p_{S_p(X_n)} + \theta^{pn}\norm{x}^p_{S_p(C_p)}
\end{align*}
for any $x \in \K_0 \otimes c_{00}.$

If we choose $n$ large enough so that $\theta^{pn} < \frac{(C+\epsilon)^{-3p}}{2}$, then we get
$$\frac{(C+\epsilon)^{-3p}}{2} \norm{x}^p_{S_p(C_p)} \leq \norm{x}^p_{S_p(X_n)}.$$
Consequently, we have $$d_{cb}(X_n, C_p) \leq \frac{(C+\epsilon)^3}{2^{\frac{1}{p}}}.$$
However, since $C_p \OTm C_p \subseteq CB(R_p, C_p) \cong S_{\frac{2p}{2-p}}$ isometrically
and $S_{\frac{2p}{2-p}}$ is a reflexive Banach space with a basis $C_p \OTm X_n$ does not contain any isomorphic copy of $c_0$,
which is contradictory to Proposition \ref{prop-c0-Cp}.
\end{proof}

\begin{rem}{\rm
Note that if a weak-$H$ space is a homogeneous Hilbertian space, then it is completely isomorphic to $H$ itself by Proposition 3.8. of \cite{L1}.
Thus, $X_{C_p}$ is not homogeneous.
}
\end{rem}

\section{The case $p = 2$}\label{OH}

\subsection{The construction and basic properties of the canonical basis}

We will construct $X_{OH}$, an example of nontrivial weak-$OH$ space, in a similar way.
Many arguments used in section \ref{Cp} still work for $OH$ case also,
so that we only provide the proofs which we need to approach in a significantly different way.

Consider a fixed constant $0<\theta <1$. For $x \in \K_0 \otimes c_{00}$ we define
$$\norm{x}_0 = \norm{x}_{\K \otimes_{\min} (\min\ell_2)}$$
and for $n\geq 0$
$$\norm{x}_{n+1} = \max\Big\{\norm{x}_n, \theta \sup \norm{(E_j x)^{f(k)}_{j=1}}_{\K \otimes_{\min} \ell^{f(k)}_2 (X_n)}\Big\},$$
where the inner supremum runs over all ``allowable" sequence $\{E_j\}^{f(k)}_{j=1} \subseteq \mathbb{N}$.
As before we denote the completion of $(c_{00},\norm{\cdot}_n)$ by $X_n$ and
$X_{n+1}$ inherits the operator space structure from $X_n \oplus_{\infty} \ell_{\infty}(I; \{\ell^{f(k)}_2(X_n)\})$,
where $I$ is the collection of all allowable sequences.

\begin{prop}\label{prop-Hilbertian-OH}
For any $x \in \K_0 \otimes c_{00}$, $(\norm{x}_n)_{n\geq 0}$ is increasing, and we have
$$\norm{x}_{\K \otimes_{\min}(\min \ell_2)} \leq \norm{x}_n \leq \norm{x}_{\K \otimes_{\min} OH}$$
for all $n \geq 0$.
\end{prop}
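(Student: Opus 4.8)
The plan is to follow the blueprint of Proposition \ref{prop-Hilbertian-Cp} almost verbatim, replacing the column machinery by the self-dual structure of $OH$. The monotonicity of $(\norm{x}_n)_{n\geq 0}$ is immediate from the $\max$ in the recursive definition, and the left-hand inequality $\norm{x}_{\K \otimes_{\min}(\min \ell_2)} = \norm{x}_0 \leq \norm{x}_n$ is then read off from this monotonicity. For the right-hand inequality I would argue by induction on $n$, the assertion at level $n$ being equivalent to the statement that the formal identity $OH \to X_n$ is completely contractive. The base case $n=0$ holds because the formal identity $OH \to \min\ell_2$ is completely contractive, $\min\ell_2$ carrying the minimal operator space structure on the underlying Hilbert space.

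For the inductive step, assume the identity $OH \to X_n$ is completely contractive. By the definition of $\norm{\cdot}_{n+1}$ it suffices to show that for every allowable sequence $\{E_j\}^{f(k)}_{j=1}$ the spreading map $x \mapsto \theta (E_j x)^{f(k)}_{j=1}$, viewed as a map $OH \to \ell^{f(k)}_2(X_n)$, is completely contractive; combining this with the inductive hypothesis exactly as in Proposition \ref{prop-Hilbertian-Cp} yields the complete contractivity of $OH \to X_{n+1}$. By functoriality of the vector-valued construction (the $OH$-analogue of \eqref{cb-extension}), the inductive hypothesis promotes to a complete contraction $\ell^{f(k)}_2(OH) \to \ell^{f(k)}_2(X_n)$, so the whole matter reduces to the scalar-valued estimate $\theta\,\norm{(E_j x)^{f(k)}_{j=1}}_{\K \otimes_{\min} \ell^{f(k)}_2(OH)} \leq \norm{x}_{\K \otimes_{\min} OH}$.

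The heart of the argument, and the point where the $OH$ case genuinely departs from the $C_p$ case, is this last estimate. I would identify $\ell^{f(k)}_2(OH)$ with $OH$ indexed by $\{1, \dots, f(k)\} \times \n$, so that $(E_j x)_{j}$ becomes $\sum_j \sum_{i \in E_j} x_i \otimes (e_j \otimes t_i)$; the vectors $e_j \otimes t_i$ form an orthonormal system, and the defining formula of $OH$ gives $\norm{(E_j x)_j}^2 = \norm{\sum_j \sum_{i \in E_j} x_i \otimes \overline{x_i}}_{\K \otimes_{\min} \overline{\K}}$. Because the $E_j$ are pairwise disjoint, each index $i$ occurs at most once, so this equals $\norm{\sum_{i \in \bigcup_j E_j} x_i \otimes \overline{x_i}} = \norm{P_{\bigcup_j E_j} x}^2_{\K \otimes_{\min} OH}$, where $P_E$ denotes the coordinate projection onto $\{t_i : i \in E\}$. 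Since $OH$ is homogeneous, $P_{\bigcup_j E_j}$ is completely contractive, which gives $\norm{(E_j x)_j} \leq \norm{x}_{\K \otimes_{\min} OH}$, and the factor $\theta < 1$ then closes the induction.

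The main obstacle is precisely this step. In the $C_p$ case one exploited that $x^*_i x_i \geq 0$, so that restricting the sum $\sum_i x^*_i x_i$ to a subset only decreases it; in $OH$ the summands $x_i \otimes \overline{x_i}$ need not be positive, and monotonicity under passing to a subset of coordinates cannot be read off term by term. Instead one must route the estimate through the self-duality formula of $OH$ together with the homogeneity of $OH$, equivalently the complete contractivity of the coordinate projections. I would also confirm the two structural inputs invoked above, namely the $OH$-analogue of the cb-extension property \eqref{cb-extension} and the complete isometry $\ell^{f(k)}_2(OH) \cong OH(\{1, \dots, f(k)\} \times \n)$, both of which are standard consequences of the functoriality and associativity of the vector-valued $OH$ construction.
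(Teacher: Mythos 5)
Your proposal is correct, and it takes the approach the paper intends: the proof of this proposition is omitted in the paper (deferred to the arguments of Section \ref{Cp}), and your argument is precisely the natural adaptation of Proposition \ref{prop-Hilbertian-Cp} — monotonicity from the $\max$, induction via \eqref{cb-extension} for $\ell_2(\cdot)$, and the reduction to the single estimate $\theta\norm{(E_jx)_j}_{\K\otimes_{\min}\ell_2^{f(k)}(OH)}\leq\norm{x}_{\K\otimes_{\min}OH}$. You also correctly identify the one step that does not transfer verbatim (the positivity argument $\sum_{i\in\bigcup_j E_j}x_i^*x_i\leq\sum_i x_i^*x_i$ has no termwise analogue for $x_i\otimes\overline{x_i}$) and replace it with the complete isometry $\ell_2^{f(k)}(OH)\cong OH(\{1,\dots,f(k)\}\times\n)$ together with the homogeneity of $OH$ (equivalently, the map $x\mapsto(E_jx)_j$ is a partial isometry between homogeneous Hilbertian operator spaces, hence completely contractive), which is exactly what is needed.
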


Now we can consider $\norm{x} = \lim_{n\rightarrow \infty}\norm{x}_n$ for all $x \in \K_0 \otimes c_{00}$,
and $X_{OH}$, the completion of $(c_{00},\norm{\cdot})$ inherits the operator space structure from $\ell_{\infty}(X_n)$.
Moreover, $X_{OH}$ is Hilbertian by Proposition \ref{prop-Hilbertian-OH}.

\begin{prop}\label{prop-another-def}
For any $x \in \K_0 \otimes c_{00}$ and any $n\geq 0$ we have
$$\norm{x}_{n+1} = \max\Big\{\norm{x}_0, \theta \sup \norm{(E_j x)^{f(k)}_{j=1}}_{\K \otimes_{\min} \ell^{f(k)}_2 (X_n)}\Big\},$$
where the inner supremum runs over all ``allowable" sequence $\{E_j\}^{f(k)}_{j=1} \subseteq \mathbb{N}$.
\end{prop}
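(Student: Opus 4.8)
The plan is to transcribe the proof of Proposition~\ref{prop-another-def-Cp} into the $OH$ setting, replacing the column-valued structure $C_p(X_n)$ by the $\ell_2$-valued structure $\ell^{f(k)}_2(X_n)$ throughout. Since the recursive definition of $\norm{\cdot}_{n+1}$ has exactly the same shape in both constructions, a descent argument on $n$ should carry over verbatim. Concretely, I would fix $x \in \K_0 \otimes c_{00}$ and $n \geq 0$, recall that by definition
$$\norm{x}_{n+1} = \max\Big\{\norm{x}_n, \theta \sup \norm{(E_j x)^{f(k)}_{j=1}}_{\K \otimes_{\min} \ell^{f(k)}_2 (X_n)}\Big\},$$
and split into two cases according to which term realises the maximum.

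If the supremum term dominates, then $\norm{x}_{n+1}$ already equals $\theta \sup \norm{(E_j x)_j}_{\K \otimes_{\min}\ell^{f(k)}_2(X_n)}$; since $\norm{x}_0 \leq \norm{x}_n$ by the monotonicity established in Proposition~\ref{prop-Hilbertian-OH}, both $\norm{x}_n$ and a fortiori $\norm{x}_0$ are dominated by this term, so replacing $\norm{x}_n$ by $\norm{x}_0$ inside the maximum changes nothing and the asserted identity holds. If instead $\norm{x}_{n+1} = \norm{x}_n$ strictly exceeds the supremum term, I would run the descent. By Proposition~\ref{prop-Hilbertian-OH} the formal identity $i_n : X_n \rightarrow X_{n-1}$ is a complete contraction, so by \eqref{cb-extension} applied with $S = \ell_2$ the induced map $\ell^{f(k)}_2(X_n) \rightarrow \ell^{f(k)}_2(X_{n-1})$ is again completely contractive. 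Hence for every allowable sequence we get $\norm{(E_j x)_j}_{\K \otimes_{\min}\ell^{f(k)}_2(X_{n-1})} \leq \norm{(E_j x)_j}_{\K \otimes_{\min}\ell^{f(k)}_2(X_n)}$, and passing to the supremum yields
$$\norm{x}_n > \theta \sup \norm{(E_j x)_j}_{\K \otimes_{\min}\ell^{f(k)}_2(X_n)} \geq \theta \sup \norm{(E_j x)_j}_{\K \otimes_{\min}\ell^{f(k)}_2(X_{n-1})}.$$
By the recursive definition of $\norm{\cdot}_n$ this forces $\norm{x}_n = \norm{x}_{n-1}$, and iterating the same estimate down the scale gives $\norm{x}_{n+1} = \norm{x}_n = \cdots = \norm{x}_0$, which is again the claimed value.

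The only point requiring genuine care—and the step I expect to be the main obstacle—is the lift of the complete contraction $i_n$ to the vector-valued level $\ell^{f(k)}_2(X_n) \rightarrow \ell^{f(k)}_2(X_{n-1})$. In the $C_p$ case this was supplied by \eqref{cb-extension} for $S = C_p$; here I rely on the same principle for $S = \ell_p$ with $p = 2$, observing that the truncated space $\ell^{f(k)}_2$ sits $1$-completely complementedly inside $\ell_2$ via the coordinate projection, so the cb-estimate of \eqref{cb-extension} transfers to the finite $\ell_2$-sum. Once this is in place the argument is purely formal and mirrors Proposition~\ref{prop-another-def-Cp}; no phenomenon specific to $p=2$ enters, which is consistent with the paper's remark that many of the $p<2$ proofs carry over unchanged to the $OH$ case.
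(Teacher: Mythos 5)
Your argument is correct and is essentially the same as the paper's: the paper gives no separate proof for this proposition, intending exactly the transcription of the proof of Proposition~\ref{prop-another-def-Cp} that you carry out, namely the descent $\norm{x}_{n+1}=\norm{x}_n=\cdots=\norm{x}_0$ via the complete contractions $\ell^{f(k)}_2(X_n)\rightarrow\ell^{f(k)}_2(X_{n-1})$ induced by the formal identities. Your added remark justifying the vector-valued lift through the $1$-complete complementation of $\ell^{f(k)}_2$ in $\ell_2$ is a correct and welcome clarification of a step the paper leaves implicit.
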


\begin{prop}
The canonical basis $\{t_i\}_{i\geq 1}$ is a normalized 1-completely unconditional basis for $X_{OH}$.
\end{prop}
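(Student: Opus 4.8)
The plan is to follow the scheme of Proposition \ref{prop-basis-Cp}. Fix a scalar sequence $(a_i)_{i\geq 1}$ with $\abs{a_i}\leq 1$ for all $i$, and show by induction on $n$ that the diagonal multiplier $M_a : t_i \mapsto a_i t_i$ is a complete contraction on each $X_n$; this is exactly the statement that $\{t_i\}$ is $1$-completely unconditional at level $n$. The basis is normalized since $\norm{t_i}_{\min \ell_2} = 1 = \norm{t_i}_{OH}$, so $\norm{t_i} = 1$ for all $i$ by Proposition \ref{prop-Hilbertian-OH}. Once complete contractivity of $M_a$ is known on every $X_n$, passing to the limit via $\norm{x} = \lim_n \norm{x}_n$ yields that $M_a$ is completely contractive on $X_{OH}$, which is the desired conclusion.

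The one genuinely different point is the base case $n=0$. Unlike $R_p +_p C_p$, the space $\min \ell_2$ is not homogeneous, so complete contractivity of $M_a$ cannot simply be read off from its Banach-space norm; instead I would use the explicit minimal quantization. Embedding $\min \ell_2$ completely isometrically into $C(K)$ with $K$ the weak-$*$ closed unit ball of $\ell_2$, the injectivity of $\OTm$ gives $\K \OTm \min \ell_2 \subseteq C(K,\K)$, so that for $x = \sum_i x_i \otimes t_i \in \K_0 \otimes c_{00}$,
$$\norm{x}_0 = \sup_{\xi \in K}\Big\| \sum_i \xi_i x_i \Big\|_{\K}.$$
Consequently $\norm{M_a x}_0 = \sup_{\xi \in K}\norm{\sum_i (a_i \xi_i) x_i}_{\K}$, and since $\abs{a_i}\leq 1$ forces $(a_i \xi_i)_i \in K$ whenever $\xi \in K$, this supremum is taken over a subset of $K$ and hence is bounded by $\norm{x}_0$. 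This settles $n=0$.

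For the inductive step, assume $M_a$ is completely contractive on $X_n$. By \eqref{cb-extension} with $S = \ell_2$, the map $I_{\ell_2}\otimes M_a : \ell_2(X_n) \rightarrow \ell_2(X_n)$ is again completely contractive. Because $M_a$ is diagonal it commutes with every coordinate restriction $E_j$, so $(E_j(M_a x))^{f(k)}_{j=1} = (I_{\ell_2}\otimes M_a)\big((E_j x)^{f(k)}_{j=1}\big)$ inside the $f(k)$-truncation $\ell^{f(k)}_2(X_n)$; applying the complete contraction at the level of $\K \OTm$ gives
$$\theta \norm{(E_j M_a x)^{f(k)}_{j=1}}_{\K \OTm \ell^{f(k)}_2(X_n)} \leq \theta \norm{(E_j x)^{f(k)}_{j=1}}_{\K \OTm \ell^{f(k)}_2(X_n)}$$
for every allowable sequence $\{E_j\}^{f(k)}_{j=1}$. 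Taking the supremum over allowable sequences and combining with the induction hypothesis $\norm{M_a x}_n \leq \norm{x}_n$ through the defining formula for $\norm{\cdot}_{n+1}$ yields $\norm{M_a x}_{n+1} \leq \norm{x}_{n+1}$, completing the induction. I expect the base case — replacing the homogeneity argument available in the $C_p$ setting by the concrete $C(K,\K)$ description of $\min \ell_2$ — to be the only real obstacle; the inductive step is formally identical to that of Proposition \ref{prop-basis-Cp}.
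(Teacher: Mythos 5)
Your proposal is correct and follows essentially the same route as the paper: induction on $n$ with the inductive step carried over verbatim from the $C_p$ case via \eqref{cb-extension}, and the base case $n=0$ handled by the explicit description of $\norm{\cdot}_{\K\otimes_{\min}\min\ell_2}$ as $\sup_{\norm{\xi}_{\ell_2}\leq 1}\norm{\sum_i\xi_i x_i}_{\K}$ together with the observation that $(a_i\xi_i)_i$ stays in the unit ball. The paper phrases the base case as the operator norm of the map $\ell_2\to\K$, $e_i\mapsto a_ix_i$, rather than through the $C(K,\K)$ embedding, but this is the same computation.
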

\begin{proof}
The only different point from the proof of Proposition \ref{prop-basis-Cp} is the proof for $X_0$. 
Indeed, for any $\sum_{i \geq 1}x_i\otimes t_i \in \K_0 \otimes c_{00}$ we have
\begin{align*}
\norm{\sum_{i \geq 1}a_i x_i\otimes t_i}_0 & = \norm{\sum_{i \geq 1}a_i x_i\otimes e_i}_{\K \OTm \min \ell_2}
= \norm{u : \ell_2 \rightarrow \K, e_i \mapsto a_i x_i} \\
& = \sup\Big\{ \norm{\sum_{i \geq 1}\xi_i a_i x_i} : \sum_{i \geq 1}\abs{\xi_i}^2 \leq 1 \Big\}\\
& \leq \norm{v : \ell_2 \rightarrow \K, e_i \mapsto x_i} \leq \norm{\sum_{i \geq 1} x_i\otimes t_i}_0
\end{align*}
since $\sum_{i \geq 1}\abs{\xi_i a_i}^2 \leq 1$.
\end{proof}

Now we investigate operator space structure spanned by certain disjoint block sequences of $\{t_i\}_{i\geq 1}$. They are
$\theta$-completely isomorphic to operator Hilbert spaces with the same dimensions.

\begin{prop}\label{Prop-blockbasic}
Let $(y_j)^{f(k)}_{j=1}$ be a disjoint and normalized block sequence of $\{t_i\}_{i\geq 1}$ such that
$\text{\rm supp}(y_j)\subseteq \{k, k+1, \cdots\}$ for all $1 \leq j \leq f(k)$. Then we have
$$\theta \Big( \sum^{f(k)}_{j=1} \norm{b_j}^2_{S_2} \Big)^{\frac{1}{2}}
\leq \norm{\sum^{f(k)}_{j=1} b_j \otimes y_j}_{S_2(X_{OH})} \leq \Big( \sum^{f(k)}_{j=1} \norm{b_j}^2_{S_2} \Big)^{\frac{1}{2}}.$$
\end{prop}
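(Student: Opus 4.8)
The plan is to follow the template of Proposition \ref{prop-block-oss}, establishing the two inequalities separately. I would first record the translation of the target: since the $y_j$ have disjoint supports they are orthogonal in the underlying $\ell_2$, and $\norm{y_j}_{OH}=\norm{y_j}_{\ell_2}\leq\norm{y_j}_{X_{OH}}=1$ by Proposition \ref{prop-Hilbertian-OH}; hence by homogeneity and the $S_2(OH)$ formula the right-hand side $(\sum_j\norm{b_j}_{S_2}^2)^{1/2}$ is exactly $\norm{\sum_j b_j\otimes e_j}_{S_2(OH)}$, the common reference point for both bounds.

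For the upper bound I would argue directly rather than mirror the induction behind \eqref{stronger}. Proposition \ref{prop-Hilbertian-OH} gives $\norm{x}_{X_{OH}}\leq\norm{x}_{\K\OTm OH}$, so the formal identity $OH\to X_{OH}$ is completely contractive and by \eqref{cb-extension} $\norm{\sum_j b_j\otimes y_j}_{S_2(X_{OH})}\leq\norm{\sum_j b_j\otimes y_j}_{S_2(OH)}$. Because $OH$ is homogeneous and $\{y_j\}$ is an orthogonal system with $\norm{y_j}_{OH}\leq1$, an orthonormal-basis change is a complete isometry and the last norm decouples as $(\sum_j\norm{b_j}^2\norm{y_j}_{OH}^2)^{1/2}\leq(\sum_j\norm{b_j}^2)^{1/2}$. (If one prefers the self-strengthening induction, the only change from Proposition \ref{prop-block-oss} is that the $C_p(C_p)$ computation is replaced by the $OH$-valued norm together with the normalization $\sum_j\norm{\theta E_j y_i}^2\leq\theta^2\leq1$.)

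For the lower bound I would set $E_j=\operatorname{supp}y_j$; the recursive definition makes $x\mapsto\theta(E_jx)_{j=1}^{f(k)}$ a complete contraction $X_{OH}\to\ell_2^{f(k)}(X_n)$ for every $n$, so by \eqref{cb-extension}, using $E_jy_i=\delta_{ij}y_j$, $$\norm{\sum_i b_i\otimes y_i}_{S_2(X_{OH})}\geq\theta\,\norm{\sum_j b_j\otimes\epsilon_j\otimes y_j}_{S_2(\ell_2^{f(k)}(X_n))},$$ where $(\epsilon_j)$ is the canonical basis of $\ell_2^{f(k)}$. I would then reduce each block $y_j$ to its minimal-support coordinate $t_{n_j}$ by the $OH$-analogue of Lemma \ref{lem-block-lower-bound}, proved by the same induction on $n$, leaving the emerging estimate $\norm{\sum_j b_j\otimes\epsilon_j\otimes t_{n_j}}_{S_2(\ell_2^{f(k)}(X_n))}\geq(\sum_j\norm{b_j}^2)^{1/2}$ to be verified, with $n$ at our disposal since the contraction holds for all $n$.

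The hard part, and the place where the argument must differ substantially from Section \ref{Cp}, is precisely this emerging estimate. In the proof of Lemma \ref{lem-emerging} one passes to the base space $X_0=R_p+_pC_p$ and identifies the diagonal with $C_p$, exploiting the $C_p$ summand present in the base. Here the base $X_0=\min\ell_2$ has no $OH$ summand: a computation with the $OH$-valued norm shows that the diagonal of $\ell_2^{f(k)}(\min\ell_2)$ carries only the $c_0$-norm $\max_j\norm{b_j}$, so collapsing $X_n$ to $X_0$ destroys the estimate (indeed this is the same $c_0$-phenomenon that will later separate $X_{OH}$ from $OH$). I therefore expect to retain the $OH$-structure at level $n$ and recover the full $\ell_2$-mass by duality, using $OH^\ast=OH$: pairing the diagonal against a biorthogonal tuple $\sum_j b_j'\otimes\epsilon_j\otimes t_{n_j}^\ast$ in the dual $S_2(\ell_2^{f(k)}(X_n^\ast))$ built on $\max\ell_2$, with $b_j'=\bar b_j/(\sum_i\norm{b_i}^2)^{1/2}$, reduces matters to an upper bound for this dual tuple of the same kind as the one already proved, the large cardinality $f(k)=(4k^3)^k$ leaving room as in the Tsirelson construction. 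Controlling that dual norm uniformly in $n$ is the main obstacle I anticipate.
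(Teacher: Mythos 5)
Your upper bound is fine and is actually a shortcut past the paper's induction: since the formal identity $OH\to X_{OH}$ is a complete contraction (Proposition \ref{prop-Hilbertian-OH}) and the $y_j$ are orthogonal with $\norm{y_j}_{\ell_2}=\norm{y_j}_{X_{OH}}=1$, the bound $\norm{\sum_j b_j\otimes y_j}_{S_2(X_{OH})}\leq\norm{\sum_j b_j\otimes y_j}_{S_2(OH)}=(\sum_j\norm{b_j}^2_{S_2})^{1/2}$ is immediate. The paper instead proves the self-strengthened statement \eqref{induction-1} by induction, but your direct route is legitimate.

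The lower bound, however, has a genuine gap, and the gap comes from importing the wrong template. After the (correct) first step --- applying the complete contraction $x\mapsto\theta(E_jx)_{j=1}^{f(k)}$ with $E_j=\operatorname{supp}y_j$ to get $\theta\norm{\sum_j b_j\otimes\epsilon_j\otimes y_j}_{S_2(\ell_2^{f(k)}(X_n))}$ --- you reduce to $t_{n_j}$ and then need an ``emerging estimate,'' which you do not prove and explicitly flag as the main obstacle; the duality argument you sketch (pairing against a tuple in $S_2(\ell_2^{f(k)}(X_n^*))$ uniformly in $n$) is speculative and is not how the paper proceeds. What you are missing is that for $p=2$ no analogue of Lemmas \ref{lem-block-lower-bound} and \ref{lem-emerging} is needed at all: by Proposition 2.1 of \cite{P2} (i.e.\ \eqref{Sp-sum} with $p=2$ iterated) one has $S_2(\ell_2^{f(k)}(E))=\ell_2^{f(k)}(S_2(E))$ \emph{isometrically for every operator space} $E$, and by Lemma 3.6 of \cite{P2} the rank-one formula $\norm{b\otimes y}_{S_2(E)}=\norm{b}_{S_2}\norm{y}_E$ holds. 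Hence $\theta\norm{\sum_j b_j\otimes\epsilon_j\otimes y_j}_{S_2(\ell_2^{f(k)}(X_n))}=\theta(\sum_j\norm{b_j}^2_{S_2}\norm{y_j}^2_{X_n})^{1/2}$ exactly, and letting $n\to\infty$ (or taking the supremum over $n$, since $\norm{y_j}_{X_n}\uparrow\norm{y_j}_{X_{OH}}=1$) gives $\theta(\sum_j\norm{b_j}^2_{S_2})^{1/2}$. This also shows your diagnosis of the base case is wrong: collapsing to $X_0=\min\ell_2$ does \emph{not} reduce the diagonal to a $c_0$-norm, because the $\oplus_2$-decoupling of $S_2$ is insensitive to the operator space structure of the summands; the $c_0$-phenomenon you invoke lives in $R\OTm X_n$ (Proposition \ref{prop-C0copy}), not in $S_2(\ell_2^{f(k)}(X_0))$. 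In short, the $p=2$ case is \emph{easier} than the $C_p$ case at precisely the point where you anticipated it being harder, and your proposal as written does not close the lower bound.
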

\begin{proof}
Consider any ``allowable" sequence $\{E_j\}^{f(k)}_{j=1} \subseteq \mathbb{N}$. Since we have
$$X_{n+1} \rightarrow \ell^{f(k)}_2(X_n),\; x \mapsto (\theta E_j x)^{f(k)}_{j=1}$$ is completely contractive
$$X_{OH} \rightarrow \ell^{f(k)}_2(X_n),\; x \mapsto (\theta E_jx)^{f(k)}_{j=1}$$ is also completely contractive.
Thus if we set $E_j = \text{supp}(y_j)$, then $\{E_j\}^{f(k)}_{j=1}$ is ``allowable", so that we have
\begin{align*}
\norm{\sum^{f(k)}_{j=1}b_j\otimes y_j}_{S_2(X_{OH})}
&\geq\norm{(\theta E_j\Big(\sum^{f(k)}_{i=1}b_i\otimes y_i\Big))^{f(k)}_{j=1}}_{S_2(\ell^{f(k)}_2(X_n))}\\
& = \theta \Big( \sum^{f(k)}_{j=1} \norm{E_j\Big(\sum^{f(k)}_{i=1}b_i \otimes y_i\Big)}^2_{S_2(X_n)} \Big)^{\frac{1}{2}}\\
& = \theta \Big( \sum^{f(k)}_{j=1} \norm{b_j \otimes y_j}^2_{S_2(X_n)} \Big)^{\frac{1}{2}}\\
& = \theta \Big( \sum^{f(k)}_{j=1} \norm{b_j}^2_{S_2}\norm{y_j}^2_{X_n} \Big)^{\frac{1}{2}}
= \theta \Big( \sum^{f(k)}_{j=1} \norm{b_j}^2_{S_2} \Big)^{\frac{1}{2}}.
\end{align*}
The first and the third equality comes from Proposition 2.1 and Lemma 3.6 of \cite{P2}, respectively.

For the right inequality we will show the following more general results using induction on $n$.
\begin{equation}\label{induction-1}
\norm{\sum^{f(k)}_{i=1} b_i \otimes y_i}_{S_2(X_n)} \leq \Big(\sum^{f(k)}_{i=1} \norm{b_i}^2_{S_2} \Big)^{\frac{1}{2}}
\end{equation}
for all $(b_i)^{f(k)}_{i=1} \subseteq \K$ and for any disjoint and normalized sequence $(y_i)^{f(k)}_{i=1}$.

When $n=0$, we have (\ref{induction-1}) since $$CB(OH, \min\ell_2) = B(OH, \min\ell_2)$$ isometrically
and $(e_{j1})_{j\geq 1}$ and $(y_j)^{f(k)}_{j = 1}$ are orthonormal.

Now suppose we have (\ref{induction-1}) for $n$.
By the induction hypothesis and Proposition 2.1 of \cite{P2} we have for any ``allowable" sequence $\{E_j\}^{f(l)}_{j=1} \subseteq \mathbb{N}$ that
\begin{align*}
\theta \norm{(E_j(\sum^{f(k)}_{i=1} b_i \otimes y_i))^{f(l)}_{j=1} }_{S_2(\ell^{f(l)}_2(X_n))}
& =  \Big( \sum^{f(l)}_{j=1} \norm{\sum^{f(k)}_{i=1}  b_i \otimes \theta E_j y_i}^2_{S_2(X_n)} \Big)^{\frac{1}{2}} \\
& \leq \Big[ \sum^{f(l)}_{j=1} \Big(\sum^{f(k)}_{i=1} \norm{b_i}^2_{S_2} \norm{\theta E_j y_i}^2_{X_n}\Big) \Big]^{\frac{1}{2}} \\
& = \Big[ \sum^{f(k)}_{i=1} \norm{b_i}^2_{S_2} \Big( \sum^{f(l)}_{j=1}\norm{ \theta E_j y_i}^2_{X_n}\Big) \Big]^{\frac{1}{2}} \\
& \leq \Big[ \sum^{f(k)}_{i=1} \norm{b_i}^2_{S_2} \norm{y_i}^2_{X_{OH}} \Big]^{\frac{1}{2}}
= \Big[ \sum^{f(k)}_{i=1} \norm{b_i}^2_{S_2}\Big]^{\frac{1}{2}}.
\end{align*}
Thus, we have that $$e_i \mapsto (E_j y_i)^{f(l)}_{j=1}, OH_{f(k)} \rightarrow \ell^{f(l)}_2(X_n)$$ is a complete contraction
for all ``allowable" sequence $\{E_j\}^{f(l)}_{j=1}$, which implies
$$e_i \mapsto y_i, OH_{f(k)} \rightarrow X_{n+1}$$ is also a complete contraction.
Consequently, we get the desired induction result for $n+1$.
\end{proof}

\subsection{$X_{OH}$ is a nontrivial weak-$OH$ space}\label{sec-example-weakH}

\begin{prop}\label{prop-subsp}
For $n\in \mathbb{N}$ we consider $$Y_n = \overline{\text{span}}\{t_i\}_{i\geq n+1} \subseteq X_{OH}.$$
Then for any $E\subseteq Y_n$  with $\text{dim}E = n$, we have $$d_{cb}(E, OH_n) \leq 3\theta^{-1}.$$
\end{prop}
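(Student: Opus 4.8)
The statement to prove is Proposition~\ref{prop-subsp}, the $OH$-analogue of Proposition~\ref{prop-Yn}. Since the author has explicitly flagged that the $OH$ case reuses most arguments from the $C_p$ case, the plan is to mirror the proof of Proposition~\ref{prop-Yn} verbatim, substituting $OH_n$ for $C^n_p$ and invoking the $OH$-versions of the supporting results. Concretely, the skeleton is: start with an arbitrary $n$-dimensional $E\subseteq Y_n$; use a combinatorial approximation lemma to push $E$ onto a subspace spanned by disjoint block vectors; control the distance of that block span to $OH_n$ via the block-basis estimate; and combine the two estimates multiplicatively.

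**Key steps.** First I would apply Proposition~V.6 of \cite{CS} to obtain a linear map $V:E\to Y_n$ whose range lies in $\overline{\text{span}}\{y_i\}^{f(n)}_{i=1}$ for some disjoint block sequence $(y_i)$ supported in $\{n+1,n+2,\dots\}$, and satisfying $\norm{Vf-f}\leq \tfrac{1}{2n}\norm{f}$ for all $f\in E$; this is exactly where the choice $f(k)=(4k^3)^k$ is needed, as the remark after Proposition~\ref{prop-Yn} stresses. Second, passing to an Auerbach basis $(x_i,x_i^*)^n_{i=1}$ of $E$, the perturbation estimate
\begin{equation*}
\sum^{n}_{i=1}\norm{x^*_i}\norm{x_i-Vx_i}\leq \frac{1}{2n}\sum^{n}_{i=1}\norm{x_i}\leq \frac{1}{2}
\end{equation*}
together with the perturbation lemma (Lemma~2.13.2 of \cite{P3}) yields $d_{cb}(E,V(E))\leq 3$. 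Third, I would invoke Proposition~\ref{Prop-blockbasic} — the $OH$-block-basis proposition — which shows that the disjoint normalized block sequence $(y_i)^{f(n)}_{i=1}$ spans a space $\theta^{-1}$-completely isomorphic to $OH_{f(n)}$, so $d_{cb}(\overline{\text{span}}\{y_i\}^{f(n)}_{i=1},\,OH_{f(n)})\leq \theta^{-1}$. Composing these estimates and using that $OH$ is homogeneous (so the distance is insensitive to which $f(n)$-dimensional piece we land in) gives $d_{cb}(E,OH_n)\leq 3\theta^{-1}$.

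**Main obstacle.** The substantive content is entirely outsourced to the two cited ingredients, so the only genuine point requiring care is the verification that Proposition~V.6 of \cite{CS} applies to produce a \emph{block} sequence in $Y_n$ rather than in $X_{OH}$ at large — i.e.\ ensuring the approximating vectors remain supported in $\{n+1,n+2,\dots\}$ so that the support condition $\text{supp}(y_j)\subseteq\{k,k+1,\dots\}$ demanded by Proposition~\ref{Prop-blockbasic} is met. This is guaranteed because $E\subseteq Y_n$ and the construction of $V$ respects supports, but it is the hinge that makes the combinatorial choice $f(k)=(4k^3)^k$ interact correctly with the allowable-sequence structure built into the norm. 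Everything else is a transcription of the $C_p$ argument, and I expect the proof to be a short paragraph essentially identical to that of Proposition~\ref{prop-Yn}.
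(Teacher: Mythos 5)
Your proposal is correct and follows exactly the route the paper intends: the paper gives no separate proof of Proposition~\ref{prop-subsp}, deferring to the argument for Proposition~\ref{prop-Yn}, and your transcription (Proposition~V.6 of \cite{CS} plus the Auerbach/perturbation estimate giving $d_{cb}(E,V(E))\leq 3$, then Proposition~\ref{Prop-blockbasic} and homogeneity of $OH$ giving the factor $\theta^{-1}$) is precisely that argument. No gaps.
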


\begin{thm}\label{thm-main1-OH}
$X_{OH}$ is a weak-$OH$ space.
\end{thm}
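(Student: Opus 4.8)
The plan is to mirror the proof of Theorem \ref{thm-main1-Cp} essentially verbatim, since every ingredient used there has an $OH$-analogue already established in this section. First I would fix $0<\delta<1$ and take any finite-dimensional $E\subseteq X_{OH}$ with $\dim E = 2n$ or $2n+1$. Exactly as before, the codimension-$n$ subspace $Y_n = \overline{\text{span}}\{t_i\}_{i\geq n+1}$ satisfies $\dim(E\cap Y_n)\geq n$, so I can extract $F\subseteq E$ with $\dim F = n$ and $F\subseteq Y_n$. Proposition \ref{prop-subsp} then gives $d^{OH}_{F,cb}=d_{cb}(F,OH_n)\leq 3\theta^{-1}$, which supplies the first of the three defining estimates for a weak-$OH$ space with constant $C=3\theta^{-1}$.

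Next I would produce the bounded projection. The key point is that Proposition \ref{prop-complemented} is stated for an arbitrary perfectly Hilbertian $H$, so it applies with $H=OH$ once I verify its hypothesis: there is a constant (namely $3\theta^{-1}$) bounding $d^{OH}_{G,cb}$ for every $n$-dimensional subspace $G$ of $Y_n$. But that is precisely the content of Proposition \ref{prop-subsp}. Applying Proposition \ref{prop-complemented} to the operator space $Y_n$ and its subspace $F$ yields a projection of $Y_n$ onto $F$ with $\gamma_{OH}(P)\leq 3\theta^{-1}$; in particular $\norm{P}_{cb}\leq 3\theta^{-1}$. Since $Y_n$ is $1$-completely complemented in $X_{OH}$ (it is the closed span of a tail of the $1$-completely unconditional canonical basis, so the diagonal truncation is a complete contraction), composing the two projections gives a projection of $X_{OH}$ onto $F$ with cb-norm at most $3\theta^{-1}$. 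This verifies the remaining two conditions, $\dim F = n \geq \delta\dim E$ (adjusting $n$ to $\dim E$ up to the factor $2$) and $\norm{P}_{cb}\leq C$, establishing that $X_{OH}$ is a weak-$OH$ space.

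I do not anticipate a genuine obstacle, since the structural backbone---the description of $d^H_{E,cb}$ in Lemma \ref{lem-description}, the nuclearity estimate in Lemma \ref{lem-summing-composition}, and the complementation result in Proposition \ref{prop-complemented}---was all phrased for a general perfectly Hilbertian $H$ and thus covers $OH$ without modification. The only facts truly specific to the $p=2$ case, namely the $\theta$-complete isomorphism of block spans to $OH_{f(k)}$ (Proposition \ref{Prop-blockbasic}) and the resulting estimate $d_{cb}(E,OH_n)\leq 3\theta^{-1}$ (Proposition \ref{prop-subsp}), have already been proved above. The mildest point requiring a word of care is that $OH$ is indeed perfectly Hilbertian, so that Propositions \ref{prop-complemented} and \ref{lem-description} legitimately apply; this is standard, as $OH$ is self-dual, homogeneous, and subquadratic. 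With that noted, the argument of Theorem \ref{thm-main1-Cp} transfers line by line.
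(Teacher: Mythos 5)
Your proposal is correct and follows essentially the same route as the paper: the paper gives no separate proof for Theorem \ref{thm-main1-OH}, relying on the fact that the argument of Theorem \ref{thm-main1-Cp} (intersect $E$ with $Y_n$, apply Proposition \ref{prop-subsp} in place of Proposition \ref{prop-Yn}, then Proposition \ref{prop-complemented} and the $1$-complete complementation of $Y_n$) transfers verbatim, which is exactly what you carry out. Your added remarks checking that $OH$ is perfectly Hilbertian and that the achieved proportion is only about $\tfrac{1}{2}\dim E$ are accurate and consistent with what the paper itself does.
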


\begin{lem}\label{lem1}
Let $N \in \mathbb{N}$ be fixed. Then, for any $n\geq 0$ and any $y,z\in X_{OH}$ with $\text{supp}y \subseteq \{1, 2, \cdots, N\}$
and $\text{supp}z \subseteq \{N+1, N+2, \cdots \}$ we have $$\norm{y+z}_{n+1} \leq \max\{ \norm{y}_{n+1} + \alpha \norm{z}_n , \norm{z}_{n+1} \},$$
where $\alpha = \max\{1, \theta \sqrt{f(N)}\}$.
\end{lem}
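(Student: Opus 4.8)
Lemma \ref{lem1} is the $OH$-analogue of Lemma \ref{lem1-Cp}. The statement asserts a near-triangle-inequality for the recursively defined norms $\norm{\cdot}_{n+1}$ when $y$ is supported on $\{1,\dots,N\}$ and $z$ is supported on $\{N+1,N+2,\dots\}$, with the $z$-contribution measured at level $n$ (not $n+1$) and carrying a multiplicative factor $\alpha = \max\{1,\theta\sqrt{f(N)}\}$. The plan is to mimic the proof of Lemma \ref{lem1-Cp} essentially line for line, replacing everywhere the column-Schatten norm $\norm{\cdot}_{\K\OTm C_p(X_n)}$ by the $\ell_2$-direct-sum norm $\norm{(E_jx)_j}_{\K\OTm \ell^{f(k)}_2(X_n)}$ that defines the $OH$ construction.

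\emph{First step: case split on which term achieves the max defining $\norm{y+z}_{n+1}$.} By the recursive definition
$$\norm{y+z}_{n+1} = \max\Big\{\norm{y+z}_n,\; \theta\sup\norm{(E_j(y+z))^{f(k)}_{j=1}}_{\K\OTm \ell^{f(k)}_2(X_n)}\Big\}.$$
If the first term wins, I estimate $\norm{y+z}_n \leq \norm{y}_n + \norm{z}_n \leq \norm{y}_{n+1} + \norm{z}_n$ by the triangle inequality for $\norm{\cdot}_n$ together with monotonicity in $n$ from Proposition \ref{prop-Hilbertian-OH}, which already lands inside the claimed maximum since $\alpha \geq 1$. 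If instead the supremum term wins, I fix an allowable sequence $\{E_j\}^{f(k)}_{j=1}$ with $E_j\subseteq\{k,k+1,\dots\}$ and split according to whether $k>N$ or $k\leq N$.

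\emph{Second step: the two support regimes.} When $k>N$, every $E_j\subseteq\{k,\dots\}$ misses the support of $y$, so $E_j(y+z)=E_jz$ and the whole supremum term collapses to (at most) $\norm{z}_{n+1}$, giving the second entry of the max. When $k\leq N$, I use that $\ell_2$-direct-sum norms satisfy the triangle inequality, so
$$\theta\norm{(E_j(y+z))_j}_{\K\OTm \ell^{f(k)}_2(X_n)} \leq \theta\norm{(E_jy)_j}_{\K\OTm \ell^{f(k)}_2(X_n)} + \theta\norm{(E_jz)_j}_{\K\OTm \ell^{f(k)}_2(X_n)}.$$
The first summand is bounded by $\norm{y}_{n+1}$ directly from the definition. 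For the second summand I pass to the crude coordinatewise bound $\theta\big(\sum_j\norm{E_jz}_n^2\big)^{1/2}$; since $\{E_j\}$ is disjoint and $E_j\subseteq\{1,\dots,N\}$ forces at most $f(N)$ of the $E_j$ to meet $\operatorname{supp}z$, while each $\norm{E_jz}_n\leq\norm{z}_n$, this is at most $\theta\sqrt{f(N)}\,\norm{z}_n \leq \alpha\norm{z}_n$. Combining the two summands yields $\norm{y}_{n+1}+\alpha\norm{z}_n$, the first entry of the max.

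\emph{Main obstacle and bookkeeping.} The essential difference from Lemma \ref{lem1-Cp} is purely in the counting factor: there the row-to-column structure produced $\theta N$, whereas here the $\ell_2$-aggregation over the $j$-index produces $\theta\sqrt{f(N)}$, because the $f(N)$ surviving blocks are combined in $\ell_2$ rather than $\ell_1$. The one point needing genuine care is justifying the coordinatewise estimate $\norm{(E_jz)_j}_{\ell^{f(k)}_2(X_n)} \leq \big(\sum_j\norm{E_jz}_n^2\big)^{1/2}$ at the level of $\K\OTm$; this is exactly the content of the $\ell_2$-sum structure (Proposition 2.1 of \cite{P2}) already invoked in the proof of Proposition \ref{Prop-blockbasic}, so it is available. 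Apart from this substitution the argument is a routine transcription, and assembling the three cases (first term wins; $k>N$; $k\leq N$) into the single maximum completes the proof exactly as in Lemma \ref{lem1-Cp}.
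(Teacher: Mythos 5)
Your proof is correct and follows essentially the same route as the paper's: split on whether the maximum is attained by $\norm{y+z}_n$ (where the triangle inequality at level $n$ suffices), then for an allowable family with $k>N$ note that $E_j(y+z)=E_jz$ so the term is dominated by $\norm{z}_{n+1}$, and for $k\le N$ apply the triangle inequality in $\K\OTm\ell^{f(k)}_2(X_n)$, bound the $y$-part by $\norm{y}_{n+1}$, and bound the $z$-part by $\theta\big(\sum_j\norm{E_jz}^2_n\big)^{1/2}\le\theta\sqrt{f(N)}\norm{z}_n$. The only (cosmetic) slip is in your justification of the count: at most $f(N)$ blocks contribute simply because there are only $f(k)\le f(N)$ of them when $k\le N$, not because $E_j\subseteq\{1,\dots,N\}$ (allowability gives $E_j\subseteq\{k,k+1,\dots\}$).
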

\begin{proof}
Let's fix $n\geq 0$. If $\norm{y+z}_{n+1} = \norm{y+z}_n$, then it is trivial.

Now we consider any ``allowable" sequence $\{E_j\}^{f(k)}_{j=1} \subseteq \mathbb{N}$.
When $k > N$, we have $$E_j(y+z) = E_jz,$$ so that $$\theta \norm{(E_j(y+z))^{f(k)}_{j=1}}_{\K \OTm \ell^{f(k)}_2(X_n)} \leq \norm{z}_{n+1}.$$
When $k \leq N$, we have
\begin{align*}
\lefteqn{\theta \norm{(E_j(y+z))^{f(k)}_{j=1}}_{\K \OTm \ell^{f(k)}_2(X_n)}}\\
& \leq \theta \norm{(E_jy)^{f(k)}_{j=1}}_{\K \OTm \ell^{f(k)}_2(X_n)} + \theta \norm{(E_jz)^{f(k)}_{j=1}}_{\K \OTm \ell^{f(k)}_2(X_n)} \\
& \leq \norm{y}_{n+1} + \theta \norm{(E_jz)^{f(k)}_{j=1}}_{\ell^{f(k)}_2(\K \OTm X_n)} \\
& = \norm{y}_{n+1} + \theta \sqrt{\sum^{f(k)}_{j=1} \norm{E_jz}^2_n} \\
& \leq \norm{y}_{n+1} + \theta \sqrt{f(N)} \norm{z}_n.
\end{align*}

\end{proof}

\begin{lem}\label{lem2}
For any $n\geq 0$ and any $x\in \K \OTm X_{OH}$ we have $$\norm{x}_{\K \OTm X_{OH}} \leq \norm{(x,\theta^n x)}_{\K \OTm (X_n \oplus_2 OH)}.$$
\end{lem}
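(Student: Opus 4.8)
The plan is to follow the proof of Lemma~\ref{lem2-Cp} almost verbatim, replacing the column structure $C_p(\cdot)$ by the $\ell_2$-valued structure $\ell^{f(k)}_2(\cdot)$, the stacked columns $\sum_j e_{j1}\otimes E_j x$ by the tuples $(E_j x)^{f(k)}_{j=1}$, and $\oplus_p$ by $\oplus_2$ throughout. I would establish the inequality for every $x\in\K_0\otimes c_{00}$ by induction on $n$ and then extend to all of $\K\OTm X_{OH}$ by density. The base case $n=0$ is immediate: by Proposition~\ref{prop-Hilbertian-OH} we have $\norm{x}_{\K\OTm X_{OH}}\leq\norm{x}_{\K\OTm OH}$, and since $\theta^0=1$ the right-hand side is dominated by $\norm{(x,x)}_{\K\OTm(X_0\oplus_2 OH)}$ because the coordinate projection onto $OH$ is completely contractive.

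For the inductive step I would first restate the hypothesis for $n$ as the assertion that the formal map $(x,\theta^n x)\mapsto x$ is completely contractive from $F:=\overline{\text{span}}\{(y,\theta^n y):y\in c_{00}\}\subseteq X_n\oplus_2 OH$ into $X_{OH}$. Applying \eqref{cb-extension} with $S=\ell_2$ then produces, for each allowable $\{E_j\}^{f(k)}_{j=1}$, a complete contraction $\ell^{f(k)}_2(F)\to\ell^{f(k)}_2(X_{OH})$ sending $(E_j(x,\theta^n x))_j\mapsto(E_j x)_j$. Feeding $\big(\theta E_j(x,\theta^n x)\big)_j=\big((\theta E_j x,\theta^{n+1}E_j x)\big)_j$ through this map and then splitting via the $\ell_2$-valued instance of \eqref{Sp-sum}, namely $\ell^{f(k)}_2(X_n\oplus_2 OH)=\ell^{f(k)}_2(X_n)\oplus_2\ell^{f(k)}_2(OH)$, I obtain
\[
\theta\norm{(E_j x)^{f(k)}_{j=1}}_{\K\OTm\ell^{f(k)}_2(X_{OH})}\leq\norm{\big((\theta E_j x)_j,(\theta^{n+1}E_j x)_j\big)}_{\K\OTm[\ell^{f(k)}_2(X_n)\oplus_2\ell^{f(k)}_2(OH)]}.
\]
The first coordinate is bounded by $\norm{x}_{X_{n+1}}$ since $x\mapsto(\theta E_j x)_j$ is the defining complete contraction $X_{n+1}\to\ell^{f(k)}_2(X_n)$, and the second by $\theta^{n+1}\norm{x}_{OH}$ since $x\mapsto(E_j x)_j$ is completely contractive from $OH$ into $\ell^{f(k)}_2(OH)$; combining the two in the $\oplus_2$-sum yields $\norm{(x,\theta^{n+1}x)}_{\K\OTm(X_{n+1}\oplus_2 OH)}$. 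Taking the supremum over all allowable sequences, using $\norm{\cdot}_{\ell^{f(k)}_2(X_m)}\leq\norm{\cdot}_{\ell^{f(k)}_2(X_{OH})}$ for every $m$, and invoking Proposition~\ref{prop-another-def} together with the trivial bound on $\norm{x}_0$, I conclude $\norm{x}_{\K\OTm X_{OH}}\leq\norm{(x,\theta^{n+1}x)}_{\K\OTm(X_{n+1}\oplus_2 OH)}$, which is the inequality for $n+1$.

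I expect the only genuinely new point --- the rest being a transcription of Lemma~\ref{lem2-Cp} --- to be the complete contractivity of $x\mapsto(E_j x)_j\colon OH\to\ell^{f(k)}_2(OH)$, the exact analogue of the map $C_p\to C_p(C_p)$ used in the $C_p$ argument. This should follow from the perfectly Hilbertian (in particular subquadratic) structure of $OH$: since the $E_j$ are pairwise disjoint coordinate projections with $\sum_j E_j\leq I$, the identity $\norm{(E_j x)_j}^2_{S_2(\ell^{f(k)}_2(OH))}=\sum_j\norm{E_j x}^2_{S_2(OH)}\leq\norm{x}^2_{S_2(OH)}$ and its matrix-level version are precisely the computations carried out via Proposition~2.1 and Lemma~3.6 of \cite{P2} in the proof of Proposition~\ref{Prop-blockbasic}, so no substantially new estimate is required.
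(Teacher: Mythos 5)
Your proposal is correct and follows exactly the route the paper intends: the paper omits the proof of this lemma precisely because it is the verbatim transcription of Lemma~\ref{lem2-Cp} with $C_p(\cdot)$ replaced by $\ell^{f(k)}_2(\cdot)$ and $\oplus_p$ by $\oplus_2$, which is what you carry out. You also correctly isolate and justify the one ingredient that needs re-checking, namely the complete contractivity of $x\mapsto(E_jx)_j\colon OH\to\ell^{f(k)}_2(OH)$, via the $S_2$-level computation from Proposition~2.1 of \cite{P2} together with \eqref{Sp-cb-check}.
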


\begin{prop}\label{prop-C0copy}
For any $n\geq 0$ and for any infinite dimensional subspace $E \subseteq X_n$ we have that $R \OTm E$ contains an isomorphic copy of $c_0$.
\end{prop}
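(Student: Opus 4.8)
The plan is to mirror the proof of Proposition \ref{prop-c0-Cp}, arguing by induction on $n$. For the base case $n=0$ we have $X_0 = \min\ell_2$. Since $\min\ell_2$ is minimal it embeds completely isometrically into a commutative $C^*$-algebra $C(K)$ (take $K$ to be the unit ball of $\ell_2$ in the weak topology and $t_j\mapsto \widehat{t_j}$, $\widehat{t_j}(s)=\langle s,t_j\rangle$), so that $R\OTm\min\ell_2 \hookrightarrow R\OTm C(K) = C(K;R)$ isometrically. For scalars $(a_j)$ a direct computation using Bessel's inequality gives $\norm{\sum_j a_j\, e_{1j}\otimes t_j}_{R\OTm\min\ell_2} = \sup_{s\in K}\big(\sum_j |a_j|^2|\langle s,t_j\rangle|^2\big)^{1/2} = \max_j|a_j|$, so $(e_{1j}\otimes t_j)_{j\geq 1}$ is isometrically equivalent to the canonical basis of $c_0$. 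Given an infinite dimensional $E\subseteq\min\ell_2$, choosing an orthonormal basis $(f_j)$ of $E$ and running the same Bessel estimate on $\sum_j a_j e_{1j}\otimes f_j$ (the analogue of invoking homogeneity in Proposition \ref{prop-c0-Cp}) shows $R\OTm E$ contains an isometric copy of $c_0$.

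For the inductive step assume the assertion for $n$, fix an infinite dimensional $F\subseteq X_{n+1}$ and $\epsilon>0$. The dichotomy is as in Proposition \ref{prop-c0-Cp}: either there is an infinite dimensional $F'\subseteq F$ for which the formal identity $R\OTm F' \to R\OTm X_n|_{F'}$ is an isomorphism, in which case the induction hypothesis applied to $F'\subseteq X_n$ already produces a copy of $c_0$ inside $R\OTm F'\subseteq R\OTm F$; or no such tail is isomorphic, and we build a normalized, disjointly supported sequence $(x_i)_{i\geq 1}\subseteq R\OTm F$ with $\norm{x_i}_{R\OTm X_{n+1}}=1$ and $\norm{\sum_{i=1}^M x_i}_{R\OTm X_{n+1}}\leq 1+\epsilon$ for every $M$.

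To construct the sequence, suppose $x_1,\dots,x_m$ are disjointly and finitely supported, normalized in $R\OTm X_{n+1}$, with $\norm{\sum_{i=1}^m x_i}_{R\OTm X_{n+1}}\leq 1+\epsilon\sum_{i=1}^{m-1}2^{-i}$. Pick $N$ with $N\geq\text{supp}\,x_i$ for all $i\leq m$, set $Y_N=\overline{\text{span}}\{t_i\}_{i\geq N+1}$ and $\alpha=\max\{1,\theta\sqrt{f(N)}\}$. If $R\OTm(F\cap Y_N)\to R\OTm X_n|_{F\cap Y_N}$ is an isomorphism we land in the first alternative; otherwise this formal identity, which is always a contraction since $\norm{\cdot}_n\leq\norm{\cdot}_{n+1}$ by Proposition \ref{prop-Hilbertian-OH}, fails to be bounded below, so we may choose a finitely supported $x_{m+1}\in R\OTm(F\cap Y_N)$ with $\text{supp}\,x_{m+1}\subseteq\{N+1,N+2,\dots\}$, $\norm{x_{m+1}}_{R\OTm X_{n+1}}=1$ and $\norm{x_{m+1}}_{R\OTm X_n}<\frac{\epsilon}{2^m\alpha}$. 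Applying the operator-valued form of Lemma \ref{lem1} with $y=\sum_{i=1}^m x_i$ and $z=x_{m+1}$ gives $\norm{\sum_{i=1}^{m+1}x_i}_{R\OTm X_{n+1}}\leq\max\{(1+\epsilon\sum_{i=1}^{m-1}2^{-i})+\alpha\cdot\frac{\epsilon}{2^m\alpha},\,1\}\leq 1+\epsilon\sum_{i=1}^{m}2^{-i}$, which continues the construction. Since the $(x_i)$ are disjoint blocks of the $1$-completely unconditional canonical basis $\{t_i\}$, they form a $1$-unconditional basic sequence in $R\OTm X_{n+1}$; combined with the uniform partial-sum bound this forces $\norm{\sum_i a_i x_i}_{R\OTm X_{n+1}}$ to be equivalent to $\max_i|a_i|$, so $(x_i)$ spans a copy of $c_0$ in $R\OTm F$, completing the induction.

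The main obstacle is the inductive step, and within it the correct use of Lemma \ref{lem1}: although it is recorded for scalar elements, what the argument actually needs is its $R$-valued (indeed $\K$-valued) analogue, so one must observe that its proof carries the $R$-coefficients through the $\K\OTm$ slot verbatim, and then check that the bookkeeping with the constant $\alpha=\max\{1,\theta\sqrt{f(N)}\}$ closes up so the partial sums stay below $1+\epsilon\sum_{i}2^{-i}<1+\epsilon$. The other delicate point is the extraction step, namely converting the failure of $R\OTm(F\cap Y_N)\to R\OTm X_n|_{F\cap Y_N}$ to be an isomorphism into a genuinely tail-supported vector that is normalized in $X_{n+1}$ yet has arbitrarily small $X_n$-norm; this is precisely where the gap between consecutive norms $\norm{\cdot}_n$ and $\norm{\cdot}_{n+1}$ is exploited.
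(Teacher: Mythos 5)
Your proof is correct and follows essentially the same route as the paper, which leaves this proposition unproved as a direct adaptation of Proposition \ref{prop-c0-Cp}: the same dichotomy, the same extraction of a tail-supported vector that is normalized in $X_{n+1}$ but small in $X_n$, and the same use of Lemma \ref{lem1} with $\alpha=\max\{1,\theta\sqrt{f(N)}\}$. Your explicit Bessel computation for the base case $X_0=\min\ell_2$ and the closing unconditionality argument are just spelled-out versions of what the paper handles by homogeneity and by reference to the standard $c_0$ criterion.
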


\begin{thm}\label{thm-main2}
$X_{OH}$ is not completely isomorphic to $OH$.
\end{thm}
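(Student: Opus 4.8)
The plan is to mirror the proof of Theorem \ref{thm-main2-Cp} almost verbatim, replacing every $C_p$-structure with the corresponding $OH$-structure. First I would argue by contradiction: suppose $X_{OH}$ is $C$-completely isomorphic to $OH$ for some $C>0$. Since $OH$ is homogeneous, the isomorphism forces $X_{OH}$ to be $(C+\eps)$-homogeneous for every $\eps>0$, and then repeating the proof of Proposition 10.1 in \cite{P3} applied to $X_{OH}$ and $OH$ yields $\norm{I}_{cb}\leq (C+\eps)^3$ for the formal identity $I : X_{OH} \rightarrow OH$. This is the single quantitative handle that converts the abstract isomorphism hypothesis into a uniform bound usable on every finite stage $X_n$.

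Next I would feed this bound into Lemma \ref{lem2} together with the identity $S_2(X_n \oplus_2 OH) = S_2(X_n) \oplus_2 S_2(OH)$ (the $p=2$ analogue of \eqref{Sp-sum}) to obtain, for every $x \in \K_0 \otimes c_{00}$,
\begin{align*}
(C+\eps)^{-6}\norm{x}^2_{S_2(OH)} &\leq \norm{x}^2_{S_2(X_{OH})} \leq \norm{(x,\theta^n x)}^2_{S_2(X_n \oplus_2 OH)}\\
&= \norm{x}^2_{S_2(X_n)} + \theta^{2n}\norm{x}^2_{S_2(OH)}.
\end{align*}
Choosing $n$ so large that $\theta^{2n} < \tfrac{1}{2}(C+\eps)^{-6}$ absorbs the second term, which gives $\tfrac{1}{2}(C+\eps)^{-6}\norm{x}^2_{S_2(OH)} \leq \norm{x}^2_{S_2(X_n)}$ and hence, combined with the upper bound $\norm{x}_{S_2(X_n)}\leq \norm{x}_{S_2(OH)}$ from Proposition \ref{prop-Hilbertian-OH}, a uniform two-sided estimate $d_{cb}(X_n, OH) \leq \sqrt{2}\,(C+\eps)^3 < \infty$ at this fixed finite stage $n$.

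Finally I would derive the contradiction from this finiteness of $d_{cb}(X_n,OH)$. The key structural fact is that $R \OTm OH$ (equivalently $CB(C, OH)$) embeds isometrically into a reflexive Banach space with a basis, so $R \OTm OH$ contains no isomorphic copy of $c_0$; since $d_{cb}(X_n,OH)<\infty$ makes $R\OTm X_n$ linearly isomorphic to $R\OTm OH$, the space $R\OTm X_n$ cannot contain $c_0$ either. This flatly contradicts Proposition \ref{prop-C0copy}, which says $R\OTm E$ contains an isomorphic copy of $c_0$ for every infinite dimensional $E \subseteq X_n$, taking $E = X_n$ itself.

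The step I expect to carry the real weight is identifying the correct reflexive overspace in the last paragraph: in the $C_p$ case one used $C_p \OTm C_p \subseteq CB(R_p,C_p)\cong S_{2p/(2-p)}$, whose reflexivity hinged on $2p/(2-p)$ being finite for $p<2$. For $p=2$ that exponent blows up, so the analogous identification $R\OTm OH \cong CB(C,OH)$ must be shown to land in a genuinely reflexive Schatten-type class by a separate computation rather than by formally substituting $p=2$; verifying that this space still excludes $c_0$ is the one place where the $OH$ argument cannot simply quote the $1\leq p<2$ proof and is the main obstacle.
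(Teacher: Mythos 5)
Your overall strategy is exactly the paper's: Theorem \ref{thm-main2} is proved by running the argument of Theorem \ref{thm-main2-Cp} verbatim with $OH$ in place of $C_p$ and $R\OTm(\cdot)$ in place of $C_p\OTm(\cdot)$, and your first two paragraphs carry out that reduction correctly (the homogeneity bound $\norm{I}_{cb}\leq (C+\eps)^3$, the absorption of $\theta^{2n}\norm{x}^2_{S_2(OH)}$ via Lemma \ref{lem2}, and the resulting bound $d_{cb}(X_n,OH)\leq \sqrt{2}(C+\eps)^3$). The one genuine gap is the step you yourself flag at the end: you assert that $R\OTm OH$ sits inside a reflexive Banach space with a basis but defer the verification, and that verification is the \emph{entire} content of the paper's proof of this theorem --- everything else really is a formal substitution. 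Since the exponent $\frac{2p}{2-p}$ used for $p<2$ degenerates as $p\to 2$, you are right that a separate computation is unavoidable; but as written your argument is incomplete precisely at its load-bearing point.

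The missing computation is short. Because the minimal and Haagerup tensor products coincide when the right-hand factor is $R$, one has $R\OTm OH = OH\otimes_h R$, and by interpolation of the Haagerup tensor product,
$$R\OTm OH = [R,C]_{\frac{1}{2}}\otimes_h R = [R\otimes_h R,\; C\otimes_h R]_{\frac{1}{2}}.$$
As Banach spaces, $R\otimes_h R$ and $C\otimes_h R$ are isometric to $S_2$ and $S_{\infty}$ respectively, so $R\OTm OH$ is isometric to $[S_2,S_{\infty}]_{\frac{1}{2}} = S_4$, a reflexive Banach space with a basis, hence containing no isomorphic copy of $c_0$. With this supplied, your contradiction with Proposition \ref{prop-C0copy} (applied to $E=X_n$, using that $d_{cb}(X_n,OH)<\infty$ transfers the absence of $c_0$ from $R\OTm OH$ to $R\OTm X_n$) goes through, and the proof is complete.
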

\begin{proof}
The only different point from the proof of Theorem \ref{thm-main2-Cp} is to observe that
$R\OTm OH$ does not contain any isomorphic copy of $c_0$. Indeed, we have
\begin{align*}
R\OTm OH & = OH \otimes_h R = [R,C]_{\frac{1}{2}} \otimes_h R\\ & = [R\otimes_h R,C\otimes_h R]_{\frac{1}{2}}.
\end{align*}
Since $R\otimes_h R$ and $C\otimes_h R$ are isometric to $S_2$ and $S_{\infty}$, respectively, $R\OTm OH$ is isometric to $S_4$.
Thus, $R\OTm OH$ is a reflexive Banach space with a basis, so that it does not contain a isomorphic copy of $c_0$.
\end{proof}

\begin{rem}{\rm
Instead of $X_0 = \min \ell_2$ we can use $R+C$ in the above construction for another nontrivial Hilbertian weak-$OH$ space $Y_{OH}$.
However, we do not know $X_{OH}$ and $Y_{OH}$ are completely isomorphic or not at the time of this writing.
}
\end{rem}

\begin{rem}{\rm
By a similar procedure we can construct a non-Hilbertian weak-$H$ space $T_{OH}$. The construction is as follows.

We will define a sequence of norms on $\K_0 \otimes c_{00}$ again. Consider a fixed constant $0<\theta <1$. For $x \in \K_0 \otimes
c_{00}$ we define $$\norm{x}'_0 = \norm{x}_{\K \otimes_{\min} c_0}$$ and for $n\geq 1$ we define
$$\norm{x}'_{n+1} = \max\Big\{\norm{x}'_n, \theta \sup \norm{(E_j x)^{f(k)}_{j=1}}_{\K \otimes_{\min} \ell^{f(k)}_2 (T_n)}\Big\},$$
where the inner supremum runs over all ``allowable" sequence $\{E_j\}^{f(k)}_{j=1} \subseteq \mathbb{N}$.
We denote the completion of $(c_{00},\norm{\cdot}'_n)$ by $T_{n+1}$, and $T_{n+1}$ inherits
the operator space structure from $T_n \oplus_{\infty} \ell_{\infty}(I; \{\ell^{f(k)}_2(T_n)\})$,
where $I$ is the collection of all allowable sequences.

Then by a similar argument as in Proposition \ref{prop-Hilbertian-OH} we can show that
$$\norm{x}_{\K \otimes_{\min}c_0} \leq \norm{x}'_n \leq \norm{x}_{\K \otimes_{\min} OH}$$
for all $n \geq 0$ and $(\norm{x}'_n)_{n\geq 0}$ is increasing.
Thus, $$\norm{x}' = \lim_{n \rightarrow \infty}\norm{x}'_n$$ converges for all $x \in \K_0 \otimes c_{00}$,
so that $T_{OH}$, the completion of $(c_{00},\norm{\cdot}')$ inherits the operator space structure from $\ell_{\infty}(T_n)$.

If we look at the underlying Banach space of $T_{OH}$, then it is nothing but a variant of modified 2-convexification of Tsirelson's space
in (2) of Notes and Remarks in X.e (p.117 of \cite{CS}). The only difference is the fact that we replaced $f(k) = k$ into $f(k) = (4k^3)^k$,
and it is well-known that our $T_{OH}$ is isomorphic to (as a Banach space) the 2-convexified Tsirelson's space
(see X.e. and Appendix b. in \cite{CS}), which is not isomorphic to any Hilbert space.

By a similar argument we can show $T_{OH}$ is a non-Hilbertian example of weak-$OH$ space.

}
\end{rem}

\bibliographystyle{amsplain}

\begin{thebibliography}{1}

\bibitem{CS}
P. G. Casazza; T. J. Shura, Tsirelson's space. Lecture Notes in Mathematics, 1363. Springer-Verlag, Berlin, 1989.

\bibitem{DJT}
J. Diestel; H. Jarchow; A. Tonge, Absolutely summing operators. Cambridge Studies in Advanced Mathematics, 43.
Cambridge University Press, Cambridge, 1995.

\bibitem{ER}
E. G. Effros; Z. J. Ruan; Operator spaces. London Mathematical Society Monographs. New Series, 23.
The Clarendon Press, Oxford University Press, New York, 2000.

\bibitem{L1}
H. H. Lee, Type and cotype of operator spaces, preprint, 2006.

\bibitem{L2}
H. H. Lee, Weak type $(2,H)$ and weak cotype $(2,H)$ of operator spaces, preprint, 2006.

\bibitem{P0}
G. Pisier, Weak Hilbert spaces. Proc. London Math. Soc. (3) \textbf{56}(1988), no. 3, 547-579.

\bibitem{P1}
G. Pisier, The volume of convex bodies and Banach space geometry. Cambridge Tracts in Mathematics, 94. Cambridge University Press, Cambridge, 1989.

\bibitem{P1.5}
G. Pisier, The operator Hilbert space ${\rm OH}$, complex interpolation and tensor norms. Mem. Amer. Math. Soc. 122 (1996), no. 585.

\bibitem{P2}
G. Pisier, Non-commutative vector valued $L_p$-spaces and completely $p$-summing maps. Ast\'{e}risque(Soc. Math. France) \textbf{247}(1998), 1-111.

\bibitem{P3}
G. Pisier, Introduction to operator space theory. London Mathematical Society Lecture Note Series, 294. Cambridge University Press, Cambridge, 2003.

\bibitem{TJ}
N. Tomczak-Jaegermann, Banach-Mazur distances and finite-dimensional operator ideals.
Pitman Monographs and Surveys in Pure and Applied Mathematics, 38. Longman Scientific \& Technical, 1989.

\end{thebibliography}
\providecommand{\bysame}{\leavevmode\hbox
to3em{\hrulefill}\thinspace}

\end{document}